\renewcommand{\b}{\backslash}
\renewcommand{\to}{\longrightarrow}
\newcommand{\bnu}{{\boldsymbol{\nu}}}
\newcommand{\bgamma}{{\boldsymbol{\gamma}}}
\newcommand{\HH}{\mathbb H_{\mathbf e}}
\theoremstyle{definition}
\theoremstyle{plain}
\newtheorem{theorem}{Theorem}[section]
\newtheorem{lemma}{Lemma}[section]
\title{Evolution of a semidiscrete system modeling the scattering of acoustic waves by a piezoelectric solid}
\author{Thomas S. Brown\footnote{Department of Mathematical Sciences, University of Delaware. {\tt tsbrown@udel.edu}}, 
		Tonatiuh S\'{a}nchez-Vizuet\footnote{Courant Institute of Mathematical Sciences, 
		New York University. {\tt tonatiuh@cims.nyu.edu}}, 
		Francisco-Javier Sayas\footnote{Department of Mathematical Sciences, University of Delaware. {\tt fjsayas@udel.edu}}}
\date{\today}
\begin{document}
\maketitle

\begin{abstract}
We consider a model problem of the scattering of linear acoustic waves in free homogeneous space by an elastic solid. The stress tensor in the solid combines the effect of a linear dependence of strains with the influence of an existing electric field. The system is closed using Gauss's law for the associated electric displacement. Well-posedness of the system is studied by its reformulation as a first order in space and time differential system with help of an elliptic lifting operator. We then proceed to studying a semidiscrete formulation, corresponding to an abstract Finite Element discretization in the electric and elastic fields, combined with an abstract Boundary Element approximation of a retarded potential representation of the acoustic field. The results obtained with this approach improve estimates obtained with Laplace domain techniques. While numerical experiments illustrating convergence of a fully discrete version of this problem had already been published, we demonstrate some properties of the full model with some simulations for the two dimensional case.\\
{\bf AMS Subject Classification.} 65J08, 65M38, 65M60, 65R20.\\
{\bf Keywords.} Piezoelectricity, coupling of Finite and Boundary elements, retarded potentials, wave-structure interaction, time-domain boundary integral equations, groups of isometries.
\end{abstract}

\section{Introduction}

With a wide range of applications, such as the design of active and passive materials for noise and vibration control and the construction of sensors for non-destructive ultra-sonic testing, the study of the interaction between acoustic waves and solids with piezoelectric properties has been of great interest to researchers in mathematics, physics, and engineering in recent years.  The study of the mechanics of the piezoelectric solid date back to the late 19th century and some examples of work done on these problems can be found in \cite{AkNa2002, Cimatti2004, TaCh2013} to name just a few.  For our purposes, we will use the model of \cite{DeLaOh2009}, which also presents the variational form of the problem. The mathematical justification for the use of this quasi-static approximation in the solid where the electric potential satisfies a time-independent equation has been tackled by \cite{ImJo2012}.  

Introducing an acoustic wave which scatters off of the solid results in the wave-structure interaction problem which is the subject of this article. For the sake of the analysis we will use a first order in space and time formulation as in \cite{HaQiSaSa2015}, whereas for the numerical experiments we will use a surface integral potential representation for the scattered acoustic field as seen in \cite{HsSaWe2015, Sayas2016, ChNa2015}.  This treatment of the acoustics leads to a boundary element formulation for those unknowns and we will use finite elements for the semidiscretization of the piezoelectric solid similar to \cite{BiMa1991}, where the formulation considered is for a purely elastic solid.  While we will not use mixed methods, we would like to remark that it has been shown by \cite{FlKaTrWo2010} that their use for the treatment of the solid results in an equivalent problem.   

Let us now introduce the basic idea of the model we are working on, using some informal notation. (A detailed rigorous description of the model equations is given in Section \ref{ContSect}.) We consider a solid occupying a bounded region of the space $\Omega_-$ and surrounded by an irrotational fluid in its unbounded exterior $\Omega_+$. An acoustic incident wave in free space hits the solid at positive time. The acoustic field caused by scattering and by the effects of wave propagation through the obstacle will be represented by the scattered acoustic potential $u$. As the incident wave hits the obstacle, an elastic wave is triggered in the solid. The piezoelectric behavior is expressed in two ways: stress is the combination of an instantaneous linear operator acting on strain (Hooke's Law) and the effect of electric fields in the solid; Gauss's Law is then imposed for the electric displacement, which combines the electric field  and the elastic strain $\varepsilon$. The electric field will be expressed through an electric potential $\psi$ and the elastic effects will be described by the displacement field $\mathbf u$. Formally we have three equations, namely, a scalar wave equation in the unbounded region, and a vector wave equation as well as an elliptic equation in the bounded domain:
\begin{alignat*}{6}
u_{tt}=\Delta u & \qquad & \Omega_+\times [0,\infty),\\
\rho\mathbf u_{tt}=\mathrm{div}\, (\mathcal C\varepsilon+\mathbf e\nabla\psi) & & \Omega_-\times [0,\infty),\\
\mathrm{div}(\mathbf e^\top \varepsilon-\kappa\nabla \psi)=0 & & \Omega_-\times [0,\infty).
\end{alignat*}
The coupling of the elastic and the electrostatic fields happens through the piezoelectric tensor $\mathbf e$. (When $\mathbf e=0$ the model will reduce to wave-structure interaction.) The coupling of acoustic and piezoelectric dynamics takes place through two transmission conditions involving the normal components of the elastic stress $\boldsymbol\sigma\boldsymbol\nu$ and the acoustic pressure $\rho_f\partial_\nu\dot u$ as well as the normal components of the acoustic and elastic velocities ($\partial_\nu u$ and  $\dot{\mathbf u}\cdot\boldsymbol\nu$ respectively) at the interface. The goal of Section \ref{ContWP}  is the mathematical analysis of this model, which is accomplished by recasting the system of PDE's into a first order system in the spirit of \cite{HaQiSaSa2015}
\begin{equation}\label{eq:0}
\dot U(t) =A_\star U(t)+F(t), \qquad B U(t)=\xi(t), \qquad U(0)=0,
\end{equation}
for a certain operator $A_\star$ that involves a first order in space differential operator and the inverse of an elliptic operator, and boundary operator $B$ that accounts for the conditions at the interface. The unknowns collected in $U$ are related to acoustic pressure, velocity, displacement, purely elastic stress and electric field, while right-hand sides correspond to the influence of the incident wave in the system.

In the next step (Section  \ref{Semidiscrete}) we rewrite the system using a variational formulation for the interior problems and a retarded potential representation for the exterior fields. The interior equations are then approximated in the space variables using a generic Galerkin (FEM) discretization, while the boundary unknowns are approximated with an independent Galerkin (BEM) scheme. We then follow the `template' of the continuous problem to describe the semidiscrete problem in the form \eqref{eq:0}, by redefining all the operators, and to prove discrete well-posedness (Section \ref{SDWP}). The technique is reminiscent of \cite{HaSa2016b} (coupling of the same exterior problem with a scalar wave equation in the interior domain), although we need to overcome several difficulties, arising from the fact that we have an elliptic equation coupled with the system with possible non-homogeneous Dirichlet boundary conditions, as well as from the substitution of a simple scalar equation in the interior domain by a coupled system. While \cite{SaSa2016, HsSaSa2016, HsSaWe2015} had already dealt with different versions of transient wave-structure interaction problems, using Laplace transform techniques based on \cite{BaHa1986, Lubich1994, LaSa2009} , the current approach has several advantages: (a) it presents all situations in a unified form; (b) it provides sharper estimates (lower regularity required and bounds that do not grow with time); (c) it reveals the underlying stability of the semidiscrete system, as reflected by the fact that the solution is the convolution of the data with a group of isometries in a certain Hilbert space. We next show (Section \ref{EE}) that a slight variation of the problem describes the evolution of the error due to semidiscretization.  In Section \ref{AddIss} we include some easy extensions and compare with existing results. Finally, in Section \ref{NumExp}, we illustrate the behavior of the system with some numerical experiments performed by using a multistep Convolution Quadrature \cite{Lubich1994, BaSa2012, HaSa2016} technique applied to the semidiscrete problem. We note that the papers \cite{HsSaSa2016, SaSa2016} already contained numerical experiments on this problem (with and without the piezoelectric coupling) aimed at visualizing the convergence properties of the fully discrete problem. The goal of the numerical simulations here is the illustration of the behavior of solutions to the coupled system.

Instead of presenting the work as a piece of numerical analysis (model equations, discretization, error estimates, numerical experiments), we emphasize the evolutionary equation structure of the three associated problems (continuous problem, semidiscrete problem, error equations) and display all results in full generality, giving conditions on the approximation spaces (there are very few prerequisites in them). Using ideas that go back to \cite{LaSa2009}, we take advantage of the fact that Galerkin semidiscretization of boundary integral equations can be described using exotic transmission conditions (with the exterior acoustic fields partially invading the interior domain) reflecting Galerkin orthogonality and the demand that the unknowns are in discrete spaces.

\textbf{Remark.} The symbol $\lesssim$ will be used to avoid repeated occurrences of $C$, denoting a constant arising from different types of inequalities and whose value is not relevant for the argument.  Whenever discretization is taken into account (this will be made clear by the appearance of an $h$ index, not necessarily related to a discretization mesh) and time is a variable, we will assume that $a \lesssim b$ means that $ a \leq C b$ where $C$ is independent of $h$ and $t$.  Independence of $h$ will mean specifically that the constant does not depend on the particular choice of discrete spaces.    

\section{The continuous problem} \label{ContSect}

\paragraph{Sobolev space preliminaries.}
On an open set $\mathcal O$ we consider the space $L^2(\mathcal O)$ and its vector valued counterpart $\mathbf L^2(\mathcal O):=L^2(\mathcal O)^d$. The symbols $(\cdot,\cdot)_{\mathcal O}$ and $\|\cdot\|_{\mathcal O}$ will be indistinctly used for the inner product and norm of scalar, vector, or matrix valued functions with components in $L^2(\mathcal O)$. In the Sobolev space $H^1(\mathcal O)$ we consider the standard norm $\|u\|_{1,\mathcal O}^2:=\|u\|_{\mathcal O}^2+\|\nabla u \|_{\mathcal O}^2$. We will write $\mathbf H^1(\mathcal O):=H^1(\mathcal O)^d$. Finally, in 
\[
\mathbf{H}(\mathrm{div}, \mathcal{O}):= \{ \mathbf{v} \in \mathbf{L}^2(\mathcal{O}) : \nabla \cdot \mathbf{v} \in L^2(\mathcal{O})\}
\]
we consider the norm $\|\mathbf v\|_{\mathrm{div},\mathcal O}^2:=\|\mathbf v\|_{\mathcal O}^2+\|\nabla\cdot\mathbf v\|_{\mathcal O}^2$. For matrix-valued functions, we consider the spaces
\begin{alignat*}{3}
\mathrm L^2_{\mathrm{sym}}(\mathcal O) & :=\{ \mathrm A \in L^2(\mathcal O)^{d\times d}\,:\, \mathrm A^\top=\mathrm A\quad a.e.\},\\
\mathrm H_{\mathrm{sym}}(\mathrm{div},\mathcal O)
	&:=\{\mathrm A\in \mathrm L^2_{\mathrm{sym}}(\mathcal O) \,:\, \mathrm{div}\,\mathrm A \in \mathbf L^2(\mathcal O)\},
\end{alignat*}
where the divergence operator is applied to the rows of a matrix-valued function, outputting a vector-valued function. All vectors will be taken to be column vectors. 

The geometric setting for this article consists of a bounded
open domain $\Omega_- \subset \mathbb{R}^d$ with Lipschitz boundary $\Gamma$ and exterior $\Omega_+ := \mathbb{R}^d \b \overline{\Omega_-}$.
There is no need for $\Gamma$ to be connected, i.e., we admit $\Omega_-$ to contain cavities.
The setting can be extended to $\Omega_-$ being the finite union of Lipschitz domains with non-intersecting closures. 

The Sobolev spaces $H^{1/2}(\Gamma)$ and the surjective trace operators $\gamma^\pm : H^1(\Omega_\pm)\to H^{1/2}(\Gamma)$ are defined as usual
\cite{AdFo2003, McLean2000}. We will denote $H^{-1/2}(\Gamma)$ to the dual space of $H^{1/2}(\Gamma)$ and $\langle\cdot,\cdot\rangle$ will be used to represent the duality products of $H^{-1/2}(\Gamma)\times H^{1/2}(\Gamma)$ as well as that of the product spaces $\mathbf H^{\pm 1/2}(\Gamma):=H^{\pm 1/2}(\Gamma)^d$. The unit normal vector field on $\Gamma$ will point from $\Omega_-$ to $\Omega_+$ and will be denoted $\bnu$. The exterior-interior weak normal component operators $\gamma^\pm_\bnu:\mathbf H(\mathrm{div},\Omega_\pm)\to H^{-1/2}(\Gamma)$ are given by the standard definitions
\[
\langle \gamma^\pm_\bnu \mathbf v,\gamma^\pm w\rangle
	:= \mp(\nabla\cdot\mathbf v,w)_{\Omega_\pm}\mp (\mathbf v,\nabla w)_{\Omega_\pm}
		\quad\forall w\in H^1(\Omega_\pm).
\]
Both of them are surjective. We can also define $\bgamma_{\bnu}^\pm : \mathrm{H}_{\mathrm{sym}}(\mathrm{div}, \Omega_\pm) \to \mathbf{H}^{-1/2}(\Gamma)$,  given by
\[
\langle \bgamma_{\bnu}^\pm \mathrm{S}, \gamma^\pm \mathbf{w} \rangle 
	:= \mp (\mathrm{S}, \varepsilon(\mathbf{w}))_{\Omega_\pm} 
	 	\mp  (\mathrm{div} \; \mathrm{S}, \mathbf{w})_{\Omega_\pm} \qquad \forall \mathbf w\in \mathbf H^1(\Omega_\pm),
\]
where $\varepsilon(\mathbf{w}) := \frac12(\nabla \mathbf{w} + (\nabla \mathbf{w})^\top)$ is the symmetric gradient.  Note that we can also define the action of the $\bgamma_{\bnu}^\pm$ above using $\nabla \mathbf{w}$ rather than $\varepsilon(\mathbf{w})$ and obtain an equivalent definition. These operators are also surjective. 

 We assume that $\Gamma$ is partitioned into two non-overlapping relatively open sets $\Gamma_N$ and $\Gamma_D$ with non-trivial $\Gamma_D$ with the intention of imposing Neumann and Dirichlet boundary conditions respectively on these two subsets of $\Gamma = \overline{\Gamma}_N \cup \overline{\Gamma}_D$. To this end we also need to introduce the appropriate Sobolev spaces in which these conditions will live.  For $u \in H^1(\Omega_-)$ we define $\gamma_D u:= \gamma u \big|_{\Gamma_D}$ and the following spaces \cite{McLean2000}:
\begin{alignat*}{4}
H^{1/2}(\Gamma_D) &:= \{ \gamma_D u: u \in H^1(\Omega_-) \}, &\qquad H^1_D(\Omega_-) &:= \{u \in H^1(\Omega_-) : \gamma_D u = 0\}, \\
\widetilde{H}^{1/2}(\Gamma_N) &:= \{ \gamma u \big|_{\Gamma_N} : u \in H^1_D(\Omega_-)\}, &\qquad H^{-1/2} (\Gamma_N) &:=  (\widetilde{H}^{1/2}(\Gamma_N))^*.
\end{alignat*}
Above and in the sequel, the notation $X^*$ will be used to denote the dual of the space $X$. The duality pairing of $H^{-1/2}(\Gamma_N)$ and $\widetilde{H}^{1/2}(\Gamma_N)$ will be denoted explicitly by $\langle \cdot, \cdot \rangle_{\Gamma_N}$.

\paragraph{Physical coefficients.}
We will consider that the unbounded domain $\Omega_+$ is occupied by a compressible, inviscid, and irrotational fluid and we will deal with acoustic equations in it. The bounded domain $\Omega_-$ describes the equilibrium state of an elastic solid with piezoelectric properties. (Note that for the first set of results, with variable coefficients on both domains, the situation of the solid and the fluid can be reversed. Once we move to boundary-field formulations and their semidiscrete BEM-FEM discretization, the elastic domain will need to be bounded and the acoustic coefficients will be expected to be constant.)

The material properties on the acoustic domain will be determined by the functions
\begin{alignat*}{4}
c &\in L^\infty (\Omega_+) &\qquad & c \geq c_0 >  0 \quad a.e.,\\
\rho_f &\in L^\infty(\Omega_+) &\qquad & \rho_f \geq \rho_{0,f} >0 \quad a.e.,
\end{alignat*}
although we will use the combined coefficients $\kappa_0:=1 / \rho_f$ and $\kappa_1:= 1/ (c^2 \rho_f)$.  From the theoretical point of view, there is no restriction in $\kappa_0$ taking values on the space of symmetric matrices, as long as $\kappa_0$ is almost everywhere uniformly positive definite. 
The mass density in the elastic domain is given by a function
\[
\rho \in L^\infty(\Omega_-) \qquad \rho \geq \rho_0 >0 \quad a.e.,
\]
and elasticity is described by means of the symmetric stiffness tensor \; $\mathcal C: \Omega_- \to \mathcal{B}(\mathbb{R}^{d \times d}, \mathbb{R}^{d \times d})$, where for all $A,B \in \mathbb{R}^{d \times d}$ and almost everywhere in $\Omega_-$ we have
\begin{alignat*}{6}
\|\mathcal C \mathrm{A} \| &\leq C_\Sigma \|\mathrm{A} \|, &\qquad & \\
\mathcal C \mathrm{A} &= 0 && \text{if } \mathrm{A} = -\mathrm{A}^\top,\\
\mathcal C \mathrm{A}:\mathrm{B} &= \mathrm{A}:\mathcal C\mathrm{B}, && \\
 C_{0, \Sigma} \|\mathrm{A}\|^2 &\leq \mathcal C \mathrm{A}: \mathrm{A} && \text{if } \mathrm{A} = \mathrm{A}^\top.
\end{alignat*}
Here $\mathrm A:\mathrm B$ denotes the Frobenius inner product between the matrices $\mathrm A$ and $\mathrm B$,  and all of the matrix norms are those induced by this inner product in $\mathbb{R}^{d \times d}$. The above properties imply that $\mathcal C\mathrm{A} \in \mathrm L^2_{\mathrm{sym}}(\Omega_-)$  for all $\mathrm{A} \in \mathrm L^2_{\mathrm{sym}}(\Omega_-)$.  In some theoretical arguments, we will briefly need to make use of $\mathcal C^{-1}$, so at this time we will define what we mean by this symbol.  Commonly referred to as the compliance tensor, we define $\mathcal C^{-1} : \Omega_- \to \mathcal B(\mathbb R^{d \times d}_{\mathrm{sym}}, \mathbb R^{d \times d}_{\mathrm{sym}})$, where we say that $\mathcal C^{-1} \mathrm A = \mathrm B$ if  
$\mathcal C \mathrm B:\mathrm M = \mathrm A : \mathrm M,$ for all $\mathrm M \in \mathbb R^{d \times d}.$

To incorporate the piezoelectric properties of the solid occupying $\Omega_-$, we need to define the piezoelectric tensor $\mathbf e : \Omega_- \to \mathcal B(\mathbb R^d, \mathbb R^{d \times d})$ and the dielectric tensor $\kappa_\psi: \Omega_- \to \mathcal B(\mathbb R^d, \mathbb R^d)$.  We will also make use of $\mathbf e^\top : \Omega_- \to \mathcal B(\mathbb R^{d\times d}, \mathbb R^d)$.  The entries of these tensors will be functions in $L^\infty (\Omega_-)$ and exhibit the following symmetries
\[
\mathbf e_{kij} = \mathbf e_{kji}, \qquad \qquad \kappa_{\psi,ij} = \kappa_{\psi,ji}.
\]
We also assume that there is a constant $d_0 >0$ such that 
\[
\kappa_\psi \mathbf d \cdot \mathbf d \geq d_0 \|\mathbf d\|^2, \quad \forall \mathbf d \in \mathbb R^d, \quad a.e.
\]
For $\mathbf d \in \mathbb R^d$ and $\mathrm M \in \mathbb R_\mathrm{sym}^{d \times d}$ we define the action of these tensors as
\[
(\mathbf e \mathbf d)_{ij} = \sum_{k=1}^d e_{kij} d_k, \qquad (\mathbf e^\top  \mathrm M)_k = \sum_{i,j=1}^d e_{kij} M_{ij}, \qquad (\kappa_\psi  \mathbf d)_i = \sum_{j=1}^d \kappa_{\psi,ij} d_j.
\]

\paragraph{A strong second order formulation.} For the moment being, we are going to consider classical solutions of a wave-structure interaction problem. We will use notation of the abstract theory of evolution equations where only the time variable is displayed and functions take values on appropriate Sobolev spaces. All differential operators will be applied in the space variables and the upper dot will denote classical differentiation with respect to the time variable. 

We look for a scalar (acoustic) field and another scalar (piezoelectric) field coupled with a vector (elastic) field
\begin{alignat*}{4}
u:[0, \infty) &\to D(\Omega_+)
	:= &&\{ w \in H^1(\Omega_+) : \kappa_0 \nabla w \in \mathbf{H}(\mathrm{div}, \Omega_+)\},\\
(\psi, \mathbf{u}):[0, \infty) &\to \mathbf{D}(\Omega_-)
	:= &&\{(\phi, \mathbf{w}) \in H^1(\Omega_-) \times \mathbf{H}^1(\Omega_-) :\\
		&&&\qquad \mathcal C\varepsilon(\mathbf{w}) + \mathbf e \nabla \phi \in 					\mathrm{H}_\mathrm{sym}(\mathrm{div}, \Omega_-),\\
		&&&\qquad \mathbf e^\top \varepsilon(\mathbf w) - \kappa_\psi \nabla \phi 				
		\in \mathbf H(\mathrm{div},\Omega_-) \},
\end{alignat*}
satisfying the wave equations
\begin{subequations} \label{eq:SecOrdForm1}
\begin{alignat}{6}
\kappa_1 \ddot{u}(t)&= \nabla \cdot \left(\kappa_0 \nabla u \right)(t)  &\qquad&
	\mbox{in $L^2(\Omega_+)$} \quad \forall t\ge 0, \label{eq:SecOrdForm1a}\\
\rho\, \ddot{\mathbf{u}}(t) &= \mathrm{div} \;\left(\mathcal C\varepsilon(\mathbf{u})(t) + \mathbf e \nabla \psi(t)\right) && 
	\mbox{in $\mathbf L^2(\Omega_-)$} \quad \forall t\ge 0,						
	\label{eq:SecOrdForm1b}
\end{alignat}
a divergence-free condition (Gauss' law for the electric displacement)
\begin{equation} \label{eq:SecOrdForm1c}
\nabla \cdot \left(\mathbf e^\top \varepsilon(\mathbf u)(t) - \kappa_\psi \nabla \psi(t) \right) = 0 \qquad \mbox{in $L^2(\Omega_-)$} \quad \forall t \geq 0, 
\end{equation}
the transmission conditions (continuity of velocity and normal stress)
\begin{alignat}{6}
\gamma_{\bnu}^+(\kappa_0 \nabla u)(t) + \beta_1(t) + \gamma^- \dot{\mathbf{u}}(t) \cdot \boldsymbol{\nu} & = 0
&\qquad&
	\mbox{in $H^{-1/2}(\Gamma)$} \quad \forall t\ge 0,
\label{eq:SecOrdForm1d}\\
(\gamma^+\dot{u}(t) + \dot{\beta}_0(t))\boldsymbol{\nu} + \bgamma_{\bnu}^- \left(\mathcal C\varepsilon(\mathbf{u})(t) + \mathbf e \nabla \psi(t)\right) & =
	\mathbf{0} && 			\mbox{in $\mathbf H^{-1/2}(\Gamma)$} \quad \forall t
	\ge 0,\label{eq:SecOrdForm1e}
\end{alignat}
the mixed boundary conditions
\begin{alignat}{6}
\gamma^- \left(\mathbf e^\top \varepsilon(\mathbf u)(t) - \kappa_\psi \nabla
	\psi(t)\right) \cdot \bnu - \eta(t) &= 0  &\qquad& \mbox{in $H^{-1/2}
	(\Gamma_N)$} &\quad &\forall t \ge 0,\label{eq:SecOrdForm1f}\\
\gamma_D \psi(t) - \mu(t) &= 0 && \mbox{in $H^{1/2}(\Gamma_D)$} &&
	\forall t \ge 0,\label{eq:SecOrdForm1g} 
\end{alignat}
and vanishing initial conditions
\begin{equation}
u(0) = 0, \quad \dot{u}(0) = 0, \quad \mathbf{u}(0) = \mathbf{0}, \quad \dot{\mathbf{u}}(0) = \mathbf{0}. \label{eq:SecOrdForm1h}
\end{equation}
\end{subequations}
In \eqref{eq:SecOrdForm1d} and \eqref{eq:SecOrdForm1e},
\[
\beta_0: [0, \infty) \to H^{1/2}(\Gamma), 
\qquad
\beta_1: [0, \infty) \to H^{-1/2}(\Gamma)
\]
are boundary data, representing the trace and normal flux of a known incident wave corresponding to the given physical parameters, whereas in \eqref{eq:SecOrdForm1f} and \eqref{eq:SecOrdForm1g}, 
\[
\eta : [0, \infty) \to H^{-1/2}(\Gamma_N), \qquad \mu: [0, \infty) \to H^{1/2}(\Gamma_D)
\]
are boundary data for the electric displacement and potential. 

\section{Well-posedness} \label{ContWP}

Before stating our result for the well posedness of the above problem, we need the following definitions. For a Banach space $X$ and $k \geq 1$, we define
\[
W^k(X) := \{ f \in \mathcal C^{k-1}([0, \infty), X) : f^{(k)} \in L^1((0,\infty),X), f^{(\ell)}(0) = 0, \quad 0 \leq \ell \leq k-1\},
\]
and
\begin{equation} \label{eq:H}
H_k(f, t| X):= \sum_{\ell = 0}^k  \int_0^t \|f^{(\ell)} (\tau)\|_X \; \mathrm d \tau.
\end{equation}
With the additional definition of 
\[
(\partial^{-1} f)(t):= \int_0^t f(\tau) \; \mathrm{d}\tau,
\]
we are ready to present the following stability bounds. 

\begin{theorem} \label{results1}
For $(\beta_1, \beta_0, \eta, \mu) \in W^1(H^{-1/2}(\Gamma)) \times W^2(H^{1/2}(\Gamma)) \times W^1(H^{-1/2}(\Gamma_N)) \times W^1(H^{1/2}(\Gamma_D))$ problem \eqref{eq:SecOrdForm1} is uniquely solvable and its solution satisfies for all $t\ge 0$
\begin{alignat*}{6}
\|u(t)\|_{1,\Omega_+} + \|\psi(t)\|_{1, \Omega_-} + \|\mathbf u(t)\|_{1, \Omega_-}\lesssim 
	&\;  H_2((\partial^{-1} \beta_1,\beta_0), t |H^{-1/2}(\Gamma)\times H^{1/2}(\Gamma))\\
	&\; + H_1((\eta,\mu), t| H^{-1/2}(\Gamma_N)\times H^{1/2}(\Gamma_D)).
\end{alignat*}
\end{theorem}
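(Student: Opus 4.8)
The plan is to follow the template \eqref{eq:0} announced in the introduction: recast the second order system \eqref{eq:SecOrdForm1} as a first order (in space and time) evolution equation $\dot U = A_\star U + F$, $BU = \xi$, $U(0)=0$ on a Hilbert space $H$ equipped with an energy inner product, and then invoke the abstract theory of strongly continuous groups. First I would introduce the velocity unknowns $v := \dot u$ and $\mathbf w := \dot{\mathbf u}$, the stress unknown $\mathrm S := \mathcal C\varepsilon(\mathbf u)$, and the acoustic flux $\mathbf p := \kappa_0 \nabla u$, so that \eqref{eq:SecOrdForm1a}--\eqref{eq:SecOrdForm1b} become the first order relations $\kappa_1 \dot v = \nabla\cdot\mathbf p$, $\dot{\mathbf p} = \kappa_0\nabla v$, $\rho\,\dot{\mathbf w} = \mathrm{div}(\mathrm S + \mathbf e\nabla\psi)$, $\dot{\mathrm S} = \mathcal C\varepsilon(\mathbf w)$. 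The elliptic law \eqref{eq:SecOrdForm1c} is not evolutionary, so I would eliminate $\psi$ by an elliptic lifting operator: for frozen strain $\varepsilon(\mathbf u) = \mathcal C^{-1}\mathrm S$ and data $(\eta,\mu)$, the boundary value problem \eqref{eq:SecOrdForm1c}, \eqref{eq:SecOrdForm1f}, \eqref{eq:SecOrdForm1g} is coercive (by positive definiteness of $\kappa_\psi$) and uniquely solvable by Lax--Milgram, defining $\psi$ as an affine function of $\mathrm S$ and of $(\eta,\mu)$. Substituting $\nabla\psi$ into the momentum equation produces the ``inverse of an elliptic operator'' appearing in $A_\star$, and the collected state $U$ gathers the acoustic pressure and velocity, the elastic velocity, the elastic stress, and the (slaved) electric field.

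The next step is to equip $H$ with the inner product realizing the physical energy (the acoustic kinetic and potential energies weighted by $\kappa_1$ and $\kappa_0^{-1}$, the elastic kinetic energy weighted by $\rho$, the elastic potential energy weighted by $\mathcal C^{-1}$, and the electrostatic contribution carried by the slaved $\psi$ through $\kappa_\psi$) and to verify the abstract hypotheses for the operator $A_\star$ obtained by imposing homogeneous boundary data, i.e. restricting the ambient differential operator $A$ to $\ker B$. For dissipativity I would integrate by parts and observe that, on $\ker B$, all boundary pairings cancel while the symmetry of $\mathcal C$, the symmetry of $\kappa_\psi$, and the transpose relation between $\mathbf e$ and $\mathbf e^\top$ make $\mathrm{Re}\,(A_\star U, U)_H = 0$; the same computation for $-A_\star$ shows $A_\star$ is skew-symmetric, so once maximality is established Stone's theorem yields a group $\{\mathcal S(t)\}$ of isometries (the source of the time-uniform constants). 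Maximality amounts to solving the resolvent problem $(I - A_\star)U = G$, which, after eliminating the algebraically determined flux and stress components, reduces to a coercive mixed variational problem for the displacement and acoustic potential (again Lax--Milgram), with the elliptic solver supplying $\psi$.

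With $A_\star$ generating a group of isometries, I would construct a bounded right inverse (lifting) $E$ of the boundary operator $B$ from the surjectivity of the trace operators $\gamma^\pm$ and of the normal component operators $\gamma^\pm_{\bnu}$, $\bgamma^\pm_{\bnu}$ recorded in the preliminaries, so that $BE = I$. Writing $U = \tilde U + E\xi$ reduces the problem to $\dot{\tilde U} = A_\star\tilde U + \tilde F$, $\tilde U(0)=0$, with $\tilde U \in \ker B$ and $\tilde F = F + A E\xi - E\dot\xi$; the variation of constants formula $\tilde U(t) = \int_0^t \mathcal S(t-\tau)\tilde F(\tau)\,\mathrm d\tau$ together with $\|\mathcal S(t-\tau)\| = 1$ gives $\|\tilde U(t)\|_H \le \int_0^t \|\tilde F(\tau)\|_H\,\mathrm d\tau$, and hence a bound on $\|U(t)\|_H$ in terms of time integrals of $F$ and of $\xi, \dot\xi, \ddot\xi$. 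Translating the $H$-norm of $U$ back into $\|u(t)\|_{1,\Omega_+} + \|\psi(t)\|_{1,\Omega_-} + \|\mathbf u(t)\|_{1,\Omega_-}$ (the $\psi$ contribution coming from the elliptic estimate) and tracking how each datum enters $F$ and $E\xi$ yields the stated bound, with the asymmetry $\beta_0\in W^2$ versus $\beta_1,\eta,\mu\in W^1$ reflecting that $\dot\beta_0$ already appears in \eqref{eq:SecOrdForm1e} while $\beta_1$ enters through an antiderivative, whence the factor $\partial^{-1}\beta_1$.

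I expect the main obstacle to be exactly this last bookkeeping combined with the elliptic constraint. Unlike a pure wave--structure interaction, here the elliptic lifting operator must be inserted consistently into both the definition of $A_\star$ and the energy inner product so that skew-symmetry and coercivity survive the coupling through $\mathbf e$; and the non-homogeneous mixed data $(\eta,\mu)$ must be lifted without disturbing the dissipativity computation. Verifying that the image of $E$ sits in the domain of $A$, that $A E\xi$ and $E\dot\xi$ are controlled by the correct number of derivatives of the data, and that every constant is genuinely independent of $t$, is where the delicate analysis lies; the hyperbolic and elliptic pieces themselves are, by comparison, routine applications of Lax--Milgram and Stone's theorem.
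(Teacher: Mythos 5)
Your overall strategy --- first order reduction, elliptic slaving of $\psi$, an energy inner product, a group of isometries, and Duhamel's formula --- is the same template the paper follows, but your choice of first order variables is not the paper's, and this difference is fatal to the theorem \emph{as stated}. You keep the velocities $v=\dot u$, $\mathbf w=\dot{\mathbf u}$ and the instantaneous fluxes $\mathbf p=\kappa_0\nabla u$, $\mathrm S=\mathcal C\varepsilon(\mathbf u)$; the paper instead keeps $u$ and $\mathbf u$ themselves and adjoins the time antiderivatives $\mathbf v=\partial^{-1}(\kappa_0\nabla u)$, $\mathrm S=\partial^{-1}(\mathcal C\varepsilon(\mathbf u))$, $\mathbf r=\partial^{-1}\nabla\psi$, so that your state is exactly the time derivative of the paper's. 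The first concrete failure concerns the electric data: writing $\nabla\psi$ as the elliptic lifting of the strain plus the data contribution $L_N\eta+L_D\mu$, your momentum equation acquires the source term $\rho^{-1}\mathrm{div}\bigl(\mathbf e(L_N\eta+L_D\mu)\bigr)$; since $L_N\eta+L_D\mu$ is merely in $\mathbf L^2(\Omega_-)$, this divergence is in general only a distribution, not an element of $\mathbf L^2(\Omega_-)$, so your forcing term $F(t)$ does not take values in $\mathbb H$ and the variation-of-constants estimate in $\mathbb H$ cannot be run. The paper avoids this precisely by promoting $\mathbf r=\partial^{-1}\nabla\psi$ to a state variable whose zeroth-order equation $\dot{\mathbf r}=L_\Omega\mathbf u+L_N\eta+L_D\mu$ receives the data undifferentiated, and by working in the space $\HH(\mathrm{div},\Omega_-)$, in which only the \emph{combination} $\mathrm{div}(\mathrm S+\mathbf e\mathbf r)$ is required to be square integrable.

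Two further defects stem from the same choice. With your variables the transmission conditions carry the data $(\beta_1,\dot\beta_0)$ rather than $(\partial^{-1}\beta_1,\beta_0)$, so the requirement $\Xi\in W^2(\mathbb M)$ of the abstract framework forces $\beta_1\in W^2$ and $\beta_0\in W^3$ --- one derivative more than the hypothesis $(\beta_1,\beta_0)\in W^1\times W^2$ --- and the resulting estimate involves $H_2((\beta_1,\dot\beta_0),t|\cdot)$, which is strictly weaker than the stated $H_2((\partial^{-1}\beta_1,\beta_0),t|\cdot)$. Moreover, your state no longer contains $u$ and $\mathbf u$, so to bound $\|u(t)\|_{1,\Omega_+}$ and $\|\mathbf u(t)\|_{1,\Omega_-}$ you must integrate the velocity bounds in time, which introduces a factor growing linearly in $t$ and destroys the time-uniform constants that are one of the main points of the result. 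All three problems are cured simultaneously by the paper's antiderivative variables; with that substitution, your remaining steps (skew-symmetry plus maximality --- the paper uses the flip operator $T$ rather than Stone's theorem --- and the lifting/Duhamel argument, which the paper delegates to its abstract \Cref{bigTheorem}) would go through essentially as you describe, with a final separate elliptic estimate recovering $\|\psi(t)\|_{1,\Omega_-}$ from $\dot{\mathrm S}$, $\eta$, and $\mu$.
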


\paragraph{On our way to the proof.}  We begin by writing \eqref{eq:SecOrdForm1} as a first order problem, defining the variables
\[
\mathbf{v}:= \partial^{-1} (\kappa_0 \nabla u), \qquad \qquad \mathrm{S}:= \partial^{-1} (\mathcal{C}\varepsilon(\mathbf{u})), \qquad \qquad \mathbf r: = \partial^{-1} \nabla \psi.
\]
The steady-state differential equation \eqref{eq:SecOrdForm1c} and the associated boundary conditions \eqref{eq:SecOrdForm1f}-\eqref{eq:SecOrdForm1g} will be treated as functions $(\mathbf u,\eta,\mu)\mapsto \nabla\psi$ in the following form: consider the operators
\[
L_\Omega : \mathbf H^1(\Omega_-) \longrightarrow \mathbf L^2(\Omega_-), \quad
L_N : H^{-1/2}(\Gamma_N) \longrightarrow \mathbf L^2(\Omega_-), \quad
L_D : H^{1/2}(\Gamma_D) \longrightarrow \mathbf L^2(\Omega_-),
\]
defined by
\begin{subequations} \label{psiProb}
\begin{equation}
L_\Omega\mathbf u+ L_N\eta+L_D\mu =\nabla \psi,
\end{equation}
where $\psi$ satisfies
\begin{alignat}{3}
& \psi \in H^1(\Omega_-),\qquad 
\gamma_D \psi = \mu,\\
& (\kappa_\psi \nabla \psi, \nabla \varphi)_{\Omega_-} = (\varepsilon (\mathbf u), \mathbf e \nabla \varphi)_{\Omega_-} - \langle \eta, \gamma \varphi\rangle_{\Gamma_N} \quad\forall \varphi \in H_D^1(\Omega_-).
\end{alignat}
\end{subequations}
The Sobolev space for the stress
\[
\HH (\mathrm{div}, \Omega_-) := \{(\mathrm S, \mathbf r) \in \mathrm L_\mathrm{sym}^2 (\Omega_-) \times \mathbf L^2(\Omega_-) : \mathrm{div}(\mathrm S + \mathbf e \mathbf r) \in \mathbf L^2(\Omega_-)\},
\] 
is endowed with its natural norm.
With all of the above, we can state the first order formulation of the problem as looking for
\begin{alignat*}{3}
u:[0,\infty) &\to H^1(\Omega_+),\\
\mathbf{v}:[0,\infty) &\to \mathbf{H}(\mathrm{div}, \Omega_+),\\
\mathbf{u}:[0,\infty) &\to \mathbf{H}^1(\Omega_-),\\
(\mathrm{S},  \mathbf r) :[0,\infty) &\to \HH(\mathrm{div},\Omega_-),
\end{alignat*}
satisfying the differential equations
\begin{subequations} \label{eq:FirstOrdForm}
\begin{alignat}{6}
\dot{u}(t) &= \kappa_1^{-1} \nabla \cdot \mathbf v(t) &\quad &\mbox{in $L^2(\Omega_+)$} &\quad &\forall t \ge 0,\\
\dot{\mathbf v}(t) &= \kappa_0 \nabla u(t) && \mbox{in $\mathbf L^2(\Omega_+)$} && \forall t \ge 0,\\
\dot{\mathbf u}(t) &= \rho^{-1} \mathrm{div}(\mathrm S(t) + \mathbf e \mathbf r(t)) && \mbox{in $\mathbf L^2(\Omega_-)$} && \forall t \ge 0,\\
\dot{\mathrm S}(t) &= \mathcal C\varepsilon(\mathbf u)(t) && \mbox{in $\mathrm L_\mathrm{sym}^2(\Omega_-)$} && \forall t \ge 0,\\
\dot{\mathbf r}(t) &= L_\Omega \mathbf u(t) + L_N \eta (t) + L_D \mu(t) && \mbox{in $\mathbf L^2(\Omega_-)$} && \forall t \ge 0,
\end{alignat}
the transmission conditions
\begin{alignat}{6}
\gamma_\bnu^+ \mathbf v(t) + \partial^{-1} \beta_1(t) + \gamma^- \mathbf u(t) \cdot \bnu &= 0 &\qquad & \mbox{in $H^{-1/2}
	(\Gamma)$} &\quad &\forall t \ge 0,\\
(\gamma^+ u(t) + \beta_0(t))\bnu + \bgamma_\bnu^-(\mathrm S(t) + \mathbf e \mathbf r(t)) &= \mathbf 0 && \mbox{in $\mathbf H^{-1/2}
	(\Gamma)$} &&\forall t \ge 0,
\end{alignat}
and homogeneous initial conditions
\begin{equation}
u(0) = 0, \quad \mathbf v (0) = \mathbf 0, \quad \mathbf u (0) = \mathbf 0, \quad \mathrm S(0) = 0, \quad \mathbf r(0) = \mathbf 0 .
\end{equation}
\end{subequations}

\paragraph{Fitting this into the abstract framework.} We now show how this problem can be fit into the framework outlined in \Cref{framework}, and distilled from \cite{HaQiSaSa2015}.  The desired spaces for this problem are
\begin{alignat*}{2}
\mathbb H &:= L^2(\Omega_+) \times \mathbf L^2(\Omega_+) \times \mathbf L^2(\Omega_-) \times (\mathrm L_\mathrm{sym}^2 (\Omega_-) \times \mathbf L^2(\Omega_-)),\\
\mathbb V &:= H^1(\Omega_+) \times \mathbf H(\mathrm{div}, \Omega_-) \times \mathbf H^1(\Omega_-) \times \HH (\mathrm{div}, \Omega_-),\\
\mathbb M &= \mathbb M_2:=  H^{-1/2} (\Gamma) \times \mathbf H^{-1/2}(\Gamma).
\end{alignat*}
For $U:= (u, \mathbf v, \mathbf u, (\mathrm S, \mathbf r)) \in \mathbb H$ we define
\[
\|U\|_{\mathbb H}^2 := (\kappa_1 u, u)_{\Omega_+} + (\kappa_0^{-1} \mathbf v, \mathbf v)_{\Omega_+} + (\rho \mathbf u, \mathbf u)_{\Omega_-} + (\mathcal C^{-1} \mathrm S, \mathrm S)_{\Omega_-} + (\kappa_\psi \mathbf r, \mathbf r)_{\Omega_-},
\]
whereas in $\mathbb V$ and $\mathbb M$ we will use the natural product norms.
The operators $A_\star:\mathbb V\to \mathbb H$ and $B:\mathbb V \to \mathbb M$ are given by
\begin{alignat}{3}
A_\star U & := (\kappa_1^{-1} \nabla \cdot \mathbf v,\; \kappa_0 \nabla u, \; \rho^{-1} \mathrm{div}(\mathrm S + \mathbf e \mathbf r), \; (\mathcal C \varepsilon(\mathbf u), L_\Omega \mathbf u)), \label{eq:A}\\
B U &:= (\gamma_\bnu^+ \mathbf v + \gamma^- \mathbf u \cdot \bnu, \; \gamma^+ u \bnu + \bgamma_\bnu^-(\mathrm S + \mathbf e \mathbf r)), 
\end{alignat}
and make the equivalence $\|A_\star U\|_{\mathbb H} + \|U\|_{\mathbb H} \approx \|U\|_{\mathbb V}$ hold. 
These operators and spaces allow us to write \eqref{eq:FirstOrdForm} in the abstract form 
\[
\dot{U}(t) = A_\star U(t) + F(t), \qquad BU(t)= \Xi (t), \qquad  U(0) = 0, 
\]
with 
\begin{equation}\label{eq:data}
 F = (0, \mathbf 0, \mathbf 0,(0, L_N \eta + L_D \mu)),
 \qquad \Xi = (0, (-\partial^{-1} \beta_1, -\beta_0 \bnu)).
\end{equation}
With the notation $A := A_\star \big|_{\mathrm{Ker} \, B}$ and $D(A): = \mathrm{Ker}\, B$, we go through the process of verifying the hypotheses of the framework. 
\begin{lemma} \label{lemmaDiss1}
For every $U \in D(A)$, we have $(AU,U)_{\mathbb H} = 0$. 
\end{lemma}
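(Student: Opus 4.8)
The plan is to expand $(A_\star U,U)_{\mathbb H}$ straight from the definition of $A_\star$ and of the weighted inner product. The first thing to notice is that each weight in $\|\cdot\|_{\mathbb H}$ is precisely the inverse of the coefficient sitting in the matching slot of $A_\star$, so the physical coefficients $\kappa_0,\kappa_1,\rho,\mathcal C$ all cancel and one is left with
\[
(A_\star U,U)_{\mathbb H}=(\nabla\cdot\mathbf v,u)_{\Omega_+}+(\nabla u,\mathbf v)_{\Omega_+}+(\mathrm{div}(\mathrm S+\mathbf e\mathbf r),\mathbf u)_{\Omega_-}+(\varepsilon(\mathbf u),\mathrm S)_{\Omega_-}+(\kappa_\psi L_\Omega\mathbf u,\mathbf r)_{\Omega_-}.
\]
The strategy is then to remove every volume term, either by integration by parts (converting it into a boundary pairing) or by pairing it off against a symmetric partner, and finally to eliminate the boundary pairings through the condition $U\in\mathrm{Ker}\,B$.

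The next step is the integration by parts. The two acoustic terms combine, via the Green identity defining $\gamma^+_\bnu$, into $-\langle\gamma^+_\bnu\mathbf v,\gamma^+u\rangle$. For the elastic block I would apply the Green identity defining $\bgamma^-_\bnu$ to the tensor $\mathrm S+\mathbf e\mathbf r$; here one must first observe that $\mathbf e\mathbf r$ is genuinely symmetric (a consequence of $\mathbf e_{kij}=\mathbf e_{kji}$), so that $\mathrm S+\mathbf e\mathbf r\in\mathrm H_{\mathrm{sym}}(\mathrm{div},\Omega_-)$ and the operator is legitimately applicable. This yields $\langle\bgamma^-_\bnu(\mathrm S+\mathbf e\mathbf r),\gamma^-\mathbf u\rangle-(\mathrm S+\mathbf e\mathbf r,\varepsilon(\mathbf u))_{\Omega_-}$, whose symmetric piece $(\mathrm S,\varepsilon(\mathbf u))_{\Omega_-}$ cancels the fourth term above, leaving a boundary contribution together with the electric cross term $-(\mathbf e\mathbf r,\varepsilon(\mathbf u))_{\Omega_-}=-(\mathbf r,\mathbf e^\top\varepsilon(\mathbf u))_{\Omega_-}$.

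The boundary contributions are then killed using $U\in D(A)=\mathrm{Ker}\,B$. The two components of $BU=0$ read $\gamma^+_\bnu\mathbf v=-\gamma^-\mathbf u\cdot\bnu$ and $\bgamma^-_\bnu(\mathrm S+\mathbf e\mathbf r)=-\gamma^+u\,\bnu$; substituting these turns the two boundary pairings into $\langle\gamma^-\mathbf u\cdot\bnu,\gamma^+u\rangle-\langle\gamma^+u\,\bnu,\gamma^-\mathbf u\rangle$, and these two quantities coincide, since both represent $\int_\Gamma(\gamma^+u)(\bnu\cdot\gamma^-\mathbf u)$ once the scalar and vector duality pairings on $\Gamma$ are identified. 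Hence the entire boundary part vanishes.

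The remaining and genuinely delicate step is the electric cross term $(\kappa_\psi L_\Omega\mathbf u,\mathbf r)_{\Omega_-}-(\mathbf r,\mathbf e^\top\varepsilon(\mathbf u))_{\Omega_-}$. Writing $L_\Omega\mathbf u=\nabla\psi_0$ with $\psi_0\in H^1_D(\Omega_-)$ the solution of \eqref{psiProb} for $\eta=\mu=0$, the defining variational identity (after rewriting $(\varepsilon(\mathbf u),\mathbf e\nabla\varphi)_{\Omega_-}=(\mathbf e^\top\varepsilon(\mathbf u),\nabla\varphi)_{\Omega_-}$) states that $\kappa_\psi\nabla\psi_0-\mathbf e^\top\varepsilon(\mathbf u)$ is $\mathbf L^2$-orthogonal to $\nabla H^1_D(\Omega_-)$, so the cross term collapses exactly when $\mathbf r$ is paired as such a gradient. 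I expect this to be the main obstacle: it forces the cancellation to hinge on the variational characterization of $L_\Omega$ together with the role of $\mathbf r$ (whose physical origin is $\mathbf r=\partial^{-1}\nabla\psi$), and verifying that the domain $D(A)$ supplies precisely the orthogonality needed to annihilate this last term is the point I would scrutinize most carefully before declaring $(AU,U)_{\mathbb H}=0$.
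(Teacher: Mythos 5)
Up to its last step your computation is exactly the paper's: the weights in the $\mathbb H$-inner product cancel against the coefficients in $A_\star$, the acoustic pair collapses via the definition of $\gamma_\bnu^+$ to $-\langle\gamma_\bnu^+\mathbf v,\gamma^+u\rangle$, the elastic Green identity applied to $\mathrm S+\mathbf e\mathbf r$ (legitimate because $\mathbf e\mathbf r$ is symmetric, as you correctly note) produces $\langle\bgamma_\bnu^-(\mathrm S+\mathbf e\mathbf r),\gamma^-\mathbf u\rangle$ minus a volume term, and the two boundary pairings cancel against each other once the two components of $BU=0$ are inserted and the scalar and vector dualities on $\Gamma$ are identified. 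The gap is in what remains. You reduce everything to the single interior term $(\kappa_\psi L_\Omega\mathbf u-\mathbf e^\top\varepsilon(\mathbf u),\mathbf r)_{\Omega_-}$ and you identify the only available tool to kill it, namely the variational identity in \eqref{psiProb}, which says that $\kappa_\psi L_\Omega\mathbf u-\mathbf e^\top\varepsilon(\mathbf u)$ is $\mathbf L^2(\Omega_-)$-orthogonal to $\nabla H^1_D(\Omega_-)$; but then you stop, declaring that whether the $\mathbf r$-component of an element of $D(A)$ is a field to which this orthogonality applies is ``the point you would scrutinize.'' That is not a proof: as written, your argument establishes only that $(AU,U)_{\mathbb H}$ equals this residual cross term, not that the term vanishes, so the lemma is left unproven.

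For comparison, the paper closes precisely this step in one stroke: it uses the defining identity of $L_\Omega$ with $\mathbf r$ playing the role of $\nabla\varphi$, i.e.\ it writes $(\kappa_\psi L_\Omega\mathbf u,\mathbf r)_{\Omega_-}=(\varepsilon(\mathbf u),\mathbf e\mathbf r)_{\Omega_-}$, so that the last two volume terms assemble into $(\varepsilon(\mathbf u),\mathrm S+\mathbf e\mathbf r)_{\Omega_-}$ \emph{before} the elastic integration by parts, and the boundary terms then cancel as above. Your instinct that this substitution is the delicate point is sound: the definition of $\HH(\mathrm{div},\Omega_-)$ constrains only $\mathrm{div}(\mathrm S+\mathbf e\mathbf r)$, and neither it nor $BU=0$ visibly forces $\mathbf r$ to lie in the closure of $\nabla H^1_D(\Omega_-)$, which is what replacing $\nabla\varphi$ by $\mathbf r$ requires (in the semidiscrete problem of Section \ref{SDWP} the paper makes the analogous property structural by restricting $\mathbf r^h$ to the gradient space $\mathbf W_h=\nabla V_h$). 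So the deficiency is not that you missed the mechanism --- you isolated it more explicitly than the paper does --- but that you raised the decisive question and left it unanswered. A complete proof must either invoke the identity $(\kappa_\psi L_\Omega\mathbf u,\mathbf r)_{\Omega_-}=(\varepsilon(\mathbf u),\mathbf e\mathbf r)_{\Omega_-}$ as the paper does, or supply the characterization of the $\mathbf r$-component of elements of $D(A)$ that renders that identity legitimate; deferring the verification is where your proposal falls short of a proof.
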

\begin{proof}
For each $U \in D(A)$ we have
\begin{alignat*}{2}
(AU, U)_{\mathbb H} &= (\nabla \cdot \mathbf v, u)_{\Omega_+} + (\nabla u, \mathbf v)_{\Omega_+} + (\mathrm{div}(\mathrm S + \mathbf e \mathbf r), \mathbf u)_{\Omega_-} + (\varepsilon(\mathbf u), \mathrm S)_{\Omega_-} + (\kappa_\psi L_\Omega \mathbf u, \mathbf r)_{\Omega_-}\\
&=  -\langle \gamma_\bnu^+ \mathbf v, \gamma^+ u \rangle + (\mathrm{div}(\mathrm S + \mathbf e \mathbf r), \mathbf u)_{\Omega_-} + (\varepsilon(\mathbf u), \mathrm S + \mathbf e \mathbf r)_{\Omega_-}\\
& = -\langle \gamma_\bnu^+ \mathbf v, \gamma^+ u \rangle + \langle \bgamma_\bnu^-(\mathrm S + \mathbf e \mathbf r), \gamma^- \mathbf u \rangle = 0,
\end{alignat*}
which proves the result.
\end{proof}

\begin{lemma} \label{lemmaT1}
The operator $T(u, \mathbf v, \mathbf u, (\mathrm S, \mathbf r)) := (u, -\mathbf v, -\mathbf u, (\mathrm S, \mathbf r))$ is an isometric involution in $\mathbb H$, bijective in $D(A)$, and satisfies $TA = -AT$.
\end{lemma}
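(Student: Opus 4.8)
The three assertions are independent, and each reduces to elementary sign-chasing once one observes that $T$ simply negates the two ``velocity-type'' slots — the $\mathbf v$ component and the $\mathbf u$ component — while fixing the acoustic potential $u$ and the stress pair $(\mathrm S,\mathbf r)$. The plan is to verify them in the order: involution and isometry, invariance of $D(A)$ (hence bijectivity), and finally the anticommutation relation.

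For the involution, $T^2=I$ holds trivially, since negating a component twice restores it. For the isometry I would appeal to the fact that $\|\cdot\|_{\mathbb H}^2$ is a sum of diagonal quadratic forms, one per component; negating $\mathbf v$ and $\mathbf u$ leaves each of $(\kappa_0^{-1}\mathbf v,\mathbf v)_{\Omega_+}$ and $(\rho\mathbf u,\mathbf u)_{\Omega_-}$ unchanged and touches no other term, so $\|TU\|_{\mathbb H}=\|U\|_{\mathbb H}$.

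For bijectivity on $D(A)=\mathrm{Ker}\,B$, since we already have $T^2=I$ on all of $\mathbb H$, it suffices to prove the single inclusion $T(D(A))\subseteq D(A)$; the reverse inclusion together with injectivity and surjectivity then follow formally by applying $T$ once more. Because $T$ only changes signs, it maps $\mathbb V$ into $\mathbb V$, so I only need $B(TU)=0$ whenever $BU=0$. I would check the two coordinates of $B$ separately: the first, $\gamma_\bnu^+\mathbf v+\gamma^-\mathbf u\cdot\bnu$, depends only on the negated slots and therefore picks up an overall minus sign under $T$, so it remains zero; the second, $\gamma^+u\,\bnu+\bgamma_\bnu^-(\mathrm S+\mathbf e\mathbf r)$, depends only on the fixed slots and is literally unchanged. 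Hence both coordinates of $B(TU)$ vanish and $TU\in D(A)$.

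The anticommutation $TA=-AT$ is the one computation with genuine content, and I would carry it out coordinate by coordinate on $U=(u,\mathbf v,\mathbf u,(\mathrm S,\mathbf r))\in D(A)$. Starting from $A_\star U=(\kappa_1^{-1}\nabla\cdot\mathbf v,\ \kappa_0\nabla u,\ \rho^{-1}\mathrm{div}(\mathrm S+\mathbf e\mathbf r),\ (\mathcal C\varepsilon(\mathbf u),L_\Omega\mathbf u))$, applying $T$ negates the second and third outputs, so that $TAU=(\kappa_1^{-1}\nabla\cdot\mathbf v,\ -\kappa_0\nabla u,\ -\rho^{-1}\mathrm{div}(\mathrm S+\mathbf e\mathbf r),\ (\mathcal C\varepsilon(\mathbf u),L_\Omega\mathbf u))$. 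On the other side, $TU$ negates $\mathbf v$ and $\mathbf u$ in the input; feeding this through $A_\star$ and using the linearity of $\nabla\cdot$, of $\varepsilon$, and — crucially — of the elliptic lifting $L_\Omega$, the first and fourth outputs flip sign while the second and third are fixed, so that $-ATU$ reproduces exactly the expression just obtained for $TAU$. The only point demanding care is that $L_\Omega$ acts linearly in $\mathbf u$, so that $L_\Omega(-\mathbf u)=-L_\Omega\mathbf u$; this is the case because $L_\Omega\mathbf u=\nabla\psi$ is the gradient of the solution of the linear variational problem \eqref{psiProb} with $\eta=\mu=0$, in which $\mathbf u$ enters only linearly on the right-hand side. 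The coupled term $\mathrm{div}(\mathrm S+\mathbf e\mathbf r)$ needs no special treatment, since $\mathrm S$ and $\mathbf r$ are untouched by $T$.
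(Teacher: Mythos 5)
Your proof is correct and is precisely the straightforward verification the paper leaves implicit (its own proof reads only ``It is straightforward''). In particular, you rightly isolate the two points of genuine content: that $T$ maps $D(A)=\mathrm{Ker}\,B$ into itself — without which $ATU$ is not even defined — and that the lifting $L_\Omega$ is linear, so that $L_\Omega(-\mathbf u)=-L_\Omega\mathbf u$ and the fourth component behaves correctly under the anticommutation check.
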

\begin{proof}It is straightforward.
\end{proof}

\begin{lemma} \label{lemmaSolve1}
Let $F = (f, \mathbf f, \mathbf g, (\mathrm G, \mathbf h)) \in \mathbb H$ and $\Xi = (\xi, \boldsymbol{\xi}) \in \mathbb M$.  Then there exists a unique $U \in \mathbb V$ such that 
\begin{equation}\label{eq:UAstarU}
U = A_\star U + F, \qquad BU = \Xi,
\end{equation}
and 
\[
\|U\|_\mathbb{V} \le C(\|F\|_{\mathbb H} + \|\Xi\|_{\mathbb M}).
\]
\end{lemma}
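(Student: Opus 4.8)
The plan is to invert $I - A_\star$ subject to $BU = \Xi$ by reducing the five coupled relations in \eqref{eq:UAstarU} to a single coercive variational problem. Reading \eqref{eq:UAstarU} componentwise, the identities $\mathbf v = \kappa_0 \nabla u + \mathbf f$, $\mathrm S = \mathcal C \varepsilon(\mathbf u) + \mathrm G$ and $\mathbf r = L_\Omega \mathbf u + \mathbf h$ express the flux unknowns in terms of $u$ and $\mathbf u$, where $L_\Omega \mathbf u = \nabla \psi$ and $\psi \in H^1_D(\Omega_-)$ solves \eqref{psiProb} with $\eta = 0$, $\mu = 0$ (well defined and bounded by Lax--Milgram applied to the $\kappa_\psi$-form, using that $\Gamma_D$ is non-trivial). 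First I would substitute these three relations into the two remaining momentum equations, so that the task becomes: find $(u,\mathbf u,\psi)$ in $\mathbb W := H^1(\Omega_+) \times \mathbf H^1(\Omega_-) \times H^1_D(\Omega_-)$ solving a reaction--diffusion equation for $u$ in $\Omega_+$, an elasticity-type equation for $\mathbf u$ in $\Omega_-$, and the defining elliptic equation for $\psi$, all tied together by the two transmission conditions in $BU = \Xi$.

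Next I would test the three equations against $(w,\mathbf w,\varphi) \in \mathbb W$ and integrate by parts via the definitions of $\gamma_\bnu^+$ and $\bgamma_\bnu^-$, moving the normal traces $\gamma_\bnu^+ \mathbf v$ and $\bgamma_\bnu^-(\mathrm S + \mathbf e \mathbf r)$ onto $\Gamma$ and replacing them through the transmission conditions. This yields a bilinear form $a(\cdot,\cdot)$ and a load $\ell(\cdot)$ on $\mathbb W$; continuity of $\ell$ follows from Cauchy--Schwarz, boundedness of the coefficients, and the trace theorem (the latter bounding the $\langle \xi, \gamma^+ w\rangle$ and $\langle \boldsymbol\xi, \gamma^- \mathbf w\rangle$ terms by $\|\Xi\|_{\mathbb M}$). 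The decisive point is coercivity: on the diagonal $(w,\mathbf w,\varphi) = (u,\mathbf u,\psi)$ the two off-diagonal transmission terms appear with opposite signs and cancel, and the piezoelectric terms $(\mathbf e \nabla \psi, \varepsilon(\mathbf u))_{\Omega_-}$ and $(\mathbf e^\top \varepsilon(\mathbf u), \nabla \psi)_{\Omega_-}$ cancel by the adjunction $(\mathbf e \mathbf d):\mathrm M = \mathbf d \cdot (\mathbf e^\top \mathrm M)$, leaving
\[
a((u,\mathbf u,\psi),(u,\mathbf u,\psi)) = (\kappa_1 u, u)_{\Omega_+} + (\kappa_0 \nabla u, \nabla u)_{\Omega_+} + (\rho \mathbf u, \mathbf u)_{\Omega_-} + (\mathcal C \varepsilon(\mathbf u), \varepsilon(\mathbf u))_{\Omega_-} + (\kappa_\psi \nabla \psi, \nabla \psi)_{\Omega_-}.
\]
Each summand is bounded below by uniform positivity of the coefficients; the first two give $\|u\|_{1,\Omega_+}^2$ directly (the zeroth-order term removes the need for a Poincaré inequality on the unbounded domain), Korn's second inequality upgrades $\|\mathbf u\|_{\Omega_-}^2 + \|\varepsilon(\mathbf u)\|_{\Omega_-}^2$ to $\|\mathbf u\|_{1,\Omega_-}^2$, and the Poincaré--Friedrichs inequality on $H^1_D(\Omega_-)$ (again using $\Gamma_D$ non-trivial) upgrades $\|\nabla \psi\|_{\Omega_-}^2$ to $\|\psi\|_{1,\Omega_-}^2$. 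Hence $a$ is coercive on $\mathbb W$, and Lax--Milgram gives a unique $(u,\mathbf u,\psi)$ with $\|(u,\mathbf u,\psi)\|_{\mathbb W} \lesssim \|F\|_{\mathbb H} + \|\Xi\|_{\mathbb M}$.

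It remains to recover $U = (u,\mathbf v, \mathbf u, (\mathrm S,\mathbf r))$ and to show $U \in \mathbb V$ with the stated estimate, which is where I expect the real work to lie. Defining $\mathbf v, \mathrm S, \mathbf r$ by the elimination formulas, I would first test the variational identity against compactly supported test functions to recover the two strong equations in $L^2$: these force $\nabla \cdot \mathbf v = \kappa_1(u - f) \in L^2(\Omega_+)$ and $\mathrm{div}(\mathrm S + \mathbf e \mathbf r) = \rho(\mathbf u - \mathbf g) \in \mathbf L^2(\Omega_-)$, which is exactly the membership $\mathbf v \in \mathbf H(\mathrm{div},\Omega_+)$ and $(\mathrm S, \mathbf r) \in \HH(\mathrm{div},\Omega_-)$ and simultaneously bounds the graph norms, giving $\|U\|_{\mathbb V} \lesssim \|F\|_{\mathbb H} + \|\Xi\|_{\mathbb M}$. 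Then, testing against general $w \in H^1(\Omega_+)$ and $\mathbf w \in \mathbf H^1(\Omega_-)$ and integrating by parts in reverse, the interior contributions cancel and what survives are precisely the boundary identities $\gamma_\bnu^+ \mathbf v + \gamma^- \mathbf u \cdot \bnu = \xi$ and $\gamma^+ u\,\bnu + \bgamma_\bnu^-(\mathrm S + \mathbf e \mathbf r) = \boldsymbol\xi$ in the dual trace spaces, i.e. $BU = \Xi$; the relation $\mathbf r = L_\Omega \mathbf u + \mathbf h$ holds by construction of $\psi$, and uniqueness follows from coercivity. The main obstacle is the bookkeeping of this recovery: confirming that each normal-trace operator acts on a function genuinely lying in the corresponding $\mathbf H(\mathrm{div})$-type space, and that the non-homogeneous transmission data are attained in $H^{-1/2}(\Gamma)$ and $\mathbf H^{-1/2}(\Gamma)$ rather than merely weakly.
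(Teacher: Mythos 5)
Your proposal is correct, and its overall skeleton --- eliminate the flux unknowns $\mathbf v,\mathrm S,\mathbf r$, solve a coercive variational problem by Lax--Milgram, then recover the strong equations by testing with compactly supported functions and the transmission conditions by trace surjectivity --- is exactly the paper's. The one genuine structural difference is how the electric potential is handled. You keep $\psi$ as a third independent unknown on $\mathbb W = H^1(\Omega_+)\times\mathbf H^1(\Omega_-)\times H^1_D(\Omega_-)$, and your coercivity rests on the skew-symmetry of the piezoelectric coupling block (the terms $(\mathbf e\nabla\psi,\varepsilon(\mathbf u))_{\Omega_-}$ and $-(\mathbf e^\top\varepsilon(\mathbf u),\nabla\psi)_{\Omega_-}$ cancelling on the diagonal), after which you need Korn for $\mathbf u$ and Poincar\'e--Friedrichs on $H^1_D(\Omega_-)$ for $\psi$. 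The paper instead eliminates $\psi$ through the solution operator $L_\Omega$ of \eqref{psiProb} and works on the two-field space $\mathbb U = H^1(\Omega_+)\times\mathbf H^1(\Omega_-)$, with the nonsymmetric term $(\mathbf e L_\Omega\mathbf u,\varepsilon(\mathbf w))_{\Omega_-}$ in the form; coercivity there uses the identity $(\mathbf e L_\Omega\mathbf u,\varepsilon(\mathbf u))_{\Omega_-}=(\kappa_\psi L_\Omega\mathbf u,L_\Omega\mathbf u)_{\Omega_-}\ge 0$, so the $\psi$-part only needs to be nonnegative, not coercive. What each buys: your three-field form is the more standard monolithic formulation, avoids putting $L_\Omega$ inside the bilinear form (so its boundedness is not needed for continuity of the form), and yields an $H^1$ bound on $\psi$ directly; the paper's two-field form matches the definition of $A_\star$ (whose fifth component is $L_\Omega\mathbf u$, not a free $\nabla\psi$), which makes the equivalence with the resolvent equation \eqref{eq:UAstarU} slightly more immediate --- in your version one must note, as you do, that your third variational equation is precisely the characterization $\nabla\psi=L_\Omega\mathbf u$, so that $\mathbf r=L_\Omega\mathbf u+\mathbf h$ holds by construction. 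Both routes deliver the same $h$-independent constant structure and the same recovery argument, so the difference is one of formulation rather than of substance.
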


\begin{proof}
Uniqueness follows from the linearity of $A_\star$ and $B$ and \Cref{lemmaDiss1}.  We introduce an equivalent variational problem to show existence of solutions to the problem. 
We consider the space $\mathbb U:= H^1(\Omega_+) \times \mathbf H^1(\Omega_-)$, as well as the bounded bilinear form $a: \mathbb{U} \times \mathbb{U} \to \mathbb{R}$ and bounded linear functional $\ell: \mathbb{U} \to \mathbb{R}$  given by
\begin{alignat*}{3}
a((u,\mathbf{u}), (w, \mathbf{w}))&:= (\kappa_1 u,w)_{\Omega_+} + (\kappa_0 \nabla u, \nabla w)_{\Omega_+} + (\rho \mathbf{u}, \mathbf{w})_{\Omega_-} + (\mathcal C \varepsilon(\mathbf{u}), \varepsilon(\mathbf{w}))_{\Omega_-}\\
& \qquad + (\mathbf e L_\Omega \mathbf u, \varepsilon(\mathbf w))_{\Omega_-} - \langle \gamma^- \mathbf{u} \cdot \bnu, \gamma^+ w \rangle + \langle  \gamma^+ u \bnu , \gamma^- \mathbf{w}\rangle,\\
\ell(w, \mathbf{w}) &:= (\kappa_1 f, w)_{\Omega_+} - (\mathbf{f}, \nabla w)_{\Omega_+} + (\rho \mathbf{g}, \mathbf{w})_{\Omega_-} - (\mathrm{G}, \varepsilon(\mathbf{w}))_{\Omega_-} - (\mathbf e \mathbf h, \varepsilon(\mathbf w))_{\Omega_-} \\
&\qquad  - \langle \xi, \gamma^+ w \rangle + \langle \boldsymbol{\xi}, \gamma^- \mathbf{w}\rangle.
\end{alignat*}
Since from the definition of the operator $L_\Omega$ in \eqref{psiProb} we have
\[
(\mathbf e L_\Omega \mathbf u,\varepsilon(\mathbf u))_{\Omega_-}
	=(\kappa_\psi L_\Omega\mathbf u, L_\Omega\mathbf u)_{\Omega_-},
\]
it follows readily that $a$ is coercive in $\mathbb U$.
Therefore, the problem 
\begin{equation}\label{eq:VP}
(u, \mathbf{u}) \in \mathbb{U}, \qquad
a((u, \mathbf{u}),(w, \mathbf{w})) = \ell((w, \mathbf{w})) \qquad \forall (w, \mathbf{w}) \in \mathbb{U},
\end{equation}
is well posed by the Lax-Milgram Lemma and we can easily bound its solution by 
\begin{equation} \label{eq:UBounds1}
\|(u, \mathbf{u})\|_{\mathbb{U}} \leq C(\|(f,\mathbf{f}, \mathbf{g}, \mathrm{G}, \mathbf h)\|_\mathbb{H} + \|(\xi, \boldsymbol{\xi})\|_\mathbb{M}).
\end{equation}
To show that the variational problem is equivalent to \eqref{eq:UAstarU}, we define the quantities
\begin{equation} \label{eq:SecQuant1}
\mathbf{v} := \kappa_0 \nabla u + \mathbf{f} \in \mathbf{L}^2(\Omega_-), \qquad \mathrm{S}:= \mathcal C \varepsilon(\mathbf{u}) + \mathrm{G} \in \mathrm{L}_\mathrm{sym}^2(\Omega_+), \qquad \mathbf r := L_\Omega \mathbf u + \mathbf h,
\end{equation}
and substitute them into the variational problem: 
\begin{alignat}{6} 
\label{eq:VertDot1}
\nonumber
(\kappa_1 u, w)_{\Omega_+} + (\mathbf{v}, \nabla w)_{\Omega_+} + (\rho \mathbf{u}, \mathbf{w})_{\Omega_-} + (\mathrm{S} + \mathbf e \mathbf r, \varepsilon(\mathbf{w}))_{\Omega_-} & \\
- \langle \gamma^- \mathbf{u} \cdot \bnu - \xi, \gamma^+ w\rangle + \langle \gamma^+ u \bnu - \boldsymbol{\xi}, \gamma^- \mathbf{w} \rangle 
& = (\kappa_1 f,w )_{\Omega_+} + (\rho \mathbf{g}, \mathbf{w})_{\Omega_-} \\
\nonumber
& \qquad\qquad   \forall (w, \mathbf{w}) \in \mathbb{U}. 
\end{alignat}
Testing this equation with $(w, \mathbf{w}) \in \mathcal{D}(\Omega_+) \times \mathcal{D}(\Omega_-)^d$ (here $\mathcal D(\mathcal O)$ is the space of infinitely differentiable functions with compact support in $\mathcal O$) and applying elementary theory of distributions \cite{Schwartz1966}, we can show that 
\begin{equation} \label{eq:PrimQuant1}
u = \kappa_1^{-1} \nabla \cdot \mathbf{v} + f, \qquad \qquad \mathbf{u} = \rho^{-1} \mathrm{div} \left(\mathrm{S} + \mathbf e \mathbf r \right) + \mathbf{g},
\end{equation}
hence $\mathbf{v} \in \mathbf{H}(\mathrm{div}, \Omega_+)$ and $(\mathrm{S},\mathbf r) \in \HH(\mathrm{div}, \Omega_-)$.  Substituting \eqref{eq:PrimQuant1} into \eqref{eq:VertDot1}, we obtain 
\begin{alignat*}{6}
(\nabla \cdot \mathbf{v}, w)_{\Omega_+} + (\mathbf{v}, \nabla w)_{\Omega_+} + (\mathrm{div} \left( \mathrm{S} + \mathbf e \mathbf r\right), \mathbf{w})_{\Omega_-} + (\mathrm{S} + \mathbf e \mathbf r, \varepsilon(\mathbf{w}))_{\Omega_-} & \\
- \langle \gamma^- \mathbf{u} \cdot \bnu - \xi, \gamma^+ w\rangle + \langle \gamma^+ u \bnu - \boldsymbol{\xi}, \gamma^- \mathbf{w} \rangle  & = 0  \qquad \forall (w, \mathbf{w}) \in \mathbb{U},
\end{alignat*}
which in turn leads us to 
\[
- \langle \gamma_{\bnu}^+ \mathbf{v} + \gamma^- \mathbf{u} \cdot \bnu - \xi, \gamma^+ w \rangle + \langle \bgamma_{\bnu}^- \left(\mathrm{S} + \mathbf e \mathbf r\right)+ \gamma^+ u \bnu - \boldsymbol{\xi}, \gamma^- \mathbf{w} \rangle = 0 \qquad \forall (w, \mathbf{w}) \in \mathbb{U}.
\]
Since $(\gamma^+,\gamma^-): \mathbb U \to H^{1/2}(\Gamma)\times \mathbf H^{1/2}(\Gamma)$ is surjective, it follows that the latter identity is equivalent to 
\[
\gamma_{\bnu}^+ \mathbf{v} + \gamma^- \mathbf{u} \cdot \bnu = \xi, \qquad \qquad \gamma^+ u \bnu + \bgamma_{\bnu}^- \left(\mathrm{S} + \mathbf e \mathbf r \right) = \boldsymbol{\xi} \qquad \text{on } \Gamma.
\]
The process leading from the variational equation \eqref{eq:VP} and the introduction of additional variables \eqref{eq:SecQuant1} to arrive at \eqref{eq:UAstarU} can be easily reversed, showing that the problems are equivalent. From \eqref{eq:UBounds1}, we have bounds on $\|u\|_{1, \Omega_+}$ and $\|\mathbf{u}\|_{1, \Omega_-}$.  An elementary calculation using \eqref{eq:SecQuant1} and \eqref{eq:PrimQuant1} gives us bounds on $\|\mathbf{v}\|_{\mathrm{div}, \Omega_+}$ and $\|(\mathrm{S}, \mathbf r)\|_{\HH}$ in terms of $u, \mathbf{u}$ and the data, hence the promised bound is valid. 
\end{proof}

With the hypotheses verified, we can immediately use the result from \Cref{bigTheorem} in the context of our problem.  We also point out that the hidden constant from our result in \Cref{results1} depends only on the constant $C$ in \Cref{lemmaSolve1}.   

\begin{proof}[Proof of \Cref{results1}]
Lemmas \ref{lemmaDiss1}, \ref{lemmaT1}, and \ref{lemmaSolve1} are verifications of the hypotheses of \Cref{bigTheorem}. Applying then \Cref{bigTheorem} with data given by \eqref{eq:data}, we have a unique solution with bounds:
\begin{alignat*}{3}
\|u(t)\|_{1, \Omega_+} + \|\mathbf u(t)\|_{1,\Omega_-} &\leq \|U(t)\|_{\mathbb V}\\
&\lesssim \|U(t)\|_{\mathbb H} + \|A_\star U(t)\|_{\mathbb H}\\
&\lesssim \|U(t)\|_{\mathbb H} + \|\dot{U}(t)\|_{\mathbb H} + \|L_N \eta(t)\|_{\Omega_-} + \|L_D \mu(t)\|_{\Omega_-}\\
& \lesssim H_2((\partial^{-1} \beta_1, \beta_0),t |H^{-1/2}(\Gamma) \times H^{1/2}(\Gamma))\\
&\qquad + H_1((L_N \eta, L_D \mu), t |  H^{-1/2}(\Gamma_N) \times H^{1/2}(\Gamma_D))\\
&\qquad + \|L_N \eta(t)\|_{\Omega_-} + \|L_D \mu(t)\|_{\Omega_-}.
\end{alignat*}
Recalling that both $L_N$ and $L_D$ are bounded and that we can estimate 
\[
\|\eta(t)\|_{-1/2, \Gamma_N} \leq \int_0^t \|\dot{\eta}(\tau)\|_{-1/2, \Gamma_N} \; \mathrm d\tau,
\]
with similar results for $\|\mu(t)\|_{1/2, \Gamma_D}$, and observing that from \eqref{psiProb}, we can bound 
\begin{alignat*}{3}
\|\psi(t)\|_{1, \Omega_-} &\lesssim \|\varepsilon (\mathbf u)(t)\|_{\Omega_-} + \|\eta(t)\|_{-1/2, \Gamma_N} + \|\mu(t)\|_{1/2, \Gamma_D}\\
&\lesssim \|\mathcal C^{-1} \dot{\mathrm S}(t)\|_{\Omega_-} + \|\eta(t)\|_{-1/2, \Gamma_N} + \|\mu(t)\|_{1/2, \Gamma_D}\\
&\lesssim \|\dot{U}(t)\|_{\mathbb H} + \|\eta(t)\|_{-1/2, \Gamma_N} + \|\mu(t)\|_{1/2, \Gamma_D}, 
\end{alignat*}
we arrive at the desired estimate. 
\end{proof}

\section{A semidiscrete problem} \label{Semidiscrete}
We now look at a semidiscrete version of the problem where we require the parameters $\kappa_0$ and $\kappa_1$ to be strictly positive constants.  We do this with the intention of using an integral formulation on $\Gamma$ to represent the acoustic field and a volume-variational formulation in $\Omega_-$ to represent the elastic and electric fields. These formulations will then be discretized following a Galerkin approach, which will require allowing $u$ to take values in $H^1(\mathbb R^d \b \Gamma)$ rather than just $H^1(\Omega_+)$, as known since \cite{LaSa2009}.  We introduce the finite dimensional subspaces $Y_h \subseteq H^{1/2} (\Gamma)$ and $X_h \subseteq H^{-1/2} (\Gamma)$, which will be used to implement the discretized transmission conditions. The polar sets
\begin{align*}
Y_h^\circ &:= \{ \eta \in H^{-1/2} (\Gamma) : \langle \eta, \zeta^h \rangle = 0 \quad \forall \zeta^h \in Y_h\},\\
X_h^\circ &:= \{\varphi \in H^{1/2}(\Gamma): \langle \mu^h, \varphi\rangle = 0 \quad \forall \mu^h \in X_h \},
\end{align*}
will be used for shorthand notation of Galerkin testing. 
The notation $\llbracket \gamma \cdot \rrbracket := \gamma^- - \gamma^+ $ represents the jump of the various trace operators across $\Gamma$.
The finite-dimensional space for the electric potential will be denoted $V_h \subset H^1(\Omega_-)$ and we define the spaces 
\[
V_{h,D} := V_h \cap H_D^1(\Omega_-), \qquad \gamma_D V_h:=\{ \gamma_D \varphi : \varphi \in V_h\}.
\]

For technical reasons, we will need to assume that there exists an $h$-uniformly bounded right inverse of the operator $\gamma_D:V_h \to V_{h,D}$. This hypothesis is met by the traditional finite element spaces on arbitrary shape-regular meshes of a polygon/polyhedron.

We define $\mathbf V_h \subset \mathbf H^1(\Omega_-)$ as the finite dimensional approximation space for the elastic displacement.  We keep $\beta_0, \beta_1,$ and $\eta$ as in \Cref{ContSect}, but now we need $\mu^h :[0, \infty) \to V_{h,D}$ such that we approximate $\mu$ with $\mu^h$.  The way in which  this approximation is chosen will affect the final convergence estimates, but it will not change the analysis.  We now state the second order formulation of the problem as trying to find the semidiscrete quantities 
\begin{alignat*}{4}
u^h:[0, \infty) &\to \{w^h \in H^1(\mathbb R^d \b \Gamma): \Delta w^h \in L^2(\mathbb R^d \b \Gamma)\},\\
(\psi^h, \mathbf u^h):[0, \infty) &\to V_h \times \mathbf V_h,  
\end{alignat*}
which for all $ t \geq 0$ satisfy 
\begin{subequations}\label{eq:12}
\begin{alignat}{6}
\kappa_1 \ddot{u}^h(t) &= \kappa_0 \Delta u^h (t), \label{eq:12a}\\
(\rho \ddot{\mathbf u}^h(t),\mathbf w)_{\Omega_-}\!\!
	+ (\mathcal C\varepsilon(\mathbf u^h)(t)+\mathbf e\nabla\psi^h(t),\varepsilon(\mathbf w))_{\Omega_-}\!\!
	& = \langle \llbracket\gamma \dot u^h\rrbracket(t)\!-\!\dot\beta_0(t), \gamma \mathbf w \cdot \bnu \rangle
	 \,\,\, \forall \mathbf w\in \mathbf V_h,  \label{eq:12b}\\
(-\mathbf e^\top\varepsilon(\mathbf u^h)(t)+\kappa_\psi\nabla \psi^h(t),\nabla\phi)_{\Omega_-}
	& = - \langle \eta(t),\gamma \phi\rangle_{\Gamma_N}
	\quad \forall \phi\in V_{h,D}, \label{eq:12c}\\
\gamma_D \psi^h(t) &= \mu^h(t),\label{eq:12d}
\end{alignat}
as well as
\begin{alignat}{5}
(\llbracket \gamma u^h \rrbracket (t),\llbracket \partial_\bnu u^h \rrbracket(t))  &\in Y_h \times X_h, \label{eq:12e}\\
(\gamma \dot{\mathbf{u}}^h(t) \cdot \bnu + \kappa_0 \partial_{\bnu}^+ u^h(t) + \beta_1(t), \gamma^- u^h(t)) &\in Y_h^\circ \times X_h^\circ, \label{eq:12f}
\end{alignat}
with vanishing initial conditions
\begin{equation}
u^h(0) = 0, \quad \dot{u}^h(0) = 0, \quad \mathbf{u}^h(0) = \mathbf{0}, \quad \dot{\mathbf{u}}^h(0) = \mathbf{0}.
\end{equation}
\end{subequations}

We next interpret the exotic transmission conditions \eqref{eq:12e} and \eqref{eq:12f} as a shorthand form for a Galerkin semidiscretization of a retarded potential representation of the acoustic field $u^h$. 

\paragraph{Retarded Potentials and Boundary Integral Operators.}  Following \cite{Sayas2016}, we introduce the retarded potentials and time domain boundary integral operators associated to the wave equation in a weak form.  This can easily be done through the Laplace transform. \\
Given $s \in \mathbb C_+ := \{s : \mathrm{Re }\; s >0\}$ and $(\lambda, \varphi) \in 
H^{-1/2}(\Gamma) \times H^{1/2}(\Gamma)$, the problem
\begin{subequations} \label{BIE1}
\begin{alignat}{4} 
\mathrm U \in H_\Delta^1(\mathbb R \b \Gamma)&, & \qquad  \Delta \mathrm U - s^2 \mathrm U &=0, \\
\llbracket \gamma \mathrm U \rrbracket = \varphi&, &  \llbracket \partial_\bnu \mathrm U \rrbracket &= \lambda,  
\end{alignat}
\end{subequations}
has a unique solution (see \cite[Chapters 2 \& 4]{Sayas2016} and the references therein)
\[
\mathrm U = \mathrm S(s) \lambda - \mathrm D(s) \varphi, 
\]
where $\mathrm S(s)$ and $\mathrm D(s)$ are the single layer and double layer potentials and $H_\Delta^1(\mathbb R \b \Gamma) := \{ 
\xi \in H^1(\mathbb R \b \Gamma) : \Delta \xi \in L^2(\mathbb R \b \Gamma)\}$.  We define the four boundary integral operators
\begin{alignat*}{5}
\mathrm V(s) &:= \gamma \mathrm S(s), &\qquad \mathrm K(s) &:= \tfrac12 (\gamma^+ \mathrm D(s) + \gamma^- \mathrm D(s)),\\
\mathrm K^t (s) &:= \tfrac12 (\partial_\bnu^+ \mathrm S(s) + \partial_\bnu^- \mathrm S(s)), & \mathrm W(s) &:= - \partial_\bnu \mathrm D(s).
\end{alignat*}
We can then represent the solution to  
\[
\kappa_1 \ddot{u} = \kappa_0 \Delta u, \quad \llbracket \gamma u \rrbracket = \varphi, \quad \llbracket \partial_\bnu u \rrbracket = \lambda, 
\]
as 
\[
u = \mathcal S \ast \lambda - \mathcal D \ast \varphi,
\]
where we think of $\lambda$ and $\varphi$ as causal distributions, and we have denoted the Laplace transforms of $\mathcal S$ and $\mathcal D$ respectively by $\mathcal L \{\mathcal S\}: = \mathrm S \left( s /c \right)$, $\mathcal L \{\mathcal D\}: = \mathrm D \left( s/c\right)$ (recall that $c=\sqrt{\kappa_0/\kappa_1}$ is constant now). The following identities hold 
\begin{subequations} \label{eq:jump}
\begin{alignat}{3}
\gamma^\pm u &= \mathcal V \ast \lambda - \left(\pm \tfrac12 \varphi + \mathcal K \ast \varphi \right),\label{eq:jumpA}\\
\partial_\bnu^\pm u &= \mp \tfrac12 \lambda + \mathcal K^t \ast \lambda + \mathcal W \ast \varphi,\label{eq:jumpB}
\end{alignat}
\end{subequations}
where $\mathcal V, \mathcal K, \mathcal K^t$, and $\mathcal W$ are similarly defined through the Laplace transform. 

\paragraph{A boundary-field formulation.} Consider now a solution of \eqref{eq:SecOrdForm1} extended by zero to negative times.  We can then write $ u = \mathcal S \ast \lambda - \mathcal D \ast \varphi$, where $\lambda = - \partial_\bnu^+ u$ and $\varphi = - \gamma^+ u$.  This gives an automatic trivial extension of $u$ to $\Omega_-$ for all times.  The elastic wave equation \eqref{eq:SecOrdForm1b} together with the transmission condition \eqref{eq:SecOrdForm1e}, and the divergence-free law \eqref{eq:SecOrdForm1c} together with the boundary condition \eqref{eq:SecOrdForm1f} lead to the equations
\begin{subequations} \label{eq:TDBIE1} 
\begin{alignat}{4}
(\rho \ddot{\mathbf u},\mathbf w)_{\Omega_-}\!\!
	+ (\mathcal C\varepsilon(\mathbf u)+\mathbf e\nabla\psi,\varepsilon(\mathbf w))_{\Omega_-}\!\!
	& = \langle \dot{\varphi} -\dot\beta_0, \gamma \mathbf w \cdot \bnu\rangle
	 \,\,\, \forall \mathbf w\in \mathbf H^1(\Omega_-),
\label{eq:15a} \\
(-\mathbf e^\top\varepsilon(\mathbf u)+\kappa_\psi\nabla \psi,\nabla\phi)_{\Omega_-}
	& = - \langle \eta,\gamma \phi\rangle_{\Gamma_N}
	\quad \forall \phi\in H^1_D(\Omega_-),
\label{eq:15b}
\end{alignat}
where time differentiation is now in the sense of vector-valued distributions of the time variable. The Dirichlet boundary condition \eqref{eq:SecOrdForm1g} is imposed as an essential condition 
\begin{equation}
\label{eq:15d}
\gamma_D \psi = \mu.
\end{equation}
The transmission condition \eqref{eq:SecOrdForm1d} can be written as 
\begin{equation}\label{eq:15c}
\kappa_0 \left(-\tfrac12 \lambda + \mathcal K^t \ast \lambda + \mathcal W \ast \varphi\right) + \beta_1 + \gamma^- \dot{\mathbf u} \cdot \bnu = 0,
\end{equation}
by using that $u = \mathcal S \ast \lambda - \mathcal D \ast \varphi$ and the jump condition \eqref{eq:jumpB}.  Finally we add the equation 
\begin{equation}\label{eq:15e}
\mathcal V \ast \lambda - \tfrac12 \varphi - \mathcal K \ast \varphi  = 0,
\end{equation}
\end{subequations}
which follows from imposing that $\gamma^- u = 0$ and \eqref{eq:jumpA} to ensure that the potential representation of $u = \mathcal S \ast \lambda - \mathcal D \ast \varphi$ vanishes in $\Omega_-$. The equations \eqref{eq:TDBIE1} have to be understood as a coupled system of equations whose solution $((\psi, \mathbf u), \lambda, \varphi)$ is a causal $\mathbf D(\Omega_-) \times H^{-1/2}(\Gamma) \times H^{1/2}(\Gamma)$-valued distribution. 

\paragraph{The semidiscrete coupled system.}
Our claim is that \eqref{eq:12} is equivalent to a Galerkin semidiscretization in space of \eqref{eq:TDBIE1}. Four finite dimensional spaces
\[
\mathbf V_h\subset \mathbf H^1(\Omega_-),
	\quad
V_h\subset H^1(\Omega_-),
	\quad
X_h\subset H^{-1/2}(\Gamma),
	\quad
Y_h\subset H^{1/2}(\Gamma)
\]
have been chosen, and the unknowns $(\mathbf u^h,\psi^h,\lambda^h,\varphi^h)$ are functions of the time variable with values in the respective four discrete spaces. The time domain (retarded) boundary integral equations \eqref{eq:15c} and \eqref{eq:15e} are substituted by the Galerkin semidiscretizations
\begin{subequations}\label{eq:16}
\begin{alignat}{3}
\langle \gamma \dot{\mathbf u}^h \cdot \bnu + \kappa_0 (\tfrac12 \lambda^h + \mathcal K^t \ast \lambda^h + \mathcal W \ast \varphi^h), \zeta^h \rangle &= -\langle \beta_1, \zeta^h \rangle &\qquad &\forall \zeta^h \in Y_h,\\
\langle \mu^h, \mathcal V \ast \lambda^h + \tfrac12 \varphi^h - \mathcal K \ast \varphi^h\rangle &= 0 & & \forall \mu^h \in X_h,
\end{alignat}
and $(\lambda^h,\varphi^h)$ are then used as input of a retarded potential representation of the acoustic field (that  as a result of discretization now lives on both sides of $\Gamma$)
\begin{equation}\label{eq:16c}
u^h = \mathcal S \ast \lambda^h - \mathcal D \ast \varphi^h.
\end{equation} 
Note that now $\lambda^h = \llbracket \partial_\bnu u^h \rrbracket$ and $\varphi^h = \llbracket \gamma u^h \rrbracket$ and that the Galerkin semidiscrete integral equations can be short-hand-written with the help of the polar sets:
\begin{alignat*}{3}
\gamma \dot{\mathbf u}^h \cdot \bnu + \kappa_0 (\tfrac12 \lambda^h + \mathcal K^t \ast \lambda^h + \mathcal W \ast \varphi^h) + \beta_1 &\in Y_h^\circ,\\
\mathcal V \ast \lambda^h + \tfrac12 \varphi^h - \mathcal K \ast \varphi^h &\in X_h^\circ.
\end{alignat*}
What is left is the Galerkin approximation of equations \eqref{eq:15a} and \eqref{eq:15b} (with the side restriction for the Dirichlet condition \eqref{eq:15d}), which become
\begin{alignat}{6}
(\rho \ddot{\mathbf u}^h,\mathbf w)_{\Omega_-}\!\!
	+ (\mathcal C\varepsilon(\mathbf u^h)+\mathbf e\nabla\psi^h,\varepsilon(\mathbf w))_{\Omega_-}\!\!
	& = \langle \dot\varphi^h\!-\!\dot\beta_0, \gamma \mathbf w \cdot \bnu\rangle
	 \,\,\, \forall \mathbf w\in \mathbf V_h,  \\
(-\mathbf e^\top\varepsilon(\mathbf u^h)+\kappa_\psi\nabla \psi^h,\nabla\phi)_{\Omega_-}
	& = -\langle \eta,\gamma \phi\rangle_{\Gamma_N}
	\quad \forall \phi\in V_{h,D}, \\
\gamma_D \psi^h &= \mu^h.
\end{alignat}
\end{subequations}
The system \eqref{eq:16} is a semidiscrete system that combines: two Galerkin-semidiscrete retarded integral equations, the Galerkin semidiscretization of a second order hyperbolic PDE, and the Galerkin discretization of an elliptic PDE, with the potential post processing \eqref{eq:16c} for the boundary fields to recover the acoustic field in the exterior domain. 

\section{Discrete well-posedness} \label{SDWP}

Similar to what was done in \Cref{ContWP}, we first present our stability (or discrete well-posedness) result and then we will work through the details leading to a proof. We begin by stating the dependence of the semidiscrete solution with respect to the data as a function of time with constants that are independent of $h$. By this we mean that the bounds will be independent on the particular choice of discrete spaces $\mathbf V_h, V_h, X_h$, and $Y_h$. In fact, the only place where the constants in the estimates will even notice the presence of the discretization is through the assumption that $\gamma_D:V_h \to \gamma_DV_h$ has an $h$-uniformly bounded right-inverse.
\begin{theorem} \label{Results3}
For $(\beta_0, \beta_1, \eta, \mu^h) \in W^{\ell+1}(H^{1/2}(\Gamma)) \times W^\ell(H^{-1/2}(\Gamma)) \times W^\ell(H^{-1/2}(\Gamma_N)) \times W^\ell(\gamma_D V_h)$, with $\ell=1$, problem \eqref{eq:12} is uniquely solvable and its solution satisfies
\begin{alignat*}{4}
\|u^h(t)\|_{1,\mathbb R^d \b \Gamma} + \|\psi^h(t)\|_{1, \Omega_-}\\
 \, + \|\mathbf u^h(t)\|_{1, \Omega_-} + \|\varphi^h(t)\|_{1/2, \Gamma} &\lesssim && H_2((\partial^{-1} \beta_1,\beta_0), t | H^{-1/2}(\Gamma) \times H^{1/2}(\Gamma))\\
& &&+ H_1((\eta, \mu^h), t | H^{-1/2}(\Gamma_N) \times H^{1/2}(\Gamma_D))
\end{alignat*} 
for all $t\geq 0$. If $\ell=2$, then additionally it holds
\begin{alignat*}{4}
\|\lambda^h(t) \|_{-1/2, \Gamma} \lesssim & H_3((\partial^{-1} \beta_1,\beta_0), t | H^{-1/2}(\Gamma) \times H^{1/2}(\Gamma))\\
& + H_2((\eta, \mu^h), t | H^{-1/2}(\Gamma_N) \times H^{1/2}(\Gamma_D)).
\end{alignat*}
\end{theorem}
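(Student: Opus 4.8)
The plan is to replicate, in the discrete setting, the three-step template that proved \Cref{results1}: recast \eqref{eq:12} as a first order evolution equation $\dot U^h = A_\star^h U^h + F^h$, $B^h U^h = \Xi^h$, $U^h(0)=0$ in the abstract form of \Cref{framework}, verify the dissipativity, isometric-involution and static-solvability hypotheses, and then read off the stability bounds from \Cref{bigTheorem}. I would work directly with the exotic form \eqref{eq:12} (letting $u^h$ live on all of $\mathbb R^d\b\Gamma$) rather than with the equivalent BEM--FEM system \eqref{eq:16}. The decisive structural point, and the reason the constants come out $h$-independent, is that the Galerkin BEM part, written through the polar-set conditions \eqref{eq:12e}--\eqref{eq:12f}, can be absorbed into the boundary operator $B^h$ exactly as the continuous transmission conditions were absorbed into $B$: the two-sided acoustic field carries the jumps $\llbracket\gamma u^h\rrbracket$ and $\llbracket\partial_\bnu u^h\rrbracket$, while the orthogonality relations encode the discreteness of $\varphi^h,\lambda^h$.

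For the first order reformulation I would introduce, in analogy with $\mathbf v,\mathrm S,\mathbf r$, the primitives $\mathbf v^h := \partial^{-1}(\kappa_0\nabla u^h)$, $\mathrm S^h := \partial^{-1}(\mathcal C\varepsilon(\mathbf u^h))$ and $\mathbf r^h := \partial^{-1}\nabla\psi^h$, where $\nabla\psi^h = L^h_\Omega\mathbf u^h + L^h_N\eta + L^h_D\mu^h$ is produced by the discrete analogues of the liftings \eqref{psiProb} posed over $V_h$ and $V_{h,D}$; the $h$-uniform right inverse of $\gamma_D : V_h\to\gamma_D V_h$ assumed in Section \ref{Semidiscrete} is precisely what keeps these liftings bounded uniformly in $h$. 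The Hilbert space $\mathbb H$ is the continuous one with $\Omega_+$ replaced by $\mathbb R^d\b\Gamma$ in the acoustic slots, carrying the same energy norm; $\mathbb V$ additionally constrains $\llbracket\gamma u^h\rrbracket\in Y_h$, $\llbracket\partial_\bnu u^h\rrbracket\in X_h$ and $\mathbf u^h\in\mathbf V_h$, and the multiplier space collects the two transmission residuals tested only against $Y_h^\circ,X_h^\circ$. With these identifications $A_\star^h$ and $B^h$ read exactly as in \eqref{eq:A}, with $\Omega_+$ interpreted as $\mathbb R^d\b\Gamma$ and all gradients, divergences and traces understood as two-sided.

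The involution is $T(u,\mathbf v,\mathbf u,(\mathrm S,\mathbf r)) := (u,-\mathbf v,-\mathbf u,(\mathrm S,\mathbf r))$ as in \Cref{lemmaT1}, and $TA^h=-A^hT$ follows verbatim. Dissipativity $(A^hU,U)_{\mathbb H}=0$ on $\mathrm{Ker}\,B^h$ is the crux: integrating by parts over $\mathbb R^d\b\Gamma$ yields the acoustic boundary pairing $\langle\llbracket\gamma_\bnu\mathbf v^h\rrbracket,\{\gamma u^h\}\rangle + \langle\{\gamma_\bnu\mathbf v^h\},\llbracket\gamma u^h\rrbracket\rangle$, and the memberships $\llbracket\gamma u^h\rrbracket\in Y_h$, $\llbracket\partial_\bnu u^h\rrbracket\in X_h$ together with the polar conditions \eqref{eq:12f} force these against the elastic normal-stress and velocity traces to cancel, mirroring the identity $-\langle\gamma_\bnu^+\mathbf v,\gamma^+u\rangle+\langle\bgamma_\bnu^-(\mathrm S+\mathbf e\mathbf r),\gamma^-\mathbf u\rangle=0$ of \Cref{lemmaDiss1}. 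For static solvability I would transcribe \Cref{lemmaSolve1}: pose the coercive form $a$ on $\mathbb U_h := \{w^h\in H^1(\mathbb R^d\b\Gamma): \llbracket\gamma w^h\rrbracket\in Y_h\}\times\mathbf V_h$, reconstruct $\mathbf v^h,\mathrm S^h,\mathbf r^h$ via \eqref{eq:SecQuant1}, and recover the semidiscrete transmission identities \eqref{eq:12e}--\eqref{eq:12f} in their polar-set form, the discreteness of the test spaces replacing the surjectivity argument used continuously. The identity $(\mathbf e L^h_\Omega\mathbf u,\varepsilon(\mathbf u))_{\Omega_-}=(\kappa_\psi L^h_\Omega\mathbf u,L^h_\Omega\mathbf u)_{\Omega_-}$ again gives coercivity, so Lax--Milgram applies on the finite-dimensional space with an $h$-independent constant.

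With the hypotheses in place, \Cref{bigTheorem} delivers the $\ell=1$ bound, the term $\|\varphi^h(t)\|_{1/2,\Gamma}$ being controlled since $\varphi^h=\llbracket\gamma u^h\rrbracket$ is dominated by $\|u^h(t)\|_{1,\mathbb R^d\b\Gamma}$. The $\ell=2$ estimate on $\lambda^h$ is the genuinely new ingredient and I expect it to be the main obstacle: because $\lambda^h=\llbracket\partial_\bnu u^h\rrbracket=\kappa_0^{-1}\llbracket\gamma_\bnu\dot{\mathbf v}^h\rrbracket$, bounding it in $H^{-1/2}(\Gamma)$ needs control of $\dot{\mathbf v}^h$ in $\mathbf H(\mathrm{div},\mathbb R^d\b\Gamma)$, i.e.\ of $\dot U^h$ in $\mathbb V$. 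The clean route is to apply \Cref{bigTheorem} to the once-differentiated system, whose data sit in $W^\ell$ with $\ell=2$ and hence produce the $H_3$ and $H_2$ right-hand sides, obtaining a bound on $\|\dot U^h(t)\|_{\mathbb V}$, and then invoke continuity of the two-sided normal trace $\gamma_\bnu:\mathbf H(\mathrm{div},\mathbb R^d\b\Gamma)\to H^{-1/2}(\Gamma)$ to pass from $\dot{\mathbf v}^h$ to $\lambda^h$. The care needed is in checking that the differentiated data still satisfy the vanishing initial conditions built into $W^\ell$ and that the constant stays $h$-independent, both of which follow from the group-of-isometries structure furnished by \Cref{bigTheorem}.
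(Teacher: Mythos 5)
Your overall template (first--order reformulation, verification of the three hypotheses of \Cref{framework}, appeal to \Cref{bigTheorem}, and recovery of $\lambda^h$ through $\llbracket\gamma_\bnu\dot{\mathbf v}^h\rrbracket$ and part (b) of the abstract theorem) is the same as the paper's, and several ingredients are right: the primitives $\mathbf v^h,\mathrm S^h,\mathbf r^h$, the $h$-uniformly bounded discrete liftings $L^h_\Omega,L^h_N,L^h_D$, the coercivity identity $(\mathbf e L_\Omega^h\mathbf u,\varepsilon(\mathbf u))_{\Omega_-}=(\kappa_\psi L_\Omega^h\mathbf u,L_\Omega^h\mathbf u)_{\Omega_-}$, and the bound on $\varphi^h$. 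However, there is a genuine gap at the heart of the construction: your claim that ``$A_\star^h$ and $B^h$ read exactly as in \eqref{eq:A}, with $\Omega_+$ interpreted as $\mathbb R^d\setminus\Gamma$'' cannot be made to work for the elastic block. The discrete stress $\mathrm S^h+\mathbf e\mathbf r^h$ takes values in the finite-dimensional space $\mathrm W_h+\mathbf e\,\mathbf W_h\subset \mathrm L^2_{\mathrm{sym}}(\Omega_-)$, whose elements in general have no distributional divergence in $\mathbf L^2(\Omega_-)$ and hence no normal trace; so neither the third component $\rho^{-1}\mathrm{div}(\mathrm S+\mathbf e\mathbf r)$ of the continuous $A_\star$ nor the second component $\gamma^+u\,\bnu+\bgamma_\bnu^-(\mathrm S+\mathbf e\mathbf r)$ of the continuous $B$ is defined on your spaces. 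Equation \eqref{eq:12b} holds only in Galerkin-weak form against $\mathbf V_h$, with the coupling term $\langle\llbracket\gamma\dot u^h\rrbracket-\dot\beta_0,\gamma\mathbf w\cdot\bnu\rangle$ on the right-hand side, and it cannot be split into a strong interior equation plus a separate stress transmission condition carried by $B^h$.

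The missing idea is the pair of discrete-duality operators used in the paper: $\mathrm{div}_h:\mathrm L^2_{\mathrm{sym}}(\Omega_-)\to\mathbf V_h$ and $\gamma_{\bnu,h}^t:H^{1/2}(\Gamma)\to\mathbf V_h$, defined by $(\rho\,\mathrm{div}_h\mathrm M,\mathbf w)_{\Omega_-}=-(\mathrm M,\varepsilon(\mathbf w))_{\Omega_-}$ and $(\rho\,\gamma_{\bnu,h}^t\xi,\mathbf w)_{\Omega_-}=\langle\gamma\mathbf w\cdot\bnu,\xi\rangle$ for all $\mathbf w\in\mathbf V_h$, with which the elastic evolution becomes $\dot{\mathbf u}^h=\mathrm{div}_h(\mathrm S^h+\mathbf e\mathbf r^h)+\gamma_{\bnu,h}^t(\llbracket\gamma u^h\rrbracket-\beta_0)$. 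The jump term thus sits \emph{inside} $A_\star$, and the datum $\beta_0$ enters through the operator $G\xi=(0,\mathbf 0,\gamma_{\bnu,h}^t\xi,0,\mathbf 0)$, i.e.\ through the $\mathbb M_1$-slot of the abstract framework, which was not needed in the continuous problem. In your formulation there is in fact no slot at all through which $\beta_0$ can enter: your multiplier carries only the two residuals of \eqref{eq:FirstOrdForm3i}, which involve $\beta_1$ alone. The omission propagates into your dissipativity computation, where the claimed cancellation against ``elastic normal-stress traces'' must instead be the identity $(\rho\,\gamma_{\bnu,h}^t\llbracket\gamma u^h\rrbracket,\mathbf u^h)_{\Omega_-}=\langle\gamma\mathbf u^h\cdot\bnu,\llbracket\gamma u^h\rrbracket\rangle$, which follows from the definition of $\gamma_{\bnu,h}^t$, and into your static solvability argument, where the reconstruction of $\mathbf u^h$ from the variational solution must use $\mathrm{div}_h$ and $\gamma_{\bnu,h}^t$ rather than an integration by parts. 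Your $\ell=2$ argument for $\lambda^h$ is sound once this structure is in place.
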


\paragraph{The first order formulation.} To be able to write \eqref{eq:12} in first order form we need to introduce the discrete versions of the divergence, the transpose of $\gamma_\bnu$, and of the operators defined in \eqref{psiProb}.  The discrete divergence $\mathrm{div}_h : \mathrm L_\mathrm{sym}^2 (\Omega_-) \to \mathbf V_h$ is given by 
\[ 
\mathrm{div}_h \mathrm M \in \mathbf V_h, \qquad
(\rho \; \mathrm{div}_h \mathrm M, \mathbf w)_{\Omega_-} = - (\mathrm M, \varepsilon (\mathbf w))_{\Omega_-} \qquad \forall \mathbf w \in \mathbf V_h.
\]
The discrete transposed normal trace $\gamma_{\bnu, h}^t : H^{1/2}(\Gamma) \to \mathbf V_h$ is defined by 
\[
\gamma_{\bnu, h}^t \xi\in \mathbf V_h, \qquad 
(\rho \; \gamma_{\bnu, h}^t \xi, \mathbf w)_{\Omega_-} = \langle \gamma \mathbf w \cdot \bnu, \xi \rangle \qquad \forall \mathbf w \in \mathbf V_h.
\]
Introducing the space $\mathbf W_h:=\nabla V_h=\{\nabla u^h\,:\, u^h\in V_h\}$, we consider the operators 
\[
L_\Omega^h : \mathbf H^1(\Omega_-) \longrightarrow \mathbf W_h,
\quad
L_N^h : H^{-1/2}(\Gamma_N) \longrightarrow \mathbf W_h,
\quad
L_D^h : \gamma_D V_h \longrightarrow \mathbf W_h,
\]
defined by
\begin{subequations} 
\begin{equation}
L_\Omega^h\mathbf u+ L_N^h\eta+L_D^h\mu^h =\nabla \psi^h,
\end{equation}
where $\psi^h$ satisfies
\begin{alignat}{3}
& \psi^h \in V_h,\qquad 
\gamma_D \psi^h = \mu^h,\\
& (\kappa_\psi \nabla \psi^h, \nabla \phi)_{\Omega_-} = (\varepsilon (\mathbf u), \mathbf e \nabla \phi)_{\Omega_-} - \langle \eta, \gamma \phi\rangle_{\Gamma_N} \quad\forall \phi \in V_{h,D}.
\end{alignat}
\end{subequations}
From the above we can see that by taking different combinations of homogeneous data, it is possible to bound each of these separately 
\[
\|L_\Omega^h \mathbf u \|_{\Omega_-} \leq C_1 \|\varepsilon (\mathbf u)\|_{\Omega_-}, \qquad 
\|L_N^h \eta\|_{\Omega_-} \leq C_2 \|\eta\|_{-1/2, \Gamma_N}, \qquad
\|L_D^h \mu^h\|_{\Omega_-} \leq C_3 \|\mu^h\|_{1/2, \Gamma_D},
\]
where all constants are independent of $h$ and the bound on $L_D \mu^h$ uses the hypothesis that $\gamma_D$ has a uniformly bounded right inverse.  Furthermore
\[
(\kappa_\psi L_\Omega^h \mathbf u, L_\Omega^h \mathbf u)_{\Omega_-} = (\varepsilon(\mathbf u), \mathbf e L_\Omega^h \mathbf u)_{\Omega_-}.
\] 
With the variables 
\[
\mathbf v^h:=  \partial^{-1}(\kappa_0 \nabla u^h), \qquad \mathrm S^h := \partial^{-1} \mathcal C \varepsilon(\mathbf u^h), \qquad \mathbf r^h:= \partial^{-1} \nabla \psi^h,
\]
and using the space $\mathrm W_h :=	\{ \mathcal C\varepsilon(\mathbf u^h)\,:\, \mathbf u^h \in \mathbf V_h\}$ for ease of notation, we can write a first order form of the semidiscrete problem \eqref{eq:12} in the following way: we look for 
\[
(u^h, \mathbf v^h, \mathbf u^h, \mathrm S^h, \mathbf r^h):[0, \infty) \to H^1(\mathbb R^d \b \Gamma) \times \mathbf H(\mathrm{div}, \mathbb R^d \setminus \Gamma) \times \mathbf V_h \times \mathrm W_h \times \mathbf W_h,
\]
which for all $t \geq 0$ satisfies
\begin{subequations} \label{eq:FirstOrdForm3} 
\begin{alignat}{7}
\dot{u}^h(t) &= \kappa_1^{-1} \nabla \cdot \mathbf{v}^h(t),\\ 
\dot{\mathbf{v}}^h(t) &= \kappa_0 \nabla u^h(t),\\ 
\dot{\mathbf{u}}^h(t) &= \mathrm{div}_h \left(\mathrm S^h (t) + \mathbf e \mathbf r^h (t) \right) + \gamma_{\bnu,h}^t(\llbracket \gamma u^h \rrbracket (t) - \beta_0 (t)),\\ 
\dot{\mathrm{S}}^h(t) &= \mathcal C \varepsilon (\mathbf{u}^h)(t),\\
\dot{\mathbf r}^h(t) &= L_\Omega^h \mathbf u^h (t) + L_N^h \eta(t) + L_D^h \mu^h (t), 
\end{alignat}
as well as
\begin{alignat}{4}
(\llbracket \gamma u^h \rrbracket (t),\; \llbracket \gamma_\bnu \mathbf v^h \rrbracket(t)) & \in Y_h \times X_h, \label{eq:FirstOrdForm3f}\\
(\gamma \mathbf{u}^h \cdot \bnu (t) + \gamma_{\bnu}^+ \mathbf{v}^h (t)+ \partial^{-1} \beta_1 (t),\;  \gamma^- u^h (t)) &\in Y_h^\circ\times  X_h^\circ\label{eq:FirstOrdForm3i},
\end{alignat}
and have vanishing initial values
\begin{equation}
u^h(0) = 0, \quad \mathbf{v}^h(0) = \mathbf{0}, \quad \mathbf{u}^h(0) = \mathbf{0}, \quad \mathrm{S}^h(0) = 0, \quad \mathbf r(0) = \mathbf 0. 
\end{equation}
\end{subequations}

\paragraph{The abstract framework.}  As with the continuous problem, we now show that \eqref{eq:FirstOrdForm3} has a unique solution by way of fitting this problem into the abstract framework of \Cref{framework}.  First we define the Hilbert spaces 
\begin{alignat*}{3}
\mathbb H &:= L^2 (\mathbb R^d \setminus\Gamma) \times \mathbf L^2 (\mathbb R^2 \setminus\Gamma) \times \mathbf V_h \times \mathrm W_h \times \mathbf W_h,\\ 
\mathbb V &:= H^1(\mathbb R^d \setminus\Gamma) \times \mathbf H(\mathrm{div}, \mathbb R^d \setminus\Gamma) \times \mathbf V_h \times \mathrm W_h \times \mathbf W_h,\\
\mathbb M &= \mathbb M_1 \times \mathbb M_2 := H^{1/2}(\Gamma) \times \Big((Y_h^\circ)^* \times (X_h^\circ)^* \times Y_h^* \times X_h^* \Big), 
\end{alignat*}
where the asterisks denote dual spaces.  In $\mathbb H$ we take the norm
\begin{multline*}
\|U^h\|_\mathbb{H}^2 = \|(u^h, \mathbf v^h, \mathbf u^h, \mathrm S^h, \mathbf r^h) \|_\mathbb{H}^2 : = (\kappa_1 u^h, u^h)_{\mathbb R^d \setminus\Gamma} + (\kappa_0^{-1} \mathbf v^h, \mathbf v^h)_{\mathbb R^d \setminus\Gamma}\\ + (\rho \mathbf u^h, \mathbf u^h)_{\Omega_-} + (\mathcal C^{-1} \mathrm S^h, \mathrm S^h)_{\Omega_-} + (\kappa_\psi \mathbf r^h, \mathbf r^h)_{\Omega_-},
\end{multline*}
and in $\mathbb M$ we take the natural product norm. 

For this problem we have $A_\star: \mathbb V \to \mathbb H$ and $G:\mathbb M_1 \to \mathbb H$  where the actions of these operators on $U^h = (u^h, \mathbf v^h, \mathbf u^h, \mathrm S^h, \mathbf r^h) \in \mathbb V$ and $\xi \in \mathbb M_1$ are defined as
\begin{alignat*}{2}
A_\star U^h &:= \Big(\kappa_1^{-1} \nabla \cdot \mathbf v^h,\;  \kappa_0 \nabla u^h, \; \mathrm{div}_h (\mathrm S^h + \mathbf e \mathbf r^h) + \gamma_{\bnu, h}^t \llbracket \gamma u^h \rrbracket, \; \mathcal C \varepsilon(\mathbf u^h),\;  L_\Omega^h \mathbf u^h\Big),\\
G\xi &:= (0,\; \mathbf 0, \; \gamma_{\bnu, h}^t \xi, \; 0, \; \mathbf 0 ).
\end{alignat*} 
With this definition of $A_\star$, we set the norm on $\mathbb V$ to be given by 
\[
\|U^h\|_{\mathbb V}^2 :=\|U^h\|_{\mathbb H}^2 + \|A_\star U^h\|_{\mathbb H}^2.
\]
Before we introduce the operator $B$, let us explain some notation.  We can consider $\llbracket \gamma u^h \rrbracket \in H^{1/2}(\Gamma) \equiv (H^{-1/2}(\Gamma))^*$, as a functional acting on $H^{-1/2}(\Gamma)$, and therefore by $\llbracket \gamma u^h \rrbracket \big|_{Y_h^\circ}$ we mean the restriction of $\llbracket \gamma u^h \rrbracket :H^{-1/2}(\Gamma) \to \mathbb C$ to $Y_h^\circ\subset H^{-1/2}(\Gamma)$.  In other words, $\llbracket \gamma u^h \rrbracket \big|_{Y_h^\circ}: Y_h^\circ \to \mathbb C$ is an element of $(Y_h^\circ)^*$.  Note that 
\[
\|\llbracket \gamma u^h \rrbracket \big|_{Y_h^\circ}\|_{(Y_h^\circ)^*} \leq \|\llbracket \gamma u^h \rrbracket \|_{H^{-1/2}(\Gamma)^\ast} = \|\llbracket \gamma u^h \rrbracket \|_{1/2,\Gamma},
\]
and that 
\[
\llbracket \gamma u^h \rrbracket \big|_{Y_h^\circ} = 0
\qquad\Longleftrightarrow\qquad
\llbracket \gamma u^h \rrbracket \in Y_h
\] 
since $Y_h$ is closed.  This should give a better idea of the meaning of the first component of $B: \mathbb V \to \mathbb M_2$ whose action on $U^h$ is given by
\[
BU^h := \Big(\llbracket \gamma u^h \rrbracket \big|_{Y_h^\circ}, \; \llbracket \gamma_\bnu \mathbf v^h \rrbracket \big|_{X_h^\circ}, \; (\gamma \mathbf u^h \cdot \bnu + \gamma_\bnu^+ \mathbf v^h)\big|_{Y_h}, \; \gamma^- u^h\big|_{X_h}\Big).
\]
The remaining components of $B$ can be thought of similarly.  As before, we define $D(A) = \mathrm{Ker} \; B$ and $A = A_\star\big|_{D(A)}$.  Next we verify the hypotheses of the abstract framework. 

\begin{lemma} \label{LemmaFEM1}
For each $U^h \in D(A)$, we have $(AU^h, U^h)_{\mathbb H} = 0$.
\end{lemma}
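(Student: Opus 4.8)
The plan is to reproduce, at the discrete level, the cancellation that drives \Cref{lemmaDiss1}. I would first expand $(AU^h,U^h)_{\mathbb H}=(A_\star U^h,U^h)_{\mathbb H}$ with the weighted inner product of $\mathbb H$. Because the weights $\kappa_1,\kappa_0^{-1},\rho,\mathcal C^{-1},\kappa_\psi$ are chosen precisely to undo the weights carried by $A_\star$, the acoustic contributions collapse to $(\nabla\cdot\mathbf v^h,u^h)_{\mathbb R^d\setminus\Gamma}+(\nabla u^h,\mathbf v^h)_{\mathbb R^d\setminus\Gamma}$, the stress contribution to $(\varepsilon(\mathbf u^h),\mathrm S^h)_{\Omega_-}$, and the two remaining interior pieces to $(\rho\,\mathrm{div}_h(\mathrm S^h+\mathbf e\mathbf r^h),\mathbf u^h)_{\Omega_-}+(\rho\,\gamma_{\bnu,h}^t\llbracket\gamma u^h\rrbracket,\mathbf u^h)_{\Omega_-}+(\kappa_\psi L_\Omega^h\mathbf u^h,\mathbf r^h)_{\Omega_-}$. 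Since $\mathbf u^h\in\mathbf V_h$, the defining identities of $\mathrm{div}_h$ and $\gamma_{\bnu,h}^t$ apply verbatim and turn the first two of these into $-(\mathrm S^h+\mathbf e\mathbf r^h,\varepsilon(\mathbf u^h))_{\Omega_-}$ and $\langle\gamma\mathbf u^h\cdot\bnu,\llbracket\gamma u^h\rrbracket\rangle$. Integrating by parts separately over $\Omega_+$ and $\Omega_-$ (legitimate because $u^h\in H^1(\mathbb R^d\setminus\Gamma)$ and $\mathbf v^h\in\mathbf H(\mathrm{div},\mathbb R^d\setminus\Gamma)$) rewrites the acoustic pair as $-\langle\gamma_\bnu^+\mathbf v^h,\gamma^+u^h\rangle+\langle\gamma_\bnu^-\mathbf v^h,\gamma^-u^h\rangle$.

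After these substitutions the purely elastic stress terms cancel, $-(\mathrm S^h,\varepsilon(\mathbf u^h))+(\varepsilon(\mathbf u^h),\mathrm S^h)=0$, and the interior remainder is
\[
(\kappa_\psi L_\Omega^h\mathbf u^h,\mathbf r^h)_{\Omega_-}-(\mathbf e\mathbf r^h,\varepsilon(\mathbf u^h))_{\Omega_-}=(\kappa_\psi L_\Omega^h\mathbf u^h-\mathbf e^\top\varepsilon(\mathbf u^h),\mathbf r^h)_{\Omega_-}.
\]
Verifying that this vanishes is the step I expect to be the main obstacle. Writing $L_\Omega^h\mathbf u^h=\nabla\psi_0^h$ with $\psi_0^h\in V_{h,D}$ and $\mathbf r^h=\nabla\chi^h$ with $\chi^h\in V_h$ (as $\mathbf r^h\in\mathbf W_h=\nabla V_h$), the defining variational identity for $L_\Omega^h$ reads exactly $(\kappa_\psi\nabla\psi_0^h-\mathbf e^\top\varepsilon(\mathbf u^h),\nabla\phi)_{\Omega_-}=0$ for every $\phi\in V_{h,D}$; this is the same relation recorded earlier in the form $(\kappa_\psi L_\Omega^h\mathbf u,L_\Omega^h\mathbf u)_{\Omega_-}=(\varepsilon(\mathbf u),\mathbf e L_\Omega^h\mathbf u)_{\Omega_-}$. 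The delicate point is that this orthogonality is supplied a priori only against the directions $\nabla V_{h,D}$, whereas $\mathbf r^h$ sweeps out the larger gradient space $\mathbf W_h$; the heart of the argument is therefore to confirm that $\kappa_\psi L_\Omega^h\mathbf u^h-\mathbf e^\top\varepsilon(\mathbf u^h)$ is in fact $L^2(\Omega_-)$-orthogonal to all of $\mathbf W_h$, so that the remainder is genuinely zero. This is the discrete counterpart of the identity $(\kappa_\psi L_\Omega\mathbf u,\mathbf r)=(\varepsilon(\mathbf u),\mathbf e\mathbf r)$ used in \Cref{lemmaDiss1}, and it is the one place where the precise construction of the electric subproblem must be invoked.

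It remains to annihilate the surviving boundary functional
\[
-\langle\gamma_\bnu^+\mathbf v^h,\gamma^+u^h\rangle+\langle\gamma_\bnu^-\mathbf v^h,\gamma^-u^h\rangle+\langle\gamma\mathbf u^h\cdot\bnu,\llbracket\gamma u^h\rrbracket\rangle,
\]
for which I would feed in the four membership relations packaged in $\mathrm{Ker}\,B$: $\llbracket\gamma u^h\rrbracket\in Y_h$, $\llbracket\gamma_\bnu\mathbf v^h\rrbracket\in X_h$, $\gamma\mathbf u^h\cdot\bnu+\gamma_\bnu^+\mathbf v^h\in Y_h^\circ$, and $\gamma^-u^h\in X_h^\circ$. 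Splitting $\gamma_\bnu^-\mathbf v^h=\gamma_\bnu^+\mathbf v^h+\llbracket\gamma_\bnu\mathbf v^h\rrbracket$ and using $\langle\llbracket\gamma_\bnu\mathbf v^h\rrbracket,\gamma^-u^h\rangle=0$ (an $X_h$ element paired with an $X_h^\circ$ element) collapses the three terms to $\langle\gamma_\bnu^+\mathbf v^h+\gamma\mathbf u^h\cdot\bnu,\llbracket\gamma u^h\rrbracket\rangle$, which vanishes because it pairs an element of $Y_h^\circ$ with $\llbracket\gamma u^h\rrbracket\in Y_h$. This closes the argument. The boundary bookkeeping is routine once the polar-set orthogonalities are matched to the correct jumps; the only genuinely delicate ingredient is the interior coupling term of the second paragraph.
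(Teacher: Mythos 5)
Everything you do up to the interior coupling term is exactly the paper's own proof: the same expansion of the weighted $\mathbb H$-inner product, the same use of the defining identities of $\mathrm{div}_h$ and $\gamma_{\bnu,h}^t$ (legitimate since $\mathbf u^h\in\mathbf V_h$), the same subdomain-wise integration by parts, and the same polar-set bookkeeping for the boundary functional (your last paragraph is the paper's final line, written out in detail). But the proposal does not prove the lemma, because you never carry out the one step you yourself call ``the heart of the argument.'' Having reduced everything to
\[
(AU^h,U^h)_{\mathbb H}
=\bigl(\kappa_\psi L_\Omega^h\mathbf u^h-\mathbf e^\top\varepsilon(\mathbf u^h),\,\mathbf r^h\bigr)_{\Omega_-},
\]
you state that it remains ``to confirm'' that the defect $\kappa_\psi L_\Omega^h\mathbf u^h-\mathbf e^\top\varepsilon(\mathbf u^h)$ is $L^2(\Omega_-)$-orthogonal to all of $\mathbf W_h$, and you stop there. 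Announcing the central cancellation is not establishing it; as written, the argument is incomplete at precisely the only non-routine point.

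Moreover, the obstacle you flagged is real, not a formality, so it cannot be ``confirmed'' by simply invoking the construction of the electric subproblem. The definition of $L_\Omega^h$ supplies orthogonality of the defect only against $\nabla V_{h,D}$, whereas $D(A)=\mathrm{Ker}\,B$ places no constraint at all on $\mathbf r^h$, which sweeps the strictly larger space $\mathbf W_h=\nabla V_h$ whenever $\Gamma_D$ is nontrivial. Concretely: take $u^h=0$, $\mathbf v^h=\mathbf 0$, $\mathrm S^h=0$, $\mathbf u^h\in\mathbf V_h$ with $\gamma\mathbf u^h\cdot\bnu\in Y_h^\circ$, and $\mathbf r^h=\nabla\chi^h$ with $\gamma_D\chi^h\neq0$; if in addition $V_h$ is chosen so that $V_{h,D}=\{0\}$ (an admissible choice in this abstract setting), then $L_\Omega^h\equiv0$, such a $U^h$ lies in $D(A)$, and the display above equals $-(\varepsilon(\mathbf u^h),\mathbf e\nabla\chi^h)_{\Omega_-}$, which is nonzero for generic $\mathbf e$, $\mathbf u^h$, $\chi^h$. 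You should also know that the paper's printed proof of \Cref{LemmaFEM1} slides over this identical point: it asserts $(\kappa_\psi L_\Omega^h\mathbf u^h,\mathbf r^h)_{\Omega_-}=(\varepsilon(\mathbf u^h),\mathbf e\mathbf r^h)_{\Omega_-}$, thereby testing the Galerkin identity of $L_\Omega^h$ against arbitrary elements of $\nabla V_h$ when it is only available on $\nabla V_{h,D}$; the identity recorded earlier in the paper, $(\kappa_\psi L_\Omega^h\mathbf u,L_\Omega^h\mathbf u)_{\Omega_-}=(\varepsilon(\mathbf u),\mathbf e L_\Omega^h\mathbf u)_{\Omega_-}$, is just the special case $\mathbf r^h=L_\Omega^h\mathbf u\in\nabla V_{h,D}$. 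So you located a genuine subtlety, but locating it is not resolving it: to close the proof one must repair the setting, for example by working with $\tilde{\mathbf r}^h:=\mathbf r^h-\partial^{-1}L_D^h\mu^h$, i.e.\ redefining $\mathbf W_h:=\nabla V_{h,D}$ and absorbing the Dirichlet lifting into the forcing term $F$, on which invariant subspace the cancellation does follow at once from the definition of $L_\Omega^h$. Without some modification of this kind, neither your argument nor the verbatim computation in the paper yields $(AU^h,U^h)_{\mathbb H}=0$ on all of $D(A)$.
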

\begin{proof}
A simple calculation gives
\begin{alignat*}{2}
(AU^h, U^h)_{\mathbb H} &= (\nabla \cdot \mathbf v^h, u^h)_{\mathbb R^d \setminus\Gamma} + (\nabla u^h, \mathbf v^h)_{\mathbb R^d \setminus\Gamma}\\
&\qquad  + (\rho (\mathrm{div}_h (\mathrm S^h + \mathbf e \mathbf r^h) + \gamma_{\bnu, h}^t \llbracket \gamma u^h \rrbracket), \mathbf u^h)_{\Omega_-} + (\varepsilon(\mathbf u^h), \mathrm S^h)_{\Omega_-} + (\kappa_\psi L_\Omega^h \mathbf u^h, \mathbf r^h)\\
& = \langle \gamma_\bnu^- \mathbf v^h, \gamma^- u^h \rangle - \langle \gamma_\bnu^+ \mathbf v^h, \gamma^+ u^h \rangle - (\mathrm S^h + \mathbf e \mathbf r^h, \varepsilon(\mathbf u^h))_{\Omega_-}\\
&\qquad + \langle \gamma \mathbf u^h \cdot \bnu, \llbracket \gamma u^h \rrbracket \rangle + (\varepsilon(\mathbf u^h), \mathrm S^h)_{\Omega_-} + (\varepsilon(\mathbf u^h), \mathbf e \mathbf r^h)_{\Omega_-}\\
& = \langle \llbracket \gamma_\bnu \mathbf v^h \rrbracket, \gamma^- u \rangle + \langle \gamma_\bnu^+ \mathbf v^h + \gamma \mathbf u^h \cdot \bnu, \llbracket \gamma u^h \rrbracket \rangle = 0 \qquad \forall U^h \in D(A).
\end{alignat*}
Note that the final equality is a result of the duality pairings of elements of closed spaces with elements of their polar sets.  
\end{proof}

\begin{lemma} \label{LemmaT3}
The operator $T(u^h, \mathbf v^h, \mathbf u^h, \mathrm S^h, \mathbf r^h) := (u^h, -\mathbf v^h, -\mathbf u^h, \mathrm S^h, \mathbf r^h)$ is an isometry from $\mathbb H$ to $\mathbb H$, a bijection from $D(A)$ to $D(A)$, and satisfies $TA = -TA$.  
\end{lemma}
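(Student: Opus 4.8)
The plan is to use that $T$ is a linear involution ($T^2 = \mathrm{Id}$) that reverses the sign of the $\mathbf v^h$ and $\mathbf u^h$ slots while leaving $u^h$, $\mathrm S^h$, and $\mathbf r^h$ fixed, and then to verify every assertion by tracking signs slot by slot. First I would check the isometry property on $\mathbb{H}$: the squared norm $\|U^h\|_{\mathbb H}^2$ is a sum of five positive quadratic forms, each in a single component, namely $(\kappa_1 u^h, u^h)$, $(\kappa_0^{-1}\mathbf v^h, \mathbf v^h)$, $(\rho\mathbf u^h, \mathbf u^h)$, $(\mathcal C^{-1}\mathrm S^h, \mathrm S^h)$, and $(\kappa_\psi\mathbf r^h, \mathbf r^h)$. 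The two sign-reversed components enter only through these squared terms, so $\|TU^h\|_{\mathbb H} = \|U^h\|_{\mathbb H}$; combined with linearity and $T^2 = \mathrm{Id}$, this already gives that $T$ is a bijective isometry of $\mathbb{H}$.

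Next I would establish the anticommutation identity $A_\star T = -T A_\star$ on all of $\mathbb{V}$, from which $TA = -AT$ follows by restriction to $D(A)$ (the statement's $TA=-TA$ being a typographical slip for this relation). This is a direct component-wise comparison of $A_\star(TU^h)$ with $-T(A_\star U^h)$ using the definition of $A_\star$. For each output slot the $T$-flip of that slot must match the sign change induced by $A_\star$ reading the (possibly flipped) input fields; a one-line check confirms compatibility in the first, second, fourth, and fifth slots. The only slot needing attention is the third, $\mathrm{div}_h(\mathrm S^h + \mathbf e\mathbf r^h) + \gamma_{\bnu,h}^t\llbracket\gamma u^h\rrbracket$, which couples the elastic--potential fields $\mathrm S^h,\mathbf r^h$ with the acoustic trace $\llbracket\gamma u^h\rrbracket$; since all three of these are $T$-invariant and the third output slot is flipped by $T$, both contributions transform with the same sign and the identity holds there as well. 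As a byproduct, $A_\star T = -T A_\star$ shows that $T$ preserves $\|A_\star\,\cdot\,\|_{\mathbb H}$ and hence the graph norm $\|\,\cdot\,\|_{\mathbb V}$, so $T$ is also a bijective isometry of $\mathbb{V}$.

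Finally I would confirm that $T$ maps $D(A) = \mathrm{Ker}\,B$ bijectively onto itself by computing $B(TU^h)$ from the definition of $B$. Its four components coincide, up to sign, with those of $BU^h$: the first, $\llbracket\gamma u^h\rrbracket\big|_{Y_h^\circ}$, and the fourth, $\gamma^- u^h\big|_{X_h}$, are unchanged because they involve only $u^h$, whereas the second, $\llbracket\gamma_\bnu\mathbf v^h\rrbracket\big|_{X_h^\circ}$, and the third, $(\gamma\mathbf u^h\cdot\bnu + \gamma_\bnu^+\mathbf v^h)\big|_{Y_h}$, acquire a minus sign because $\mathbf v^h$ and $\mathbf u^h$ both reverse. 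Hence $BU^h = 0$ if and only if $B(TU^h) = 0$, giving $T(D(A)) = D(A)$; together with $T^2 = \mathrm{Id}$ this yields the claimed bijection and makes $A(TU^h)$ well defined in $TA = -AT$. The only genuinely delicate bookkeeping --- the mild obstacle here --- is the consistency within the coupled third slots of $A_\star$ and $B$: one must check that all fields grouped into a single output flip with one common sign, which they do precisely because the paired quantities transform identically under $T$. The whole argument parallels the continuous case recorded in \Cref{lemmaT1}.
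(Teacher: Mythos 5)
Your proof is correct and is exactly the slot-by-slot verification that the paper compresses into ``It is straightforward'': the isometry follows since the $\mathbb H$-norm is a sum of componentwise quadratic forms, the anticommutation $A_\star T=-TA_\star$ and the invariance of $\mathrm{Ker}\,B$ follow by tracking which slots flip sign, and you rightly read the statement's ``$TA=-TA$'' as a typographical slip for $TA=-AT$ (compare \Cref{lemmaT1}). Nothing is missing.
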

\begin{proof}
It is straightforward.
\end{proof}

\begin{lemma} \label{LemmaSolve3}
For $F = (f, \mathbf f, \mathbf g, \mathrm G, \mathbf h) \in \mathbb H$ and $\Xi = (\xi, \chi) = (\xi, (\psi_1, \psi_2, \eta_1, \eta_2)) \in \mathbb M$, the problem of finding $U^h \in \mathbb V$ such that
\[
U^h = A_\star U^h + G \xi + F, \qquad BU^h =  \chi,
\]
has a unique solution and 
\[
\|U^h\|_{\mathbb V} \leq C(\|F\|_{\mathbb H} + \|\Xi\|_{\mathbb M}),
\]
where $C$ is independent of $h$. 
\end{lemma}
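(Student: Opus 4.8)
The plan is to follow the blueprint of Lemma~\ref{lemmaSolve1}: uniqueness will come for free from linearity and the dissipativity recorded in Lemma~\ref{LemmaFEM1}, while existence will be reduced to a coercive variational problem solved by Lax--Milgram, after which the remaining components of $U^h$ are reconstructed and all equations and transmission conditions are recovered. For uniqueness, if $W$ denotes the difference of two solutions then $W=A_\star W$ and $BW=0$, so $W\in D(A)$ and $W=AW$; pairing with $W$ and invoking Lemma~\ref{LemmaFEM1} gives $\|W\|_{\mathbb H}^2=(AW,W)_{\mathbb H}=0$.

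For existence I single out $(u^h,\mathbf u^h)$ as the primary unknowns and treat $\mathbf v^h:=\kappa_0\nabla u^h+\mathbf f$, $\mathrm S^h:=\mathcal C\varepsilon(\mathbf u^h)+\mathrm G$ and $\mathbf r^h:=L_\Omega^h\mathbf u^h+\mathbf h$ as reconstructed quantities. The two essential (Dirichlet-type) conditions hidden in $BU^h=\chi$, namely the prescription of $\llbracket\gamma u^h\rrbracket\big|_{Y_h^\circ}$ and of $\gamma^-u^h\big|_{X_h}$, are built into the trial/test space
\[
\mathbb U_h:=\{(w^h,\mathbf w)\in H^1(\mathbb R^d\b\Gamma)\times\mathbf V_h:\ \llbracket\gamma w^h\rrbracket\in Y_h,\ \gamma^-w^h\in X_h^\circ\},
\]
the non-homogeneous data being absorbed by an $h$-uniform lifting $u_\star^h$ whose existence and bound follow from surjectivity of the traces and from Hahn--Banach, the norms on $(Y_h^\circ)^*$ and $X_h^*$ being exactly the dual polar norms. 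On $\mathbb U_h$ I pose
\begin{align*}
a((u^h,\mathbf u^h),(w^h,\mathbf w))
&:=(\kappa_1 u^h,w^h)_{\mathbb R^d\b\Gamma}+(\kappa_0\nabla u^h,\nabla w^h)_{\mathbb R^d\b\Gamma}+(\rho\mathbf u^h,\mathbf w)_{\Omega_-}\\
&\quad+(\mathcal C\varepsilon(\mathbf u^h),\varepsilon(\mathbf w))_{\Omega_-}+(\mathbf e L_\Omega^h\mathbf u^h,\varepsilon(\mathbf w))_{\Omega_-}\\
&\quad+\langle\gamma\mathbf u^h\cdot\bnu,\llbracket\gamma w^h\rrbracket\rangle-\langle\gamma\mathbf w\cdot\bnu,\llbracket\gamma u^h\rrbracket\rangle,
\end{align*}
the two boundary terms being skew and hence invisible to coercivity. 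Coercivity with an $h$-independent constant rests on the strict positivity of the constants $\kappa_0,\kappa_1$ (which controls $\|u^h\|_{1,\mathbb R^d\b\Gamma}$ outright, no Poincar\'e needed), on Korn's inequality together with $\mathcal C\mathrm A:\mathrm A\ge C_{0,\Sigma}\|\mathrm A\|^2$, and on the identity $(\mathbf e L_\Omega^h\mathbf u^h,\varepsilon(\mathbf u^h))_{\Omega_-}=(\kappa_\psi L_\Omega^h\mathbf u^h,L_\Omega^h\mathbf u^h)_{\Omega_-}\ge0$. The data $\xi,\psi_2,\eta_1$ enter the right-hand side through $\langle\gamma\mathbf w\cdot\bnu,\xi\rangle$, $\psi_2(\gamma^-w^h)$ and $\eta_1(\llbracket\gamma w^h\rrbracket)$, which are bounded $h$-uniformly precisely because $\gamma^-w^h\in X_h^\circ$ and $\llbracket\gamma w^h\rrbracket\in Y_h$, so that the dual polar norms apply; Lax--Milgram then yields a unique $(u^h,\mathbf u^h)$ with an $h$-independent bound.

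With $(u^h,\mathbf u^h)$ in hand I reconstruct $\mathbf v^h,\mathrm S^h,\mathbf r^h$ and recover the system as in Lemma~\ref{lemmaSolve1}: testing against $\mathcal D(\Omega_+)\times\mathcal D(\Omega_-)^d$ and using distribution theory produces the interior identities, in particular $u^h=\kappa_1^{-1}\nabla\cdot\mathbf v^h+f$ and hence $\mathbf v^h\in\mathbf H(\mathrm{div},\mathbb R^d\b\Gamma)$, while the finite-dimensional elastic equation is immediate from the definitions of $\mathrm{div}_h$ and $\gamma_{\bnu,h}^t$. After an integration by parts the residual identity is a pure boundary pairing over $\mathbb U_h$; taking $\llbracket\gamma w^h\rrbracket=0$ with $\gamma^-w^h$ ranging over all of $X_h^\circ$ recovers $\llbracket\gamma_\bnu\mathbf v^h\rrbracket\big|_{X_h^\circ}=\psi_2$, and taking $\gamma^-w^h=0$ with $\llbracket\gamma w^h\rrbracket$ ranging over all of $Y_h$ recovers $(\gamma\mathbf u^h\cdot\bnu+\gamma_\bnu^+\mathbf v^h)\big|_{Y_h}=\eta_1$, i.e.\ the two natural polar-set conditions \eqref{eq:FirstOrdForm3f}--\eqref{eq:FirstOrdForm3i}. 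This step plays the role that surjectivity of $(\gamma^+,\gamma^-)$ played in the continuous case, and being reversible it establishes equivalence of the variational problem with $U^h=A_\star U^h+G\xi+F$, $BU^h=\chi$.

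Finally, the bound follows from $\|U^h\|_{\mathbb V}^2=\|U^h\|_{\mathbb H}^2+\|A_\star U^h\|_{\mathbb H}^2$ by reading off the components of $A_\star U^h$ from the reconstruction formulas (components one, two, four and five) and from the resolvent identity $A_\star U^h=U^h-G\xi-F$ (the third), using the $h$-uniform bounds on $L_\Omega^h,L_N^h,L_D^h$. The main obstacle is not coercivity but the bookkeeping of the \emph{exotic} transmission conditions: one must pose the problem on the constrained space $\mathbb U_h$ so that the troublesome normal-trace pairings $\langle\gamma_\bnu^+\mathbf v^h,\cdot\rangle$ and $\langle\llbracket\gamma_\bnu\mathbf v^h\rrbracket,\cdot\rangle$ either annihilate the admissible test traces or are traded, via the natural conditions, for the bounded coupling $\langle\gamma\mathbf u^h\cdot\bnu,\llbracket\gamma w^h\rrbracket\rangle$, and then check that the admissible jumps and interior traces still sweep out all of $Y_h$ and $X_h^\circ$ so that nothing is lost in the recovery. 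Keeping every constant independent of the discrete spaces---in the lifting of $(\psi_1,\eta_2)$, and in the discrete operators $\mathrm{div}_h,\gamma_{\bnu,h}^t$ and $G$ feeding the third component of $A_\star U^h$---is exactly where the dual polar norms and the uniformly bounded right-inverse of $\gamma_D\colon V_h\to\gamma_D V_h$ are indispensable.
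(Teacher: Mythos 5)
Your proposal is correct and takes essentially the same approach as the paper's proof: uniqueness via linearity and \Cref{LemmaFEM1}; existence by Lax--Milgram on the constrained space $\mathbb U_0$ (your $\mathbb U_h$) with an $h$-uniform lifting of the essential data $(\psi_1,\eta_2)$, the same skew-coupled coercive bilinear form (with the identity $(\mathbf e L_\Omega^h\mathbf u^h,\varepsilon(\mathbf u^h))_{\Omega_-}=(\kappa_\psi L_\Omega^h\mathbf u^h,L_\Omega^h\mathbf u^h)_{\Omega_-}$ and Korn's inequality); then reconstruction of $(\mathbf v^h,\mathrm S^h,\mathbf r^h)$, recovery of the interior and discrete elastic equations, and of the natural polar-set conditions from the residual boundary pairing, followed by the $\mathbb V$-bound read off from the reconstruction formulas and the resolvent identity. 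The differences are presentational only: you impose the essential conditions directly in the trial space and justify the $h$-uniform lifting via Hahn--Banach extension plus continuous trace lifting, whereas the paper states the problem with the side condition $\breve\gamma u^h=(\psi_1,\eta_2)$ and cites \cite{LaSa2009} for the uniformly bounded right inverse of $\breve\gamma$.
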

\begin{proof}
Uniqueness follows from the linearity of $A_\star, B$, and $G$ and the calculation in \Cref{LemmaFEM1}.  To show existence of solutions, we work on an equivalent variational formulation.  In order to do that, we define the spaces
\begin{alignat*}{2}
\mathbb U &:= H^1(\mathbb R^d \setminus\Gamma) \times \mathbf V_h,\\
\mathbb U_0 &:= \{(w^h, \mathbf w^h) \in \mathbb U: (\llbracket \gamma w^h \rrbracket, \gamma^- w^h) \in Y_h \times X_h^\circ \}.
\end{alignat*}
and the trace operator 
\[
H^1(\mathbb R^d \setminus\Gamma)\ni u^h \longmapsto \breve\gamma u^h 
:= (\llbracket \gamma u^h \rrbracket \big|_{Y_h^\circ}, \gamma^- u^h \big|_{X_h})\in(Y_h^\circ)^* \times X_h^*.
\]
The operator $\breve \gamma$ is surjective and admits a bounded right inverse whose norm is independent of $h$ (see similar arguments in several proofs in \cite{LaSa2009}). We now define the bilinear form $a:\mathbb U \times \mathbb U \to \mathbb R$ and the linear form $\ell: \mathbb U \to \mathbb R$, where
\begin{alignat*}{3}
a((u^h, \mathbf u^h),(w, \mathbf w)) &:= (\kappa_1 u^h, w)_{\mathbb R^d \setminus\Gamma} + (\kappa_0 \nabla u^h, \nabla w)_{\mathbb R^d \setminus\Gamma}\\
&\qquad  + (\rho \mathbf u^h, \mathbf w)_{\Omega_-} + (\mathcal C \varepsilon(\mathbf u^h), \varepsilon(\mathbf w))_{\Omega_-} + (\mathbf e L_\Omega^h \mathbf u^h, \varepsilon (\mathbf w))_{\Omega_-}\\
&\qquad -\langle \gamma \mathbf w \cdot \bnu, \llbracket \gamma u^h \rrbracket \rangle + \langle \gamma \mathbf u^h \cdot \bnu, \llbracket \gamma w \rrbracket \rangle,\\
\ell((w, \mathbf w)) &:= (\kappa_1 f, w)_{\mathbb R^d \setminus\Gamma} - (\mathbf f, \nabla w)_{\mathbb R^d \setminus\Gamma} + (\rho \mathbf g, \mathbf w)_{\Omega_-} - (\mathrm G, \varepsilon(\mathbf w))_{\Omega_-}\\
&\qquad - (\mathbf e \mathbf h,\varepsilon (\mathbf w))_{\Omega_-} + \langle \gamma \mathbf w \cdot \bnu, \xi \rangle + \langle \psi_2, \gamma^- w \rangle + \langle \eta_1, \llbracket \gamma w \rrbracket \rangle.
\end{alignat*}
Now we have that the variational problem looking for $(u^h, \mathbf u^h) \in \mathbb U$ such that
\begin{subequations}
\begin{alignat}{2} \label{eq:VarEq3}
\breve\gamma u^h &= (\psi_1, \eta_2),\\
a((u^h, \mathbf u^h),(w, \mathbf w)) & = \ell((w, \mathbf w)) \qquad \forall (w, \mathbf w) \in \mathbb U_0,
\end{alignat}
\end{subequations}
is uniquely solvable, and 
\[
\|(u^h, \mathbf u^h)\|_{\mathbb U} \leq C_1 \|\ell\|_{\mathbb U_0^*} \leq C_2(\|F\|_{\mathbb H} + \|\Xi\|_{\mathbb M}),
\]
where $C_1$ and $C_2$ are constants independent of $h$.The three missing components of $U^h$ are defined as 
\[
\mathbf v^h:= \kappa_0 \nabla u^h + \mathbf f, \qquad \mathrm S^h := \mathcal C \varepsilon(\mathbf u^h) + \mathrm G, \qquad \mathbf r^h := L_\Omega^h \mathbf u^h + \mathbf h.
\]
With these new definitions, our variational problem becomes 
\begin{alignat}{4} \label{eq:VarProb3}
(\kappa_1 u^h, w)_{\mathbb R^d \setminus\Gamma} + (\mathbf v^h, \nabla w)_{\mathbb R^d \setminus\Gamma} + (\rho \mathbf u^h, \mathbf w)_{\Omega_-} \nonumber\\ 
 \quad + (\mathrm S^h + \mathbf e \mathbf r^h, \varepsilon(\mathbf w))_{\Omega_-} - \langle \gamma \mathbf w \cdot \bnu, \llbracket \gamma u^h \rrbracket \rangle \nonumber\\
 + \langle \gamma \mathbf u^h \cdot \bnu, \llbracket \gamma w \rrbracket \rangle &= (\kappa_1 f, w)_{\mathbb R^d \setminus\Gamma} + (\rho \mathbf g, \mathbf w)_{\Omega_-} + \langle \gamma \mathbf w \cdot \bnu, \xi \rangle \nonumber \\
 &\qquad+ \langle \psi_2, \gamma^- w \rangle + \langle \eta_1, \llbracket \gamma w \rrbracket \rangle. 
\end{alignat}
Using the definition of $\mathrm{div}_h$ and $\gamma_{\bnu,h}^t$ and rearranging some of the terms above, we arrive at
\begin{alignat}{3} \label{eq:VarRef3}
&(\kappa_1 (u^h - f), w)_{\mathbb R^d \setminus\Gamma} + (\mathbf v^h, \nabla w)_{\mathbb R^d \setminus\Gamma} \nonumber\\
&\quad + (\rho(u^h - \mathrm{div}_h (\mathrm S^h + \mathbf e \mathbf r^h) - \gamma_{\bnu,h}^t(\llbracket \gamma u^h \rrbracket + \xi) - \mathbf g), \mathbf w)_{\Omega_-} &= -\langle \gamma \mathbf u^h \cdot \bnu - \eta_1, \llbracket \gamma w \rrbracket \rangle \nonumber\\
& &  + \langle \psi_2, \gamma^- w \rangle.
\end{alignat}
Choosing $w = 0$, and testing with all $\mathbf w \in \mathbf V_h$ we see that 
\[
\mathbf u^h = \mathrm{div}_h( \mathrm S^h + \mathbf e \mathbf r^h) + \gamma_{\bnu, h}^t (\llbracket \gamma u^h \rrbracket + \xi) + \mathbf g.
\]
Substituting this definition of $\mathbf u^h$ into \eqref{eq:VarRef3} and testing with $w \in \mathcal D(\mathbb R^d \setminus\Gamma)$ we obtain
\[
(\kappa_1 (u^h - f), w)_{\mathbb R^d \setminus\Gamma} + (\mathbf v^h, \nabla w)_{\mathbb R^d \setminus\Gamma} = (\kappa_1 (u^h - f) - \nabla \cdot \mathbf v^h, w)_{\mathbb R^d \setminus\Gamma} = 0.
\]
This shows that $u^h = \kappa_1^{-1} \nabla \cdot \mathbf v^h + f$ and $\mathbf v^h \in \mathbf H(\mathrm{div}, \mathbb R^d \setminus\Gamma)$.  Making one final substitution into \eqref{eq:VarRef3}, testing with $(w, \mathbf w) \in \mathbb U_0$ and integrating by parts leads to
\begin{alignat*}{2}
0 &= \langle \gamma_\bnu^- \mathbf v^h, \gamma^- w \rangle - \langle \gamma_\bnu^+ \mathbf v^h, \gamma^+ w \rangle + \langle \gamma \mathbf u^h \cdot \bnu - \eta_1, \llbracket \gamma w \rrbracket \rangle - \langle \psi_2, \gamma^- w \rangle\\
 &= \langle \llbracket \gamma_\bnu \mathbf v^h \rrbracket - \psi_2, \gamma^- w \rangle + \langle \gamma \mathbf u^h \cdot \bnu + \gamma_\bnu^+ \mathbf v^h - \eta_2, \llbracket \gamma w \rrbracket \rangle.
\end{alignat*}
By the definition of $\mathbb U_0$, this gives us that 
\[
\llbracket \gamma_\bnu \mathbf v^h \rrbracket - \psi_2 \in X_h, \qquad \gamma \mathbf u^h \cdot \bnu + \gamma_\bnu^+ \mathbf v^h - \eta_2 \in Y_h^\circ,
\]
and we have the desired transmission problem. All of the arguments above can easily be reversed to see the equivalence of the two problems.  Arriving at the bound on $\|U^h\|_{\mathbb V}$ follows similarly as in the proof of \Cref{lemmaSolve1}.
\end{proof}

We now return to the main result of this section. 
 
\begin{proof}[Proof of \Cref{Results3}.]
The estimates on $\|u^h(t)\|_{1, \mathbb R^d \setminus\Gamma}, \|\psi^h(t)\|_{1, \Omega_-}$, and $\|\mathbf u^h(t)\|_{1, \Omega_-}$ are found similarly to those in \Cref{results1}.  To achieve the bound on $\|\varphi^h(t)\|_{1/2, \Gamma}$, we note that
\[
\|\varphi^h(t)\|_{1/2, \Gamma} = \|\llbracket \gamma u^h \rrbracket (t) \|_{1/2, \Gamma} \lesssim \|u^h(t)\|_{1, \mathbb R^d \setminus\Gamma},
\]
from which the bound follows.  Finally, for the estimate on $\|\lambda^h(t)\|_{-1/2, \Gamma}$, we recall that
\begin{alignat*}{3}
\|\lambda^h(t) \|_{-1/2, \Gamma} = \|\llbracket \partial_\bnu u^h \rrbracket(t)\|_{-1/2, \Gamma} = \|\kappa_0^{-1} \llbracket \gamma_\bnu \dot{\mathbf v}^h\rrbracket (t)\|_{-1/2, \Gamma} &\lesssim \|\dot{\mathbf v}(t)\|_{\mathrm{div}, \mathbb R^d \setminus\Gamma}\\
&\lesssim \|\dot{\mathbf v}(t)\|_{\mathbb R^d \setminus\Gamma} + \|\nabla \cdot \dot{\mathbf v}(t)\|_{\mathbb R^d \setminus\Gamma}\\
& \lesssim \|\dot{U}(t)\|_{\mathbb H} + \|\ddot{U}(t)\|_{\mathbb H}.
\end{alignat*}
Now we need only to appeal to \Cref{bigTheorem} (b) to obtain the desired result. 
\end{proof}

\section{Approximation properties} \label{EE}

The goal of this section is the study of the difference between the solution of \eqref{eq:12} (what we called a semidiscrete solution) and a discrete projection acting on the solution of \eqref{eq:SecOrdForm1}.  This study is in essence an analysis of the semidiscretization of \eqref{eq:SecOrdForm1} using \eqref{eq:16}.  We will show that the error (as defined below) is driven by an evolutionary equation of the form \eqref{eq:modelProb2}.  Let $U(t) = (u, \mathbf v, \mathbf u, \mathrm S, \mathbf r)(t)$ be the solution of \eqref{eq:FirstOrdForm} and 
\begin{alignat*}{2}
\varphi(t) &:=  \llbracket \gamma u \rrbracket(t)  = -\gamma^+ u(t):[0, \infty) \to  H^{1/2}(\Gamma),\\
\lambda(t) &:= \llbracket \partial_\bnu u \rrbracket(t) = - \partial_\bnu^+ u(t) = - \kappa_0^{-1} \gamma_\bnu^+ \dot{\mathbf v}(t):[0, \infty) \to H^{-1/2}(\Gamma). 
\end{alignat*}
Consider the finite dimensional spaces $\mathbf V_h, V_h, X_h$, and $Y_h$. In this setting, the solution to the problem \eqref{eq:12}  will be denoted $U^h(t) = (u^h, \mathbf v^h, \mathbf u^h, \mathrm S^h, \mathbf r^h)(t)$ with  $\varphi^h(t) = \llbracket \gamma u^h \rrbracket (t)$ and $\lambda^h(t) = \llbracket \partial_\bnu u^h \rrbracket(t)$. Following \cite{SaSa2016} we introduce an `elliptic projection' tailored to the coupled elliptic system inherent to our problem.  Due to the lack of Dirichlet conditions or mass terms in the elastic part of the bilinear form, we need to consider the finite dimensional space of rigid motions
\[
\mathbf M:= \{\mathbf m \in \mathbf H^1(\Omega_-) : (\mathcal C \varepsilon(\mathbf m), \varepsilon (\mathbf m))_{\Omega_-} = 0\}, 
\]
which we assume to be a subspace of $\mathbf V_h$. We then consider the operator
\[
\mathbf H^1(\Omega_-)\times H^1(\Omega_-)\times \gamma_D V_h 
\ni (\mathbf u,\psi,\mu^h) \longmapsto (\mathbf P_h\mathbf u,P_h \psi)\in \mathbf V_h\times V_h,
\]
given by the unique solution of the coercive problem:
\begin{alignat*}{4}
\gamma_D P_h \psi &= \mu^h,\\
(\mathcal C \varepsilon(\mathbf P_h \mathbf u - \mathbf u), \varepsilon(\mathbf w))_{\Omega_-} + (\mathbf e \nabla (P_h \psi - \psi), \varepsilon(\mathbf w))_{\Omega_-} &= 0 & \;\; & \forall \mathbf w \in \mathbf V_h,\\
 - (\varepsilon(\mathbf P_h \mathbf u - \mathbf u), \mathbf e \nabla \phi)_{\Omega_-} + (\kappa_\psi \nabla (P_h \psi - \psi ), \nabla \phi)_{\Omega_-} &= 0 & &\forall \phi  \in V_{h,D},\\
(\mathbf P_h \mathbf u - \mathbf u, \mathbf m)_{\Omega_-} &= 0 &  &\forall \mathbf m \in \mathbf M.
\end{alignat*}
The above problem defining $(\mathbf P_h \mathbf u, P_h \psi) \in \mathbf V_h \times V_h$ is uniquely solvable since a straightforward application of Poincar\'{e}'s and Korn's inequalities guarantees coercivity when $\mathbf M\subset \mathbf V_h$. Using a uniformly bounded lifting of $\gamma_D$ (see \cite{DoSa2003} and \cite[Section 5]{Sayas2007} to see why this is actually a necessity), we can then prove that
\begin{equation}\label{eq:ellproj}
\|\mathbf u - \mathbf P_h \mathbf u\|_{1, \Omega_-} + \|\psi - P_h \psi\|_{1, \Omega_-} \lesssim   \|\mathbf u - \mathbf I_h\mathbf u\|_{1, \Omega_-} + \|\psi - I_h \psi\|_{1, \Omega_-} 
 + \|\gamma_D \psi - \mu^h \|_{1/2,\Gamma_D},
\end{equation}
where $\mathbf I_h : \mathbf H^1(\Omega_-) \to \mathbf V_h$ and $I_h: H^1(\Omega_-) \to V_h$ are the  best approximation operators on $\mathbf V_h$ and $V_h$ respectively. The hidden constant in the above inequality depends only on the physical parameters (through boundedness and coercivity inequalities) and on the uniform bound for the best possible lifting of the discrete trace.
Notice that the notation of the projection is slightly misleading. In fact, $\mathbf P_h \mathbf u$ and $P_h\psi$ both depend not only on $\mathbf u$ and $\psi$, but also on the given discrete trace $\mu^h$. We will keep the given notation for the sake of simplicity.

We also define the operators $\Pi_h^Y: H^{1/2}(\Gamma) \to Y_h$ and $\Pi_h^X: H^{-1/2}(\Gamma) \to X_h$ as the best approximation operators into $Y_h$ and $X_h$ respectively, and the error quantities as follows
\begin{alignat*}{4}
e_u^h &:= u - u^h, &\qquad   \mathbf e_{\mathbf v}^h &:= \mathbf v - \mathbf v^h,\\
\mathbf e_{\mathbf u}^h &:= \mathbf P_h \mathbf u - \mathbf u^h & \mathrm e_{\mathrm S}^h &:= \partial^{-1} \mathcal C \varepsilon(\mathbf P_h \mathbf u) - \mathrm S^h \qquad \mathbf e_{\mathbf r}^h:= \partial^{-1} L_\Omega^h(\mathbf P_h \mathbf u) - \mathbf r^h.
\end{alignat*}
We denote $E^h:= (e_u^h, \mathbf e_{\mathbf v}^h, \mathbf e_{\mathbf u}^h, \mathrm e_{\mathrm S}^h, \mathbf e_{\mathbf r}^h)$ for brevity. Therefore 
\[
\llbracket \gamma e_u^h \rrbracket = \varphi - \varphi^h, \qquad \llbracket \gamma_\bnu \mathbf e_{\mathbf v}^h \rrbracket = \kappa_0 \partial^{-1} (\lambda - \lambda^h).
\]
The problem of looking for $E^h \in \mathcal C^1([0, \infty), \mathbb H) \cap \mathcal C([0, \infty), \mathbb V)$ which for each $t \ge 0$ satisfies 
\begin{subequations} \label{eq:error1}
\begin{alignat}{3}
\dot{E}^h(t) &= A_\star E^h(t) + (0, \; \mathbf 0, \; (\mathbf P_h \dot{\mathbf u} - \dot{\mathbf u})(t),\;  0,\;  L_D^h(\mu - \mu^h)(t)) ,\\
BE^h(t) &= ((\varphi - \Pi_h^Y \varphi)(t)\big|_{Y_h^\circ}, \; \kappa_0 \partial^{-1}(\lambda - \Pi_h^X \lambda)(t)\big|_{X_h^\circ}, \; (\gamma (\mathbf P_h \mathbf u - \mathbf u) \cdot \bnu(t))\big|_{Y_h}, \; 0),
\end{alignat}
with vanishing initial condition
\begin{equation}
E^h(0) = 0,
\end{equation}
\end{subequations}
can be covered by the abstract framework of \Cref{framework} by letting 
\begin{alignat*}{2}
F &= (0, \; \mathbf 0, \; \mathbf P_h \dot{\mathbf u} - \dot{\mathbf u},\; 0,\; L_D^h(\mu - \mu^h)),\\
G &\equiv 0,\\
\Xi &= (0,\; ((\varphi - \Pi_h^Y \varphi)\big|_{Y_h^\circ}, \; \kappa_0 \partial^{-1}(\lambda - \Pi_h^X \lambda)\big|_{X_h^\circ}, \; (\gamma (\mathbf P_h \mathbf u - \mathbf u) \cdot \bnu)\big|_{Y_h} \; 0)).
\end{alignat*}
Before stating our error estimates, we introduce the following useful lemma.

\begin{lemma} \label{Strang}
For $(\mathbf u, \mathbf u^h, \mu, \mu^h, \eta)  \in H^1(\Omega_-) \times \mathbf V_h \times H^{1/2}(\Gamma_D) \times \gamma_D V_h \times H^{-1/2}(\Gamma_N)$, let $\psi \in H^1(\Omega_-)$ and $\psi^h \in V_h$ be given by
\begin{subequations} \label{eq:psiCont}
\begin{alignat}{3}
\gamma_D \psi &= \mu,\\
(\kappa_\psi \nabla \psi, \nabla \phi)_{\Omega_-} &= (\varepsilon(\mathbf u), \mathbf e \nabla \phi)_{\Omega_-} - \langle \eta, \gamma \phi \rangle_{\Gamma_N} \qquad \forall \phi \in H_D^1(\Omega_-),
\end{alignat}
\end{subequations}
and 
\begin{subequations} \label{eq:psiSD}
\begin{alignat}{3}
\gamma_D \psi^h &= \mu^h,\\
(\kappa_\psi \nabla \psi^h, \nabla \phi^h)_{\Omega_-} &= (\varepsilon(\mathbf u^h), \mathbf e \nabla \phi^h)_{\Omega_-} - \langle \eta, \gamma \phi^h \rangle_{\Gamma_N} \qquad \forall \phi^h \in V_{h,D},
\end{alignat}
\end{subequations}
respectively.  Then 
\[
\|\psi - \psi^h\|_{1, \Omega_-} \lesssim \|\psi - I_h \psi \|_{1, \Omega_-} + \|\mu - \mu^h\|_{1/2, \Gamma_D} + \|\varepsilon(\mathbf u) - \varepsilon(\mathbf u^h)\|_{\Omega_-}.
\]
\end{lemma}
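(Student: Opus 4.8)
The plan is to treat this as a Strang-type estimate for the elliptic problem governed by the bilinear form $a(\psi,\phi):=(\kappa_\psi\nabla\psi,\nabla\phi)_{\Omega_-}$. Coercivity of $a$ on $H_D^1(\Omega_-)$ is immediate: the uniform positive definiteness of $\kappa_\psi$ gives $a(\phi,\phi)\ge d_0\|\nabla\phi\|_{\Omega_-}^2$, and since $\Gamma_D$ is non-trivial a Poincar\'e--Friedrichs inequality upgrades this to $a(\phi,\phi)\gtrsim\|\phi\|_{1,\Omega_-}^2$ for $\phi\in H_D^1(\Omega_-)$; boundedness of $a$ follows from $\kappa_\psi\in L^\infty(\Omega_-)$. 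The key observation is that both problems \eqref{eq:psiCont} and \eqref{eq:psiSD} share the same Neumann datum $\eta$, so the only two sources of inconsistency are the mismatch of the Dirichlet data ($\mu$ versus $\mu^h$) and the mismatch of the elastic source ($\mathbf u$ versus $\mathbf u^h$).

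First I would build a conforming comparison function carrying the correct discrete trace. Let $\ell_h:\gamma_DV_h\to V_h$ be the $h$-uniformly bounded right inverse of $\gamma_D:V_h\to\gamma_D V_h$ assumed in Section~\ref{Semidiscrete}, and set $\psi_I:=I_h\psi+\ell_h(\mu^h-\gamma_D I_h\psi)\in V_h$, so that $\gamma_D\psi_I=\mu^h=\gamma_D\psi^h$ and hence $\psi_I-\psi^h\in V_{h,D}$. Testing \eqref{eq:psiCont} and \eqref{eq:psiSD} against the common admissible function $\phi^h:=\psi_I-\psi^h\in V_{h,D}\subset H_D^1(\Omega_-)$ and subtracting, the terms $\langle\eta,\gamma\phi^h\rangle_{\Gamma_N}$ cancel and one obtains the quasi-orthogonality $a(\psi-\psi^h,\phi^h)=(\varepsilon(\mathbf u)-\varepsilon(\mathbf u^h),\mathbf e\nabla\phi^h)_{\Omega_-}$. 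Writing $a(\psi_I-\psi^h,\phi^h)=a(\psi_I-\psi,\phi^h)+a(\psi-\psi^h,\phi^h)$ and invoking coercivity with $\phi^h=\psi_I-\psi^h$, after dividing by $\|\psi_I-\psi^h\|_{1,\Omega_-}$ and using the boundedness of $a$ and of multiplication by $\mathbf e$, this yields $\|\psi_I-\psi^h\|_{1,\Omega_-}\lesssim\|\psi-\psi_I\|_{1,\Omega_-}+\|\varepsilon(\mathbf u)-\varepsilon(\mathbf u^h)\|_{\Omega_-}$.

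It then remains to estimate $\|\psi-\psi_I\|_{1,\Omega_-}$ and to close the argument via the triangle inequality $\|\psi-\psi^h\|_{1,\Omega_-}\le\|\psi-\psi_I\|_{1,\Omega_-}+\|\psi_I-\psi^h\|_{1,\Omega_-}$. From the definition of $\psi_I$ one has $\psi-\psi_I=(\psi-I_h\psi)-\ell_h(\mu^h-\gamma_DI_h\psi)$; using the uniform bound on $\ell_h$ and splitting $\mu^h-\gamma_DI_h\psi=(\mu^h-\mu)+\gamma_D(\psi-I_h\psi)$ together with the boundedness of the trace $\gamma_D$ gives $\|\psi-\psi_I\|_{1,\Omega_-}\lesssim\|\psi-I_h\psi\|_{1,\Omega_-}+\|\mu-\mu^h\|_{1/2,\Gamma_D}$, which is exactly the claimed bound. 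The only genuinely delicate point is the non-matching Dirichlet data: the comparison function $\psi_I$ must reproduce the discrete trace $\mu^h$ \emph{exactly} while remaining close to $\psi$ up to $\|\psi-I_h\psi\|_{1,\Omega_-}+\|\mu-\mu^h\|_{1/2,\Gamma_D}$, and this is precisely where the $h$-uniformly bounded right inverse of $\gamma_D$ is indispensable; everything else is a routine coercivity and consistency estimate.
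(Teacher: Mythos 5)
Your proof is correct, and it rests on the same two pillars as the paper's argument --- coercivity of $(\kappa_\psi\nabla\cdot,\nabla\cdot)_{\Omega_-}$ on $H^1_D(\Omega_-)$ and the $h$-uniformly bounded right inverse of $\gamma_D$ --- but the decomposition is genuinely different. The paper introduces an auxiliary \emph{discrete solution} $\widetilde{\psi}^h\in V_h$, obtained by solving the discrete variational problem with the continuous elastic source $\varepsilon(\mathbf u)$ but the discrete Dirichlet datum $\mu^h$ (in the paper's notation, $\nabla\widetilde{\psi}^h = L^h_\Omega\mathbf u + L^h_N\eta + L^h_D\mu^h$); it then bounds $\|\psi-\widetilde{\psi}^h\|_{1,\Omega_-}$ by a C\'ea-type quasi-optimality estimate and $\|\widetilde{\psi}^h-\psi^h\|_{1,\Omega_-}$ by citing Strang's First Lemma, since those two discrete problems differ only in the source term $\varepsilon(\mathbf u)$ versus $\varepsilon(\mathbf u^h)$. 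You instead pivot around a \emph{trace-corrected interpolant} $\psi_I = I_h\psi + \ell_h(\mu^h-\gamma_D I_h\psi)$, which requires no auxiliary problem to be solved, and you absorb both error sources (the Dirichlet mismatch and the elastic-source mismatch) into a single coercivity step via the quasi-orthogonality identity $a(\psi-\psi^h,\phi^h)=(\varepsilon(\mathbf u)-\varepsilon(\mathbf u^h),\mathbf e\nabla\phi^h)_{\Omega_-}$, valid for all $\phi^h\in V_{h,D}$ because the Neumann terms cancel. In effect you have inlined, in a self-contained way, the proofs of the two standard results the paper invokes: the paper's route is shorter on the page, while yours makes explicit exactly where the right inverse $\ell_h$ and the Neumann cancellation enter, and it verifies the $h$-uniformity of every constant without appeal to external lemmas.
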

\begin{proof}
Recalling the definitions of the various $L$ and $L^h$ operators above, we can write $\nabla \psi = L_\Omega \mathbf u + L_N \eta + L_D \mu$ and $\nabla \psi^h = L_\Omega^h \mathbf u^h + L_N^h \eta + L_D^h \mu^h$.  We introduce the intermediate quantity $\widetilde{\psi}^h \in V_h$ such that $\nabla \widetilde{\psi}^h = L_\Omega^h \mathbf u + L_N^h \eta + L_D^h \mu^h$.  Since $\gamma_D$ admits a uniformly bounded right-inverse, we have 
\[
\|\psi - \widetilde{\psi}^h\|_{1, \Omega_-} \lesssim  \|\psi - I_h \psi \|_{1, \Omega_-} + \|\mu - \mu^h\|_{1/2, \Gamma_D}.
\]
On the other hand, Strang's First Lemma (cf \cite[Lemma III.1.1]{Braess2007}) gives us 
\[
\|\widetilde{\psi}^h - \psi^h\|_{1, \Omega} \lesssim \sup_{\phi^h \in V_{h,D}} \frac{|(\varepsilon(\mathbf u) - \varepsilon(\mathbf u^h), \mathbf e \nabla \phi^h)|}{\|\phi_h\|_{1, \Omega_-}} \lesssim \|\varepsilon(\mathbf u) - \varepsilon(\mathbf u^h)\|_{\Omega_-}.
\]
Putting these two bounds together, the result follows.
\end{proof}

Now using \Cref{bigTheorem}, we obtain the following semidiscrete error estimates.

\begin{theorem} \label{errEst1}
For $(\mathbf u,\psi, \phi, \lambda, \mu, \mu^h) \in W^{\ell + 1}(\mathbf H^1(\Omega_-)) \times W^{\ell +1} (H^1(\Omega_-)) \times W^{\ell + 1}(H^{1/2}(\Gamma))\times W^\ell(H^{-1/2}(\Gamma))\times W^{\ell+1}(H^{1/2}(\Gamma_D)) \times W^{\ell+1}(\gamma_D V_h)$, with $\ell = 1$, the problem \eqref{eq:error1} is uniquely solvable and for all $t \geq 0$, its solution satisfies
\begin{alignat*}{6}
\|(u - u^h)(t)\|_{1, \mathbb R^d \setminus\Gamma} &+ \|(\mathbf u - \mathbf u^h)(t)\|_{1, \Omega_-} \\
\|\psi - \psi^h\|_{1,\Omega_-} & + \|(\varphi - \varphi^h)(t)\|_{1/2, \Gamma} &&\lesssim H_2(\varphi - \Pi_h^Y \varphi, t | H^{1/2}(\Gamma))\\
& && \quad + H_2(\partial^{-1}(\lambda - \Pi_h^X \lambda), t |H^{-1/2}(\Gamma))\\
& &&\quad  + H_2(\mathbf u -\mathbf I_h \mathbf u  , t | \mathbf H^1(\Omega_-))\\
& && \quad+ H_2(\psi - I_h \psi, t|H^1(\Omega_-)),\\
& && \quad+ H_2(\mu - \mu^h, t|H^{1/2}(\Gamma_D)),
\end{alignat*}
Furthermore, if $\ell = 2$, then we can also bound
\begin{alignat*}{3}
\|(\lambda - \lambda^h)(t) \|_{-1/2, \Gamma} &\lesssim H_3(\varphi - \Pi_h^Y \varphi, t | H^{1/2}(\Gamma))+ H_2(\lambda - \Pi_h^X \lambda, t | H^{-1/2}(\Gamma))\\
& \quad + H_3(\mathbf u-\mathbf I_h \mathbf u , t | \mathbf H^1(\Omega_-)) + H_3(\psi- I_h \psi , t | H^1(\Omega_-))\\
& \quad + H_3(\mu - \mu^h, t|H^{1/2}(\Gamma_D)).
\end{alignat*}
\end{theorem}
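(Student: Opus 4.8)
The plan is to apply the abstract stability theorem \Cref{bigTheorem} directly to the error system \eqref{eq:error1}, which has already been recast in the form $\dot E^h = A_\star E^h + F$, $BE^h = \Xi$, $E^h(0)=0$ with the source $F$ and boundary data $\Xi$ displayed above. Since $A_\star$, $B$, and the involution $T$ are literally the same operators as in the semidiscrete problem of \Cref{SDWP}, the structural hypotheses were already verified in \Cref{LemmaFEM1}, \Cref{LemmaT3}, and \Cref{LemmaSolve3} and remain in force. Thus unique solvability of \eqref{eq:error1} is immediate, and \Cref{bigTheorem}(a) furnishes a bound of the same shape as in the proof of \Cref{results1}, namely $\|E^h(t)\|_{\mathbb V}\lesssim \|E^h(t)\|_{\mathbb H}+\|\dot E^h(t)\|_{\mathbb H}+\|L_D^h(\mu-\mu^h)(t)\|_{\Omega_-}$, with the first two summands controlled by $H_2$-quantities of $\Xi$ and $H_1$-quantities of $F$ (the latter absorbed into the uniform $H_2$ indices below).

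Next I would read off the physical error norms from $\|E^h(t)\|_{\mathbb V}$. The first slot of the $\mathbb V$-norm gives $\|(u-u^h)(t)\|_{1,\mathbb R^d\b\Gamma}=\|e_u^h(t)\|_{1,\mathbb R^d\b\Gamma}$ at once, and since $\varphi-\varphi^h=\llbracket\gamma e_u^h\rrbracket$, continuity of the trace yields $\|(\varphi-\varphi^h)(t)\|_{1/2,\Gamma}\lesssim\|e_u^h(t)\|_{1,\mathbb R^d\b\Gamma}$. For the displacement I use that $\|A_\star E^h\|_{\mathbb H}$ controls $\|\mathcal C\varepsilon(\mathbf e_{\mathbf u}^h)\|_{\Omega_-}$, hence $\|\varepsilon(\mathbf e_{\mathbf u}^h)\|_{\Omega_-}$ by coercivity of $\mathcal C$, so Korn's inequality combined with the $\mathbf L^2$-control of $\mathbf e_{\mathbf u}^h$ produces $\|\mathbf e_{\mathbf u}^h\|_{1,\Omega_-}$; then the triangle inequality $\|\mathbf u-\mathbf u^h\|_{1,\Omega_-}\le\|\mathbf u-\mathbf P_h\mathbf u\|_{1,\Omega_-}+\|\mathbf e_{\mathbf u}^h\|_{1,\Omega_-}$ together with the elliptic-projection estimate \eqref{eq:ellproj} delivers the best-approximation terms.

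The electric potential must be treated separately, since $\psi^h$ is the genuine semidiscrete field and not $P_h\psi$: here \Cref{Strang} bounds $\|\psi-\psi^h\|_{1,\Omega_-}$ by $\|\psi-I_h\psi\|_{1,\Omega_-}$, $\|\mu-\mu^h\|_{1/2,\Gamma_D}$, and $\|\varepsilon(\mathbf u)-\varepsilon(\mathbf u^h)\|_{\Omega_-}$, the last of which is already controlled by the displacement step, so no circularity arises as long as $\psi-\psi^h$ is estimated last. The pointwise-in-time quantities produced by this elliptic estimate are converted to the integrated $H_k$ form via the vanishing initial data and $\|g(t)\|\le\int_0^t\|\dot g(\tau)\|\,\mathrm d\tau$, exactly as in \Cref{results1}. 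Finally the data are routed into best-approximation quantities: the boundary pieces $(\varphi-\Pi_h^Y\varphi)\big|_{Y_h^\circ}$ and $\kappa_0\partial^{-1}(\lambda-\Pi_h^X\lambda)\big|_{X_h^\circ}$ are dominated by $\|\varphi-\Pi_h^Y\varphi\|_{1/2,\Gamma}$ and $\|\partial^{-1}(\lambda-\Pi_h^X\lambda)\|_{-1/2,\Gamma}$, while the trace piece $\gamma(\mathbf P_h\mathbf u-\mathbf u)\cdot\bnu\big|_{Y_h}$ and the source pieces $\mathbf P_h\dot{\mathbf u}-\dot{\mathbf u}$ and $L_D^h(\mu-\mu^h)$ are bounded, through \eqref{eq:ellproj} and the $h$-uniform boundedness of $L_D^h$, by interpolation errors of $\mathbf u,\psi$ and by $\|\mu-\mu^h\|_{1/2,\Gamma_D}$ (using that the time-independent elliptic projection commutes with $\partial_t$, so $\mathbf P_h\dot{\mathbf u}-\dot{\mathbf u}$ is controlled by interpolation errors of $\dot{\mathbf u},\dot\psi$ and $\|\gamma_D\dot\psi-\dot\mu^h\|_{1/2,\Gamma_D}$). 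For $\ell=2$, the $\lambda-\lambda^h$ bound mirrors the $\lambda^h$-estimate of \Cref{Results3}: from $\lambda-\lambda^h=\kappa_0^{-1}\llbracket\gamma_\bnu\dot{\mathbf e}_{\mathbf v}^h\rrbracket$ one gets $\|(\lambda-\lambda^h)(t)\|_{-1/2,\Gamma}\lesssim\|\dot E^h(t)\|_{\mathbb H}+\|\ddot E^h(t)\|_{\mathbb H}$, and \Cref{bigTheorem}(b) converts this into the $H_3/H_2$ terms.

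The routine part is the abstract invocation; the genuine obstacle is the bookkeeping that translates $F$ and $\Xi$ into best-approximation quantities carrying the correct number of time derivatives. In particular, the interplay between the elliptic projection (which enters both $F$ and $\Xi$ and depends on the discrete datum $\mu^h$), its commutation with $\partial_t$, and the separate Strang estimate for $\psi-\psi^h$ is where care is required to keep every constant $h$-independent and to make the regularity indices $W^{\ell+1}$/$W^\ell$ line up; this rests squarely on the $h$-uniform right inverse of $\gamma_D$ assumed throughout.
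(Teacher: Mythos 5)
Your proposal is correct and follows essentially the same route as the paper's proof: invoke \Cref{bigTheorem} on the error system \eqref{eq:error1} (whose operators $A_\star$, $B$, $T$ coincide with those of the semidiscrete problem, so Lemmas \ref{LemmaFEM1}, \ref{LemmaT3}, and \ref{LemmaSolve3} carry over), split $\mathbf u-\mathbf u^h$ through the elliptic projection and convert to best-approximation terms via \eqref{eq:ellproj}, estimate $\psi-\psi^h$ last using \Cref{Strang}, and obtain the $\lambda-\lambda^h$ bound for $\ell=2$ exactly as the $\lambda^h$ bound in \Cref{Results3} via \Cref{bigTheorem}(b). Your additional details (Korn's inequality, commutation of the projection with $\partial_t$, the dual-norm bounds on the restricted boundary data) are finer-grained bookkeeping of steps the paper leaves implicit, not a different argument.
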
 

\begin{proof}
First, we note that 
\begin{alignat*}{3}
\|(u - u^h)(t)\|_{1, \mathbb R^d \setminus\Gamma} + \|(\mathbf u - \mathbf u^h)(t)\|_{1, \Omega_-} \leq & \|(u - u^h)(t)\|_{1, \mathbb R^d \setminus\Gamma} + \|(\mathbf u - \mathbf P_h \mathbf u)(t)\|_{1, \Omega_-}\\
& + \|(\mathbf P_h \mathbf u  - \mathbf u^h)(t)\|_{1, \Omega_-}\\
\lesssim & \|E^h(t)\|_{\mathbb H} + \|\dot{E}^h(t)\|_{\mathbb H}\\
& + H_1(\mathbf u - \mathbf P_h \mathbf u , t| H^1(\Omega_-)).
\end{alignat*}
Noting that $\|(\varphi - \varphi^h)(t)\|_{1/2, \Gamma} \lesssim \|(u - u^h)(t)\|_{1, \mathbb R^d \setminus\Gamma}$, and appealing to \Cref{bigTheorem}, we arrive at our first estimate after using \eqref{eq:ellproj} to change the joint elliptic projection by the best approximation operators. \Cref{Strang} gives us the bound 
\begin{alignat*}{3}
\|(\psi - \psi^h)(t)\|_{1, \Omega_-} &\lesssim  \|(\psi-I_h \psi)(t)\|_{1, \Omega_-} + \|(\mu - \mu^h)(t)\|_{1/2, \Gamma_D} + \|\varepsilon(\mathbf u) - \varepsilon(\mathbf u^h)\|_{\Omega_-}\\
&\lesssim \|(\psi-I_h \psi)(t)\|_{1, \Omega_-} + \|(\mu - \mu^h)(t)\|_{1/2, \Gamma_D}  + \|\mathbf u - \mathbf u^h\|_{1, \Omega_-},
\end{alignat*}
from which the estimate of the statement follows. 
The estimate on $\|(\lambda - \lambda^h)(t)\|_{-1/2, \Gamma}$ is found similarly to the bound on $\|\lambda^h(t)\|_{-1/2, \Gamma}$ in \Cref{Results3} by appealing to \Cref{bigTheorem}(b). 
\end{proof}

\section{Some additional issues}  \label{AddIss}

\subsection{Purely acoustic-elastic coupling}
 We now proceed to analyze the wave-structure interaction problem where $\Omega_-$ does not possess any piezoelectric properties.  The following theorem shows that this problem is a specific case of problem  \eqref{eq:SecOrdForm1}.
\begin{theorem} \label{equiv1}
Suppose that $\mathbf e \equiv \mathbf 0, \; \Gamma_N = \emptyset,$ (so that $\Gamma = \Gamma_D$), and $\mu(t) = 0$ for each $t \geq 0$. Then the problem \eqref{eq:SecOrdForm1} is equivalent to the problem of looking for 
\begin{alignat*}{3}
u:[0, \infty) &\to \{w \in H^1(\Omega_+): \kappa_0 \nabla w \in \mathbf H(\mathrm{div}, \Omega_+)\}, \\
\mathbf u:[0, \infty) &\to \{\mathbf w \in \mathbf H^1(\Omega_-): \mathcal C \varepsilon (\mathbf w) \in \mathrm H_\mathrm{sym}(\mathrm{div}, \Omega_-)\},
\end{alignat*}
which for all $t \ge 0$ satisfy
\begin{subequations} \label{eq:SecOrdForm2}
\begin{alignat}{6}
\kappa_1 \ddot{u}(t)&= \nabla \cdot \left(\kappa_0 \nabla u \right)(t), \label{eq:SecOrdForm2a}\\
\rho\, \ddot{\mathbf{u}}(t) &= \mathrm{div} \; \mathcal C\varepsilon(\mathbf{u})(t), \label{eq:SecOrdForm2b}\\ 
\gamma_{\bnu}^+(\kappa_0 \nabla u)(t) + \beta_1(t) + \gamma^- \dot{\mathbf{u}}(t) \cdot \boldsymbol{\nu} & = 0, \label{eq:SecOrdForm2c}\\
(\gamma^+\dot{u}(t) + \dot{\beta}_0(t))\boldsymbol{\nu} + \bgamma_{\bnu}^- \mathcal C\varepsilon(\mathbf{u})(t) & =
	\mathbf{0},\label{eq:SecOrdForm2d}\\
u(0) = 0, \quad \dot{u}(0) = 0, \quad \mathbf{u}(0) &= \mathbf{0}, \quad \dot{\mathbf{u}}(0) = \mathbf{0}, \label{eq:SecOrdForm2e}
\end{alignat}
\end{subequations}
with the added unknown $\psi: [0, \infty) \to H_0^1(\Omega_-)$ such that $\psi(t) = 0$ for each $t \geq 0$. 
\end{theorem}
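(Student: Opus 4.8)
The plan is to prove the equivalence by establishing both implications, with the entire content concentrated in a single observation: under the three hypotheses the electric potential $\psi$ decouples from the coupled dynamics and is forced to vanish. I would begin by recording the effect of $\mathbf e\equiv\mathbf 0$ on \eqref{eq:SecOrdForm1}. The term $\mathbf e\nabla\psi$ drops out of the elastic equation \eqref{eq:SecOrdForm1b} and of the stress transmission condition \eqref{eq:SecOrdForm1e}, turning them into \eqref{eq:SecOrdForm2b} and \eqref{eq:SecOrdForm2d}; the acoustic equation \eqref{eq:SecOrdForm1a}, the velocity transmission condition \eqref{eq:SecOrdForm1d}, and the initial conditions \eqref{eq:SecOrdForm1h} are untouched and coincide verbatim with \eqref{eq:SecOrdForm2a}, \eqref{eq:SecOrdForm2c}, and \eqref{eq:SecOrdForm2e}.

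For the forward implication, suppose $(u,\psi,\mathbf u)$ solves \eqref{eq:SecOrdForm1}. With $\mathbf e\equiv\mathbf 0$, Gauss' law \eqref{eq:SecOrdForm1c} reduces to $\nabla\cdot(\kappa_\psi\nabla\psi(t))=0$; the Neumann condition \eqref{eq:SecOrdForm1f} is vacuous since $\Gamma_N=\emptyset$, and \eqref{eq:SecOrdForm1g} with $\mu\equiv 0$ and $\Gamma=\Gamma_D$ forces $\gamma\psi(t)=0$, that is, $\psi(t)\in H^1_0(\Omega_-)$. Pairing \eqref{eq:SecOrdForm1c} with $\psi(t)$ and integrating by parts (the boundary term vanishing by the homogeneous trace) gives $(\kappa_\psi\nabla\psi(t),\nabla\psi(t))_{\Omega_-}=0$, whence the uniform positive definiteness of $\kappa_\psi$ yields $\nabla\psi(t)=\mathbf 0$ and a Poincar\'{e} inequality forces $\psi(t)=0$ for all $t\ge 0$. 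This elliptic uniqueness is the only step that requires an argument, and it is immediate; it is what I would regard as the crux of the proof. With $\psi\equiv 0$ the surviving equations are precisely \eqref{eq:SecOrdForm2}, so $(u,\mathbf u)$ solves the reduced problem and the asserted $\psi\equiv 0$ holds.

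For the converse, given a solution $(u,\mathbf u)$ of \eqref{eq:SecOrdForm2} I would set $\psi\equiv 0$ and verify that $(u,0,\mathbf u)$ solves \eqref{eq:SecOrdForm1}. Every term containing $\mathbf e$ or $\psi$ vanishes, so \eqref{eq:SecOrdForm1a}, \eqref{eq:SecOrdForm1b}, \eqref{eq:SecOrdForm1d}, \eqref{eq:SecOrdForm1e} and the initial conditions are inherited from \eqref{eq:SecOrdForm2}; \eqref{eq:SecOrdForm1c} becomes $\nabla\cdot(\kappa_\psi\nabla 0)=0$, \eqref{eq:SecOrdForm1f} is vacuous, and \eqref{eq:SecOrdForm1g} reads $\gamma_D 0=0=\mu$. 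It remains to confirm the regularity $(0,\mathbf u)\in\mathbf D(\Omega_-)$: with $\mathbf e\equiv\mathbf 0$ and $\psi=0$ the defining conditions collapse to $\mathcal C\varepsilon(\mathbf u)\in\mathrm H_{\mathrm{sym}}(\mathrm{div},\Omega_-)$, exactly the regularity imposed on $\mathbf u$ in the reduced problem, and to $-\kappa_\psi\nabla 0=\mathbf 0\in\mathbf H(\mathrm{div},\Omega_-)$, which is trivial. Since these two constructions are mutually inverse and both problems carry identical data and initial conditions, no issue arises in the time variable and the equivalence follows.
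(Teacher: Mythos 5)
Your proposal is correct and follows essentially the same route as the paper's proof: observe that all equations not involving $\psi$ or $\mathbf e$ coincide verbatim, that $\mathbf e\equiv\mathbf 0$ removes the coupling terms, and that the residual homogeneous Dirichlet problem for $\psi$ forces $\psi(t)=0$ by coercivity of $\kappa_\psi$. Your write-up merely spells out the integration-by-parts/Poincar\'{e} details and the regularity check that the paper leaves implicit.
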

\begin{proof}
First we notice that \eqref{eq:SecOrdForm2a}, \eqref{eq:SecOrdForm2c}, and \eqref{eq:SecOrdForm2e} are identical to \eqref{eq:SecOrdForm1a},  \eqref{eq:SecOrdForm1d}, and \eqref{eq:SecOrdForm1h} respectively. Letting $\mathbf e \equiv \mathbf 0$ immediately gives us that \eqref{eq:SecOrdForm2b} and \eqref{eq:SecOrdForm2d} are the same as \eqref{eq:SecOrdForm1b} and \eqref{eq:SecOrdForm1e}.  Since $\Gamma_N = \emptyset$, we have that \eqref{eq:SecOrdForm1f} is not a valid equation in this setting and $\mu(t) = 0$ for all $t \geq 0$ implies that we are looking for $\psi \in H_0^1(\Omega_-)$ such that 
\[
\nabla \cdot (\kappa_\psi \nabla \psi(t)) = 0 \quad \mbox{in $L^2(\Omega_-)$} \quad \forall t \geq 0.
\]
Since $\kappa_\psi$ is coercive, we have that $\psi(t) = 0$ for each $t \geq 0$. Thus the two problems are equivalent.   
\end{proof}

Similarly, we can make slight modifications to \eqref{eq:12} and be able to use all of the same analysis to obtain results on a semidiscrete version \eqref{eq:SecOrdForm2}.
\begin{theorem} \label{equiv2}
Under the same hypotheses as \Cref{equiv1}, the problem \eqref{eq:12} is equivalent to the problem of looking for 
\[
(u^h, \mathbf u^h)(t):[0, \infty) \to H^1(\mathbb R^d \setminus\Gamma)  \times \mathbf V_h,
\]
which for every $t \geq 0$ satisfy 
\begin{subequations} \label{eq:AcElasCoup2} 
\begin{alignat}{6}
\kappa_1 \ddot{u}^h(t) &= \kappa_0 \Delta u^h (t), \\
(\rho \ddot{\mathbf u}^h(t),\mathbf w)_{\Omega_-}\!\!
	+ (\mathcal C\varepsilon(\mathbf u^h)(t),\varepsilon(\mathbf w))_{\Omega_-}\!\!
	& = \langle \llbracket\gamma \dot u^h\rrbracket(t)\!-\!\dot\beta_0(t), \gamma \mathbf w\rangle
	 \,\,\, \forall \mathbf w\in \mathbf V_h,
\end{alignat}
as well as 
\begin{alignat}{7}
(\llbracket \gamma u^h \rrbracket (t),\llbracket \partial_\bnu u^h \rrbracket(t)) &\in Y_h \times X_h, \\
(\gamma \dot{\mathbf{u}}^h(t) \cdot \bnu + \kappa_0 \partial_{\bnu}^+ u^h(t) + \beta_1(t),\gamma^- u^h(t)) &\in Y_h^\circ \times X_h^\circ,
\end{alignat}
with vanishing initial conditions
\begin{equation}
u^h(0) = 0, \quad \dot{u}^h(0) = 0, \quad \mathbf{u}^h(0) = \mathbf{0}, \quad \dot{\mathbf{u}}^h(0) = \mathbf{0}.
\end{equation}
\end{subequations}
with the added unknown $\psi^h: [0, \infty) \to V_h$ such that $\psi^h(t) = 0$ for each $t \geq 0$. 
\end{theorem}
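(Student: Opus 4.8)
The plan is to mirror the proof of \Cref{equiv1} at the semidiscrete level, isolating the single nontrivial point, namely that the discrete electric potential vanishes. First I would check which of the equations in \eqref{eq:12} are literally unchanged by the hypotheses. Setting $\mathbf e \equiv \mathbf 0$, equation \eqref{eq:12a} and the exotic transmission conditions \eqref{eq:12e}--\eqref{eq:12f} are already exactly those appearing in \eqref{eq:AcElasCoup2}, and the coupling term $\mathbf e\nabla\psi^h$ in \eqref{eq:12b} disappears, so that \eqref{eq:12b} collapses to the purely elastic Galerkin equation of \eqref{eq:AcElasCoup2} once we know that $\psi^h$ plays no role. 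Thus the entire reduction hinges on showing $\psi^h(t)\equiv 0$.

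The crux is therefore the treatment of \eqref{eq:12c}--\eqref{eq:12d}. Since $\mu=0$ and $0\in\gamma_D V_h$, I would take the discrete datum $\mu^h\equiv 0$, which is the exact discrete approximation of $\mu$; then \eqref{eq:12d} reads $\gamma_D\psi^h=0$, i.e. $\psi^h(t)\in V_{h,D}$. With $\mathbf e\equiv\mathbf 0$ the term $\mathbf e^\top\varepsilon(\mathbf u^h)$ vanishes, and with $\Gamma_N=\emptyset$ the right-hand side $-\langle\eta,\gamma\phi\rangle_{\Gamma_N}$ vanishes as well, so \eqref{eq:12c} becomes $(\kappa_\psi\nabla\psi^h(t),\nabla\phi)_{\Omega_-}=0$ for all $\phi\in V_{h,D}$. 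The key discrete observation is that $\psi^h(t)$ itself is an admissible test function, since $\psi^h(t)\in V_{h,D}$; testing with $\phi=\psi^h(t)$ and invoking the uniform positive-definiteness $\kappa_\psi\mathbf d\cdot\mathbf d\ge d_0\|\mathbf d\|^2$ yields $\nabla\psi^h(t)=\mathbf 0$. Combining this with $\gamma_D\psi^h(t)=0$ on the nontrivial $\Gamma_D=\Gamma$ forces the constant value of $\psi^h(t)$ on each connected component of $\Omega_-$ to be zero, hence $\psi^h(t)=0$ for all $t\ge 0$.

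With $\psi^h\equiv 0$ in hand, equation \eqref{eq:12b} is precisely the elastic equation of \eqref{eq:AcElasCoup2}, and the remaining equations match componentwise, giving one direction of the equivalence; the converse is immediate, since any solution $(u^h,\mathbf u^h)$ of \eqref{eq:AcElasCoup2} augmented by the trivial unknown $\psi^h\equiv 0$ satisfies every line of \eqref{eq:12}. I do not expect a genuine analytic obstacle here: the only subtle point worth flagging is the bookkeeping around the choice $\mu^h\equiv 0$. This choice is what places $\psi^h$ in the test space $V_{h,D}$ and thereby delivers \emph{exact} vanishing via coercivity; with a generic nonzero $\mu^h$ (even one approximating the zero datum) the self-testing step would be illegitimate and one would recover only a smallness estimate rather than the identity $\psi^h\equiv 0$ needed for a genuine equivalence of problems.
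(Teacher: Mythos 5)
Your proposal is correct and follows essentially the same route as the paper, whose proof of \Cref{equiv2} simply declares it ``similar to that of \Cref{equiv1}'': identify the equations left unchanged by $\mathbf e\equiv\mathbf 0$, $\Gamma_N=\emptyset$, $\mu=0$, and then kill the potential via coercivity of $\kappa_\psi$ --- your Galerkin self-testing of \eqref{eq:12c} with $\phi=\psi^h(t)\in V_{h,D}$ is exactly the discrete analogue of the paper's continuous argument that the homogeneous Dirichlet problem forces $\psi\equiv 0$. Your explicit flagging of the choice $\mu^h\equiv 0$ (needed so that $\psi^h(t)$ is an admissible test function and one gets exact vanishing rather than a smallness estimate) is a worthwhile clarification of a point the paper leaves implicit.
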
  
\begin{proof}
The proof is similar to that of \Cref{equiv1}.
\end{proof}

\subsection{A damped elastic wave equation}

The elastic wave equation \eqref{eq:SecOrdForm1b} can be substituted by a damped version of the equation 
\[
\rho \ddot{\mathbf u}(t) + \omega \dot{\mathbf u}(t) = \mathrm{div}\, (\mathcal C \varepsilon(\mathbf u)(t) + \mathbf e \nabla \psi(t)),
\]
where  $\omega \in L^\infty(\Omega_-)$ is non-negative.  In this case the third component of $A_\star$ in \eqref{eq:A} has to be modified to be 
\[
\rho^{-1} \mathrm{div} \, (\mathrm S + \mathbf e \mathbf r) - \rho^{-1} \omega \mathbf u.
\]
A similar modification must be made to the semidiscrete version of $A_\star$ defined in \Cref{SDWP}.  The effect of this change is that in \Cref{lemmaDiss1,LemmaFEM1} rather than equality, we have $(AU, U) \leq 0$ for each $U \in D(A)$ and the ``flipping-sign'' operator $T$ of \Cref{lemmaT1,LemmaT3}, which allows for the reversal of time,  no longer exists.  All of the stability and error estimate results of this paper are still valid, keeping in mind that hidden constants will now also include constants related to the norm of $\omega$, but now $A$ generates a contraction $C_0$-semigroup of operators in $\mathbb H$ instead of a $C_0$-group of isometries. 

\subsection{Full integral formulations}
In this section we formulate an alternate version of the semidiscrete wave-solid interaction problem without piezoelectricity \eqref{eq:AcElasCoup2} with constant coefficients in the elastic law.  The goal is to state the problem entirely on $\Gamma$ using non-local integral representations for both the acoustic and elastic unknowns, with the idea that we post-process to find the values of the solution away from the boundary.  This requires that both unknown quantities now take values in $\mathbb R^d \setminus\Gamma$.  In addition to $Y_h$, we define $\mathbf Y_h \subset \mathbf H^{1/2}(\Gamma)$ to be closed and $\mathbf Y_h^\circ \subset \mathbf H^{-1/2}(\Gamma)$ is defined similarly to $Y_h^\circ$.  As in \Cref{Semidiscrete} we require that $\kappa_0$ and $\kappa_1$ be positive constants, and additionally that $\rho$ and the tensor $\mathcal C$ have constant coefficients.   

We can state the second order formulation of the problem as looking for 
\[
(u^h, \mathbf u^h): [0, \infty) \to H^1(\mathbb R \setminus\Gamma)\times \mathbf H^1(\mathbb R \setminus\Gamma),
\]
which for every $t \geq 0$ satisfy
\begin{subequations} \label{eq:SecOrdForm5}
\begin{alignat}{7}
\kappa_1 \ddot{u}^h(t) &= \kappa_0 \Delta u^h (t), \label{eq:SecOrdForm5a}\\
\rho \ddot{\mathbf{u}}^h(t) &= \mathrm{div} \; \mathcal C \varepsilon(\mathbf{u}^h)(t), \label{eq:SecOrdForm5b}\\
\llbracket \gamma \dot{\mathbf{u}}^h \rrbracket (t) \cdot \bnu  - \kappa_0 \llbracket \partial_{\bnu} u^h \rrbracket (t) + \beta_1(t) &= 0, \label{eq:SecOrdForm5c} \\
\llbracket \bgamma_{\bnu} \mathcal C \varepsilon (\mathbf{u}^h) \rrbracket (t) - \llbracket \gamma \dot{u}^h\rrbracket (t) \bnu + \dot{\beta}_0(t) \bnu &= \mathbf{0}, \label{eq:SecOrdForm5d}
\end{alignat}
as well as
\begin{alignat}{5}
(\llbracket \gamma u^h \rrbracket (t), \llbracket \gamma \mathbf{u}^h \rrbracket (t)) &\in Y_h \times \mathbf Y_h, \label{eq:SecOrdForm5e}\\
(\gamma^+ \dot{\mathbf{u}}^h(t) \cdot \bnu + \kappa_0 \partial_{\bnu}^- u^h(t), \; \bgamma_{\bnu}^+ \mathcal C\varepsilon(\mathbf{u}^h)(t) + \gamma^- \dot{u}^h (t) \bnu) &\in Y_h^\circ \times \mathbf Y_h^\circ, \label{eq:SecOrdForm5f}
\end{alignat}
with vanishing initial conditions
\begin{equation}
u^h(0) = 0, \quad  \dot{u}^h(0) = 0, \quad  \mathbf{u}^h(0) = \mathbf{0}, \quad  \dot{\mathbf{u}}^h(0) = \mathbf{0}.
\end{equation}
\end{subequations}
We arrive at an equivalent problem if we replace \eqref{eq:SecOrdForm5f} with 
\[
(\gamma^- \dot{\mathbf{u}}^h(t) \cdot \bnu + \kappa_0 \partial_{\bnu}^+ u^h(t) + \beta_1(t), \; \bgamma_{\bnu}^- \mathcal C\varepsilon(\mathbf{u}^h)(t) + \gamma^+ \dot{u}^h(t)\bnu + \dot{\beta}_0(t)\bnu) \in Y_h^\circ \times \mathbf Y_h^\circ.
\]
Thinking in terms of retarded potentials and integral equations, we define the densities $(\phi^h, \boldsymbol{\phi}^h) := (\llbracket \gamma u^h \rrbracket, \llbracket \gamma \mathbf u^h \rrbracket) \in Y_h \times \mathbf Y_h$ to arrive at the following result. 
\begin{theorem} \label{BEMStab}
For $(\beta_0, \beta_1) \in W^{2}(H^{1/2}(\Gamma)) \times W^1(H^{-1/2}(\Gamma))$, we have that the solution to \eqref{eq:SecOrdForm5} is unique and satisfies
\begin{alignat*}{4}
&\|u^h(t)\|_{1, \mathbb R^d \setminus\Gamma} + \|\mathbf u^h (t)\|_{1, \mathbb R^d \setminus\Gamma}\\
&\quad  + \|\phi^h(t)\|_{1/2, \Gamma} + \|\boldsymbol{\phi}^h(t)\|_{1/2, \Gamma} &&\lesssim  H_2((\beta_0, \partial^{-1} \beta_1), t |H^{1/2}(\Gamma) \times H^{-1/2}(\Gamma)).
\end{alignat*}
\end{theorem}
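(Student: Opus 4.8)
The plan is to reproduce, in a symmetrized form, the three-step programme used for \Cref{Results3}: recast \eqref{eq:SecOrdForm5} as a first order evolution equation $\dot U^h = A_\star U^h + F$, $BU^h = \Xi$, $U^h(0)=0$ fitting the abstract framework of \Cref{framework}, verify the dissipativity, involution, and solvability hypotheses, and then invoke \Cref{bigTheorem}. First I would introduce the integrated variables $\mathbf v^h := \partial^{-1}(\kappa_0\nabla u^h)$ and $\mathrm S^h := \partial^{-1}(\mathcal C\varepsilon(\mathbf u^h))$, turning \eqref{eq:SecOrdForm5a}--\eqref{eq:SecOrdForm5b} into the first order system $\dot u^h = \kappa_1^{-1}\nabla\cdot\mathbf v^h$, $\dot{\mathbf v}^h = \kappa_0\nabla u^h$, $\dot{\mathbf u}^h = \rho^{-1}\mathrm{div}\,\mathrm S^h$, $\dot{\mathrm S}^h = \mathcal C\varepsilon(\mathbf u^h)$, now posed on $\mathbb R^d\setminus\Gamma$ for both the acoustic and the elastic blocks. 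Integrating the transmission relations \eqref{eq:SecOrdForm5c}--\eqref{eq:SecOrdForm5d} in time (using the vanishing initial data) expresses the natural jumps $\llbracket\gamma_\bnu\mathbf v^h\rrbracket$ and $\llbracket\bgamma_\bnu\mathrm S^h\rrbracket$ through the elastic and acoustic velocity/trace jumps plus $\partial^{-1}\beta_1$ and $\beta_0$, and combining these with the one-sided Galerkin conditions \eqref{eq:SecOrdForm5f} reduces everything to a pattern analogous to \eqref{eq:FirstOrdForm3f}--\eqref{eq:FirstOrdForm3i}, now symmetric in the two fields.

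I would then set $\mathbb H := L^2(\mathbb R^d\setminus\Gamma)\times\mathbf L^2(\mathbb R^d\setminus\Gamma)\times\mathbf L^2(\mathbb R^d\setminus\Gamma)\times\mathrm L^2_{\mathrm{sym}}(\mathbb R^d\setminus\Gamma)$ with the energy norm $\|U^h\|_{\mathbb H}^2 = (\kappa_1 u^h,u^h) + (\kappa_0^{-1}\mathbf v^h,\mathbf v^h) + (\rho\mathbf u^h,\mathbf u^h) + (\mathcal C^{-1}\mathrm S^h,\mathrm S^h)$, take $\mathbb V := H^1(\mathbb R^d\setminus\Gamma)\times\mathbf H(\mathrm{div},\mathbb R^d\setminus\Gamma)\times\mathbf H^1(\mathbb R^d\setminus\Gamma)\times\mathrm H_{\mathrm{sym}}(\mathrm{div},\mathbb R^d\setminus\Gamma)$ with $\|U^h\|_{\mathbb V}^2 = \|U^h\|_{\mathbb H}^2 + \|A_\star U^h\|_{\mathbb H}^2$, and let $A_\star$ act by the four interior differential operators above. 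The novelty relative to \Cref{SDWP} is that the elastic field now carries a genuine jump $\llbracket\gamma\mathbf u^h\rrbracket \in \mathbf Y_h$, so $B:\mathbb V\to\mathbb M_2$ acquires four components, $BU^h := \big(\llbracket\gamma u^h\rrbracket\big|_{Y_h^\circ},\, \llbracket\gamma\mathbf u^h\rrbracket\big|_{\mathbf Y_h^\circ},\, (\gamma^-\mathbf u^h\cdot\bnu + \gamma_\bnu^+\mathbf v^h)\big|_{Y_h},\, (\bgamma_\bnu^-\mathrm S^h + \gamma^+ u^h\bnu)\big|_{\mathbf Y_h}\big)$, with $\mathbb M_2 := (Y_h^\circ)^*\times(\mathbf Y_h^\circ)^*\times Y_h^*\times\mathbf Y_h^*$, while $G\equiv 0$ and the data are placed in $\Xi$ through $\partial^{-1}\beta_1$ and $\beta_0$.

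With $D(A)=\mathrm{Ker}\,B$ and $A=A_\star\big|_{D(A)}$, the dissipativity identity $(AU^h,U^h)_{\mathbb H}=0$ would be proved exactly as in \Cref{LemmaFEM1}: integrating by parts on each side of $\Gamma$ produces acoustic boundary pairings of the form $\langle\gamma_\bnu^{\pm}\mathbf v^h,\gamma^{\pm}u^h\rangle$ and elastic ones $\langle\bgamma_\bnu^{\pm}\mathrm S^h,\gamma^{\pm}\mathbf u^h\rangle$, which regroup into jump/average pairings that vanish once the closed-space/polar-set orthogonality of \eqref{eq:SecOrdForm5e}--\eqref{eq:SecOrdForm5f} is used; the two leftover cross terms, both proportional to $\langle\llbracket\gamma u^h\rrbracket,\llbracket\gamma\mathbf u^h\rrbracket\cdot\bnu\rangle$, appear with opposite signs and cancel. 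The time-reversal operator $T(u^h,\mathbf v^h,\mathbf u^h,\mathrm S^h) := (u^h,-\mathbf v^h,-\mathbf u^h,\mathrm S^h)$ is an isometric involution of $\mathbb H$ that preserves $D(A)$ and satisfies $TA=-AT$, so $A$ generates a $C_0$-group of isometries.

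The main obstacle is the solvability lemma, the analogue of \Cref{LemmaSolve3}: for data $(F,\Xi)\in\mathbb H\times\mathbb M$ one must solve $U^h = A_\star U^h + F$, $BU^h = \chi$ with an $h$-independent bound. I would pass to the variational formulation on $\mathbb U := H^1(\mathbb R^d\setminus\Gamma)\times\mathbf H^1(\mathbb R^d\setminus\Gamma)$ and the subspace $\mathbb U_0$ on which the two essential jump conditions ($\llbracket\gamma u^h\rrbracket\in Y_h$ and $\llbracket\gamma\mathbf u^h\rrbracket\in\mathbf Y_h$) hold, introducing a combined trace $\breve\gamma(u^h,\mathbf u^h)$ into the dual polar spaces; the key technical point is that $\breve\gamma$ is surjective with an $h$-uniformly bounded right inverse, which follows from the arguments of \cite{LaSa2009} applied simultaneously to the scalar and the vector jumps. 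The associated bilinear form carries the full mass terms $(\kappa_1 u^h,w) + (\rho\mathbf u^h,\mathbf w)$ together with $(\kappa_0\nabla u^h,\nabla w) + (\mathcal C\varepsilon(\mathbf u^h),\varepsilon(\mathbf w))$ and a skew boundary pairing, so coercivity on $\mathbb U_0$ is immediate from Korn's second inequality on each side of $\Gamma$ (the mass term removes any rigid-motion obstruction, even on the unbounded side), and Lax--Milgram applies. Recovering $\mathbf v^h$ and $\mathrm S^h$ as in \eqref{eq:SecQuant1} and reversing the distributional identities places $U^h$ in $\mathbb V$ with the stated bound. Finally \Cref{bigTheorem} yields the $\mathbb V$-estimate for $U^h$, hence the bounds on $\|u^h(t)\|_{1,\mathbb R^d\setminus\Gamma}$ and $\|\mathbf u^h(t)\|_{1,\mathbb R^d\setminus\Gamma}$, and the trace inequalities $\|\phi^h(t)\|_{1/2,\Gamma} = \|\llbracket\gamma u^h\rrbracket(t)\|_{1/2,\Gamma}\lesssim\|u^h(t)\|_{1,\mathbb R^d\setminus\Gamma}$ and $\|\boldsymbol\phi^h(t)\|_{1/2,\Gamma}\lesssim\|\mathbf u^h(t)\|_{1,\mathbb R^d\setminus\Gamma}$ complete the estimate; the assumed regularity $\beta_0\in W^2(H^{1/2}(\Gamma))$ and $\beta_1\in W^1(H^{-1/2}(\Gamma))$ is exactly what makes $\beta_0$ and $\partial^{-1}\beta_1$ admissible data for the $H_2$ bound.
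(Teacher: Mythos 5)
Your overall plan---integrated variables, first--order reformulation, verification of the dissipativity/involution/solvability hypotheses, then \Cref{bigTheorem}---is exactly the route the paper intends (it omits this proof as being ``similar'' to \Cref{ContWP,SDWP}). The genuine gap is in your boundary operator $B$. In \eqref{eq:SecOrdForm5}, the conditions \eqref{eq:SecOrdForm5c}--\eqref{eq:SecOrdForm5d} are \emph{full-strength} equations in $H^{-1/2}(\Gamma)$ and $\mathbf H^{-1/2}(\Gamma)$, which in integrated form read $\llbracket\gamma_\bnu\mathbf v^h\rrbracket-\llbracket\gamma\mathbf u^h\rrbracket\cdot\bnu=\partial^{-1}\beta_1$ and $\llbracket\bgamma_\bnu\mathrm S^h\rrbracket-\llbracket\gamma u^h\rrbracket\bnu=-\beta_0\bnu$: they are what slave the flux-jump densities to the unknown trace-jump densities $(\phi^h,\boldsymbol\phi^h)$, and they are where the data live. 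They cannot be ``combined with'' the Galerkin conditions \eqref{eq:SecOrdForm5f} and then dropped, because restricting to the finite-dimensional spaces $Y_h,\mathbf Y_h$ only retains the Galerkin projection of these identities. Consequently your four-component $B$ has a kernel that is strictly too large, and the first hypothesis of the framework fails on it. Concretely, take $\mathbf u^h=\mathbf 0$, $\mathrm S^h=0$, $u^h$ with $\gamma^+u^h=0$ and $0\neq\gamma^-u^h\in Y_h$, and $\mathbf v^h$ with $\gamma_\bnu^+\mathbf v^h=0$ and $\langle\gamma_\bnu^-\mathbf v^h,\gamma^-u^h\rangle\neq0$. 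This $U^h$ annihilates all four components of your $B$, yet $(AU^h,U^h)_{\mathbb H}=\langle\gamma_\bnu^-\mathbf v^h,\gamma^-u^h\rangle\neq0$, so dissipativity, uniqueness, and the group of isometries all fail; of course this $U^h$ violates \eqref{eq:SecOrdForm5c}, which your $B$ never sees. For the same reason your description of the cancellation (``two leftover cross terms with opposite signs'') is not how the identity closes: the cross terms can only be removed by substituting the jump equations, i.e.\ they must hold on $\mathrm{Ker}\,B$.

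The repair is to let $B$ carry all six conditions, two of them valued in full trace spaces:
\[
BU^h=\Big(\llbracket\gamma u^h\rrbracket\big|_{Y_h^\circ},\;\llbracket\gamma\mathbf u^h\rrbracket\big|_{\mathbf Y_h^\circ},\;\llbracket\gamma_\bnu\mathbf v^h\rrbracket-\llbracket\gamma\mathbf u^h\rrbracket\cdot\bnu,\;\llbracket\bgamma_\bnu\mathrm S^h\rrbracket-\llbracket\gamma u^h\rrbracket\bnu,\;(\gamma^+\mathbf u^h\cdot\bnu+\gamma_\bnu^-\mathbf v^h)\big|_{Y_h},\;(\bgamma_\bnu^+\mathrm S^h+\gamma^-u^h\bnu)\big|_{\mathbf Y_h}\Big),
\]
mapping into $\mathbb M_2=(Y_h^\circ)^*\times(\mathbf Y_h^\circ)^*\times H^{-1/2}(\Gamma)\times\mathbf H^{-1/2}(\Gamma)\times Y_h^*\times\mathbf Y_h^*$, with data $\chi=(0,0,\partial^{-1}\beta_1,-\beta_0\bnu,0,0)$ (so the full-strength data do not get projected away). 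On $\mathrm{Ker}\,B$ the two homogeneous jump identities convert the integrated-by-parts boundary terms into two polar-set pairings, $(AU^h,U^h)_{\mathbb H}=\langle\gamma_\bnu^+\mathbf v^h+\gamma^-\mathbf u^h\cdot\bnu,\llbracket\gamma u^h\rrbracket\rangle+\langle\bgamma_\bnu^+\mathrm S^h+\gamma^-u^h\bnu,\llbracket\gamma\mathbf u^h\rrbracket\rangle=0$, which is the correct analogue of \Cref{LemmaFEM1}. With this $B$ the rest of your program works as outlined: in the Lax--Milgram step the one-sided traces $\gamma^-w,\gamma^-\mathbf w$ of test functions in $\mathbb U_0$ range over all of $H^{1/2}(\Gamma)\times\mathbf H^{1/2}(\Gamma)$, so the full jump equations are recovered as natural conditions of the variational problem (this is precisely why they belong in $B$), and the regularity $(\beta_0,\beta_1)\in W^2(H^{1/2}(\Gamma))\times W^1(H^{-1/2}(\Gamma))$ is exactly what makes $\Xi\in W^2(\mathbb M)$ in \Cref{bigTheorem}(a).
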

The process of proving this result uses the same theoretical framework discussed in the analysis of the previous problems and the argument is similar.  In the interest of space and to avoid too much repetition, we forgo this process and move on to the results concerning approximation errors.  

If we choose $Y_h \times \mathbf Y_h = H^{1/2}(\Gamma) \times \mathbf H^{1/2}(\Gamma)$, then we will have $Y_h^\circ \times \mathbf Y_h^\circ = \{0\} \times \{ \mathbf 0 \}$.  The solution $(u, \mathbf u)$ to the above problem with this choice of spaces is continuous with the corresponding densities $(\phi, \boldsymbol{\phi}):=(-\gamma^+ u, \gamma^- \mathbf u) \in H^{1/2}(\Gamma) \times \mathbf H^{1/2}(\Gamma)$.  Once again considering the semidiscrete solution $(u^h, \mathbf u^h)$ from the above result, and defining the best approximation operators $\Pi_h:  H^{1/2}(\Gamma) \to Y_h$ and $\boldsymbol{\Pi}_h: \mathbf H^{1/2}(\Gamma) \to \mathbf Y_h$, we obtain the following error estimates.
\begin{theorem} \label{BEMErr}
Consider $(\phi, \boldsymbol{\phi}) \in W^2(\mathbb H^{1/2})$, and let $(u^h, \mathbf u^h)$ be the semidiscretizations of the solutions $(u, \mathbf u)$ of the system \eqref{eq:SecOrdForm5}. The following estimate holds
\begin{alignat*}{4}
&\|(u - u^h)(t)\|_{1, \mathbb R^d \setminus\Gamma} + \| (\mathbf u - \mathbf u^h)(t) \|_{1, \mathbb R^d \setminus\Gamma}\\
&\quad + \|(\phi - \phi^h)(t)\|_{1/2,\Gamma} + \|(\boldsymbol{\phi} - \boldsymbol{\phi}^h)(t)\|_{1/2,\Gamma} &&\lesssim H_2(\phi - \Pi_h \phi, t | H^{1/2}(\Gamma))\\
& && \quad + H_2(\boldsymbol{\phi} - \boldsymbol{\Pi}_h \boldsymbol{\phi}, t|\mathbf H^{1/2}(\Gamma)).
\end{alignat*}
\end{theorem}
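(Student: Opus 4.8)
The plan is to reuse, essentially verbatim, the evolution-equation machinery of \Cref{EE}, adapted to the pure boundary-integral (BEM--BEM) setting in which there is no interior Galerkin space and hence no elliptic projection. First I would reduce \eqref{eq:SecOrdForm5} to a first-order-in-time system by introducing $\mathbf v^h := \partial^{-1}(\kappa_0 \nabla u^h)$ and $\mathrm S^h := \partial^{-1} \mathcal C \varepsilon(\mathbf u^h)$ (no electric variable is needed, since $\mathbf e \equiv \mathbf 0$), with state $(u^h, \mathbf v^h, \mathbf u^h, \mathrm S^h)$ living in
\[
\mathbb V := H^1(\mathbb R^d \setminus\Gamma) \times \mathbf H(\mathrm{div}, \mathbb R^d \setminus\Gamma) \times \mathbf H^1(\mathbb R^d \setminus\Gamma) \times \mathrm H_{\mathrm{sym}}(\mathrm{div}, \mathbb R^d \setminus\Gamma),
\]
the pivot space $\mathbb H$ carrying the weighted inner product $(\kappa_1 u, u) + (\kappa_0^{-1}\mathbf v, \mathbf v) + (\rho \mathbf u, \mathbf u) + (\mathcal C^{-1}\mathrm S, \mathrm S)$ inherited from the previous sections, and a multiplier space $\mathbb M$ assembled from the duals of $Y_h^\circ$, $\mathbf Y_h^\circ$, $Y_h$, and $\mathbf Y_h$. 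The operator $B$ encodes the exotic transmission conditions \eqref{eq:SecOrdForm5e}--\eqref{eq:SecOrdForm5f} through polar-set restrictions, exactly as the operator in \Cref{LemmaSolve3}; this is the same framework that underlies the (omitted) stability result \Cref{BEMStab}.

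The three structural hypotheses of \Cref{framework} are checked as in the earlier sections. The dissipativity identity $(AU^h, U^h)_{\mathbb H} = 0$ follows from integrating by parts on both sides of $\Gamma$ and invoking the orthogonality of each polar set against its annihilating discrete space, as in \Cref{LemmaFEM1}; the time-reversal operator $T$ that flips the signs of the velocity-type variables $\mathbf v^h$ and $\mathbf u^h$ is an isometric involution with $TA = -AT$, as in \Cref{LemmaT3}; and steady-state solvability reduces to a coercive variational problem on $H^1(\mathbb R^d \setminus\Gamma) \times \mathbf H^1(\mathbb R^d \setminus\Gamma)$ solved by Lax--Milgram, together with a uniformly bounded right inverse of the combined exotic-trace operator in the spirit of \cite{LaSa2009}.

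With the framework in place I would set $E^h := (u - u^h, \mathbf v - \mathbf v^h, \mathbf u - \mathbf u^h, \mathrm S - \mathrm S^h)$, where $(u, \mathbf v, \mathbf u, \mathrm S)$ is the continuous solution obtained from the choice $Y_h = H^{1/2}(\Gamma)$, $\mathbf Y_h = \mathbf H^{1/2}(\Gamma)$. Since both $(u, \mathbf u)$ and $(u^h, \mathbf u^h)$ satisfy the same interior equations \eqref{eq:SecOrdForm5a}--\eqref{eq:SecOrdForm5b} and the same equilibrium conditions \eqref{eq:SecOrdForm5c}--\eqref{eq:SecOrdForm5d}, the incident-wave data $\beta_0$ and $\beta_1$ cancel in the difference, so $E^h$ obeys the homogeneous evolution $\dot E^h = A_\star E^h$ with $F = 0$ and $BE^h = \Xi$ carrying only the boundary mismatch. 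The decisive observation --- the analogue of the data identification \eqref{eq:error1} --- is that $\llbracket \gamma(u - u^h) \rrbracket = \phi - \phi^h$ with $\phi^h \in Y_h$, so its restriction to $Y_h^\circ$ equals $\phi\big|_{Y_h^\circ} = (\phi - \Pi_h \phi)\big|_{Y_h^\circ}$ because $\Pi_h \phi \in Y_h$ annihilates $Y_h^\circ$; the elastic density contributes $(\boldsymbol{\phi} - \boldsymbol{\Pi}_h \boldsymbol{\phi})\big|_{\mathbf Y_h^\circ}$ in the same way, while the two equilibrium-testing components of $\Xi$ vanish (the continuous residual is exactly zero and the semidiscrete one annihilates the test space). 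Applying \Cref{bigTheorem} with the regularity $\ell = 1$ then bounds $\|E^h(t)\|_{\mathbb V}$, hence $\|(u - u^h)(t)\|_{1, \mathbb R^d \setminus\Gamma} + \|(\mathbf u - \mathbf u^h)(t)\|_{1, \mathbb R^d \setminus\Gamma}$, by the $H_2$-norms of $\phi - \Pi_h \phi$ and $\boldsymbol{\phi} - \boldsymbol{\Pi}_h \boldsymbol{\phi}$; the density estimates close the argument via the trace bounds $\|(\phi - \phi^h)(t)\|_{1/2, \Gamma} \lesssim \|(u - u^h)(t)\|_{1, \mathbb R^d \setminus\Gamma}$ and $\|(\boldsymbol{\phi} - \boldsymbol{\phi}^h)(t)\|_{1/2, \Gamma} \lesssim \|(\mathbf u - \mathbf u^h)(t)\|_{1, \mathbb R^d \setminus\Gamma}$.

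The main obstacle I anticipate is the steady-state solvability step for this pure BEM--BEM configuration. Coercivity of the elastic block is delicate, since $(\mathcal C \varepsilon(\cdot), \varepsilon(\cdot))_{\mathbb R^d \setminus\Gamma}$ controls only the symmetric gradient and vanishes on rigid motions; unlike the elliptic projection of \Cref{EE}, however, the resolvent structure $U^h = A_\star U^h + \ldots$ supplies a genuine mass term $(\rho \mathbf u^h, \mathbf w)_{\mathbb R^d \setminus\Gamma}$, so that Korn's inequality restores full coercivity on $\mathbf H^1(\mathbb R^d \setminus\Gamma)$ and rigid motions cause no difficulty here. The remaining care lies in showing that the combined exotic-trace operator attached to $Y_h^\circ \times \mathbf Y_h^\circ \times Y_h \times \mathbf Y_h$ is surjective with an $h$-uniformly bounded right inverse, which is precisely where the hypothesis that $Y_h$ and $\mathbf Y_h$ are closed is used.
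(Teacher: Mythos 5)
Your overall strategy --- recasting the BEM--BEM system in the abstract framework of \Cref{framework}, identifying the error data through the polar-set identity $(\phi-\phi^h)\big|_{Y_h^\circ}=\phi\big|_{Y_h^\circ}=(\phi-\Pi_h\phi)\big|_{Y_h^\circ}$, applying \Cref{bigTheorem} with $F=0$, and closing with the trace bounds for the densities --- is exactly the route the paper intends (it omits the proof precisely because it repeats the arguments of \Cref{SDWP,EE}), and your observations that no elliptic projection is needed, that the data $\beta_0,\beta_1$ cancel in the error, and that the resolvent mass term restores Korn coercivity are all correct. There is, however, a concrete flaw in your setup of the operator $B$: the multiplier space ``assembled from the duals of $Y_h^\circ$, $\mathbf Y_h^\circ$, $Y_h$, and $\mathbf Y_h$'' encodes only the conditions \eqref{eq:SecOrdForm5e}--\eqref{eq:SecOrdForm5f} and omits the two \emph{exact} transmission conditions \eqref{eq:SecOrdForm5c}--\eqref{eq:SecOrdForm5d}, which in integrated first-order form read $\llbracket \gamma \mathbf u^h\rrbracket\cdot\bnu - \llbracket \gamma_\bnu \mathbf v^h\rrbracket + \partial^{-1}\beta_1 = 0$ and $\llbracket \bgamma_\bnu \mathrm S^h\rrbracket - \llbracket \gamma u^h\rrbracket\bnu + \beta_0\bnu = \mathbf 0$. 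Unlike the FEM--BEM coupling of \Cref{SDWP}, where the stress condition is absorbed weakly into the interior Galerkin equation via $\gamma_{\bnu,h}^t$ and $G\xi$, here there is no interior equation to hide these conditions in: they must appear as two additional $H^{-1/2}(\Gamma)$- and $\mathbf H^{-1/2}(\Gamma)$-valued components of $B$.

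The omission is not cosmetic, because the dissipativity identity fails on the kernel of the $B$ you describe. Integrating by parts on both sides of $\Gamma$ and using only the polar-set orthogonality from \eqref{eq:SecOrdForm5f} leaves the residual boundary terms
\[
\langle \llbracket \gamma_\bnu \mathbf v\rrbracket, \gamma^+ u\rangle
+\langle \llbracket \bgamma_\bnu \mathrm S\rrbracket, \gamma^- \mathbf u\rangle
-\langle \gamma^+\mathbf u\cdot\bnu, \llbracket \gamma u\rrbracket\rangle
-\langle \gamma^- u\,\bnu, \llbracket \gamma \mathbf u\rrbracket\rangle,
\]
and these do not vanish in general, since $\llbracket \gamma_\bnu \mathbf v\rrbracket$ and $\llbracket \bgamma_\bnu \mathrm S\rrbracket$ are unconstrained by your kernel; only after substituting the homogeneous versions of \eqref{eq:SecOrdForm5c}--\eqref{eq:SecOrdForm5d}, namely $\llbracket \gamma_\bnu \mathbf v\rrbracket = \llbracket \gamma \mathbf u\rrbracket\cdot\bnu$ and $\llbracket \bgamma_\bnu \mathrm S\rrbracket = \llbracket \gamma u\rrbracket\bnu$, do the four terms cancel pairwise. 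Without dissipativity you lose uniqueness for the steady-state and evolution problems, so \Cref{bigTheorem} cannot be invoked, and moreover the acoustic and elastic fields are never actually coupled (the coupling happens exactly through \eqref{eq:SecOrdForm5c}--\eqref{eq:SecOrdForm5d}), so \Cref{BEMStab} could not be recovered either. The fix is straightforward: enlarge $\mathbb M_2$ by $H^{-1/2}(\Gamma)\times\mathbf H^{-1/2}(\Gamma)$ and let $B$ carry the integrated transmission conditions, whose data components are $(-\partial^{-1}\beta_1, -\beta_0\bnu)$ for problem \eqref{eq:SecOrdForm5} and vanish identically for the error system, since the continuous and semidiscrete solutions satisfy them with the same data. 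With this correction the remainder of your argument goes through and yields the stated estimate.
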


\subsection{Comparison with existing results}
The problems explored in this work have been analyzed recently in \cite{HsSaSa2016} and \cite{SaSa2016}.  The novelty of this paper is the method with which the problems are analyzed as we have tried to emphasize throughout. Here we briefly stress some of the advantages that the new analysis provides. 

First of all, the path followed in the aforementioned references required using the Laplace transform, analyzing the problem in the Laplace domain and inverting back into the time domain to obtain the final estimates. Since the Laplace transform is not an isometry, the estimates obtained in this fashion are not sharp. This leads to a loss of regularity in the estimates and increased smoothness requirements in the problem data. Often times these requirements are unnecessarily strong and are only due to the proof technique, as can be seen by the improved results in the present communication.

In fact, we can see that the estimates we present here require decreased time differentiability from the problem data. Comparing the results in \Cref{Results3} with Corollary 5.1 of \cite{SaSa2016}, our current results require one less time derivative of $\beta_0, \beta_1$ and $\eta$ and two less time derivatives of $\mu^h$.  The error estimates in \Cref{errEst1}, when compared with similar results from Corollary 5.2 from the same paper, require two less time derivatives for $\mathbf u$ and $\psi$, one less time derivative for $\lambda$ and $\phi$. On the other hand, the corresponding bound in \cite{SaSa2016} does not depend on $\mu$ and $\mu^h$.

By way of Theorems \ref{equiv1} and \ref{equiv2} we can compare Theorems \ref{Results3} and \ref{errEst1} with Corollaries 4.1 and 4.2 of \cite{HsSaSa2016}.  We note that there is a typo in the statement of Corollary 4.2 in that the quantity $\mathbf u$ should be in the space $W_+^4(\mathbf H^1(\Omega_-)) \cap W_+^5(\mathbf L^2(\Omega_-)).$  With this correction, we see that the new results require one less time derivative for all quantities involved. Finally, we can compare Theorems \ref{BEMStab} and \ref{BEMErr} with Corollaries 3.3 and 3.4 of \cite{HsSaSa2016} and we see that the new results require two orders of differentiability less for all quantities.

Moreover, the bounds proven following the Laplace domain technique contain terms which depend on the time variable, while all of the results obtained with the technique introduced in the present paper are independent of $t$. Overall, it is clear that the analysis presented in the current work provides a noticeable improvement in regularity requirements, and sharper error constants thus allowing for more general problem data.
\section{Numerical Experiments}\label{NumExp}
We now present some numerical examples of the kind of problems that can be tackled using a fully-discrete version of the integro-differential formulation \eqref{eq:16}. The aim of the section is not to discuss a concrete choice of discretization scheme and its numerical properties, but rather to illustrate the applicability of the formulation studied. We present three examples with simplified plane geometries that present interesting physical situations that are accurately captured by the model. 

For the experiments chosen, the spacial discretization strategy uses finite elements for the piezoelectric unknowns and Galerkin boundary elements for the acoustic wavefield, while the time discretization was done using second order backwards differentiation (BDF2) time stepping for the finite element evolution and BDF2-based Convolution Quadrature for the unknown acoustic densities. The analysis of the stability and convergence properties of such a coupling are not in the scope of the present work and have been done in \cite{SaSa2016}, where details of the implementation are also discussed.

In all the test problems, we consider that the speed of sound in the fluid is $c=1$ and that the Lam\'e parameters of the solid are $\lambda=2$ and $\mu=3$. To express the entries of the elastic stiffness tensor $\mathcal C$, the piezoelectric tensor $\mathbf e$, and the dielectric tensor $\kappa_\psi$ we will make use of Voigt's notation and identify the symmetric pairs of indices by 
\[
(1,1) \leftrightarrow 1 \qquad (2,2) \leftrightarrow 2
\qquad (1,2) \leftrightarrow 3.
\]
This formally reduces the four-index tensor $\mathcal C$ into a $3\times 3$ symmetric matrix, the three-index piezoelectric tensor into a $2\times 3$ matrix and the dielectric tensor $\kappa_\psi$ into a $3\times 1$ vector. For these experiments, the physical coefficients were chosen to be
\begin{equation}\label{eq:7.2}
\mathcal C = \left(\begin{array}{ccc} 2.118 & 0.6 & 0  \\ 0.6 & 2.118 & 0 \\ 0 & 0 & 0.9 \end{array}\right)\, , \quad \mathbf e = \left(\begin{array}{ccc} 1 & 5 & 5 \\ 5 & 1 & 5 \end{array}\right)\, , \quad  \kappa_\psi =\left(\begin{array}{c} 4 \\ 4 \\ 1 \end{array}\right).
\end{equation}
%

\paragraph{A piezoelectric pentagon.} 
As a first test problem we consider the acoustic scattering by an inhomogeneous piezoelectric obstacle. The sinusoidal plane pulse
\begin{equation}\label{eq:7.1}
v^{inc}= 3\chi_{[0,0.3]}(\tau)\sin{(88\tau)},\quad \tau:= t-\mathbf r\cdot\mathbf d,\quad \mathbf r:=(x,y),\quad \mathbf d := (1,5)/\sqrt{26},
\end{equation}
impinges upon the pentagonal scatterer depicted in \Cref{fig:7.1} (a) with a mass distribution
\[
\rho_\Sigma = 5 + 25e^{-100r^2} \qquad r := |\mathbf r|.
\] 
All the edges of the obstacle were considered as Dirichlet boundary, where the smooth grounding potential
\begin{equation}\label{eq:7.3}
\psi(\mathbf x, t) = 10\mathcal H(t),
\end{equation}
was imposed as a boundary condition. The function $\mathcal H(t)$ is the following polynomial approximation to Heaviside's step function
\begin{equation}\label{eq:7.4}
\mathcal H(t):= t^5(1-5(t-1)\!+15(t-1)^2\!\!-35(t-1)^3\!\!+70(t-1)^4\!\!-126(t-1)^5)\chi_{[0,1]}(t)\!+\!\chi_{[1,\infty)}(t).
\end{equation}
\begin{figure}\center{
	\begin{subfigure}[b]{0.27\textwidth}
		\includegraphics[width=\linewidth]{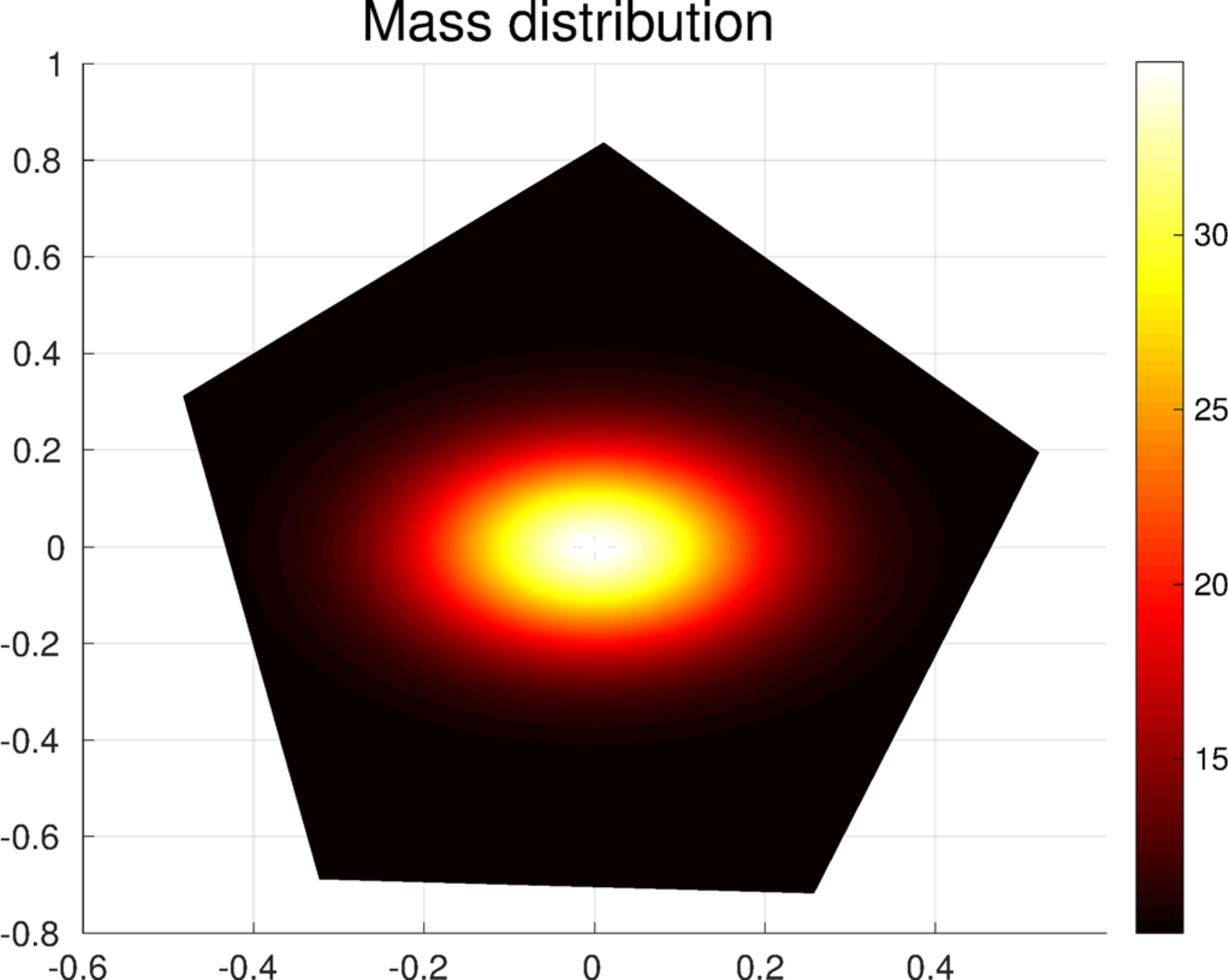}
		\caption{} \label{fig:7.1a}
	\end{subfigure}
	\qquad
	\begin{subfigure}[b]{0.27\textwidth}
		\includegraphics[width=\linewidth]{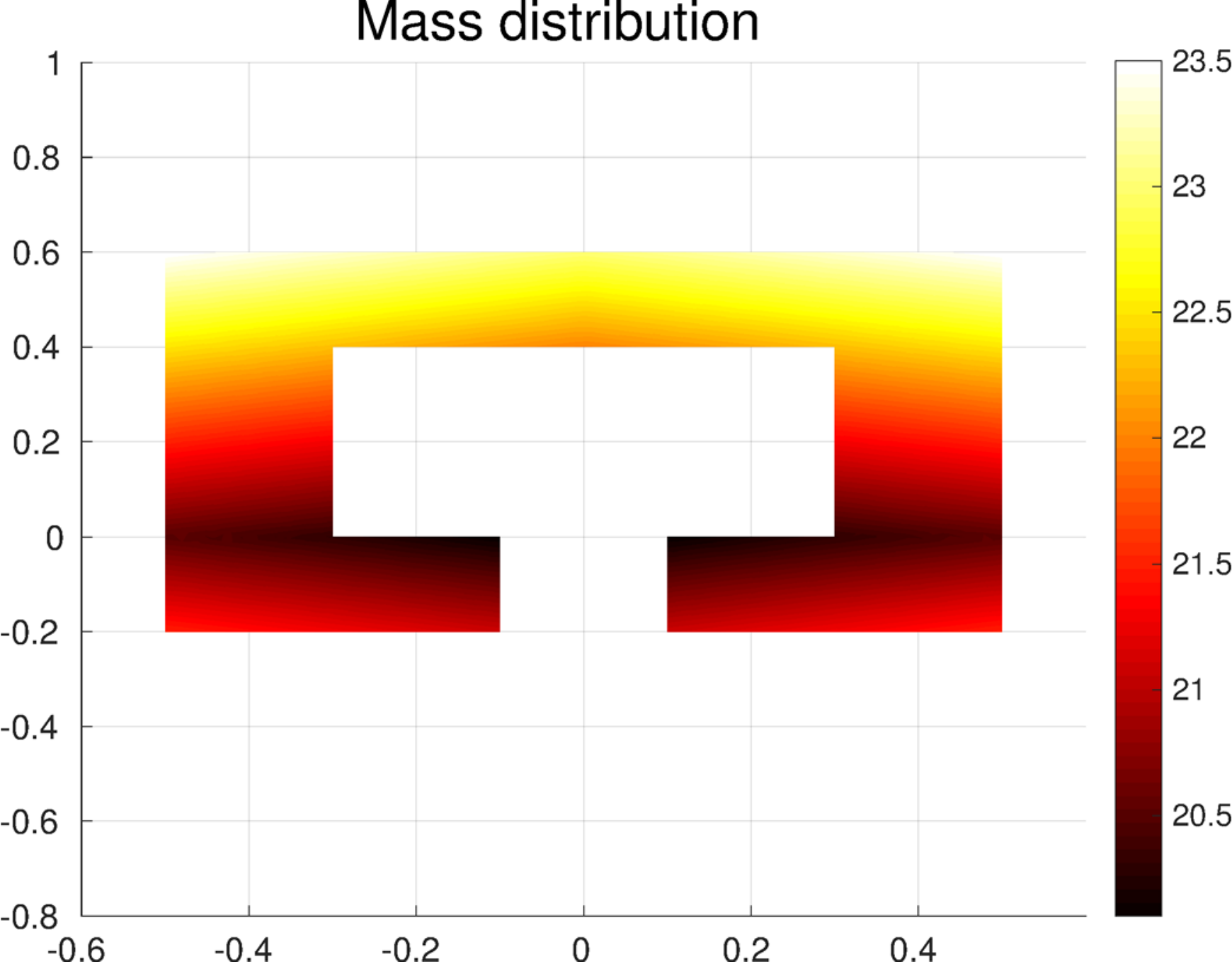}
		\caption{} \label{fig:7.1b}
	\end{subfigure}
	\qquad
	\begin{subfigure}[b]{0.27\textwidth}
		\includegraphics[width=\linewidth]{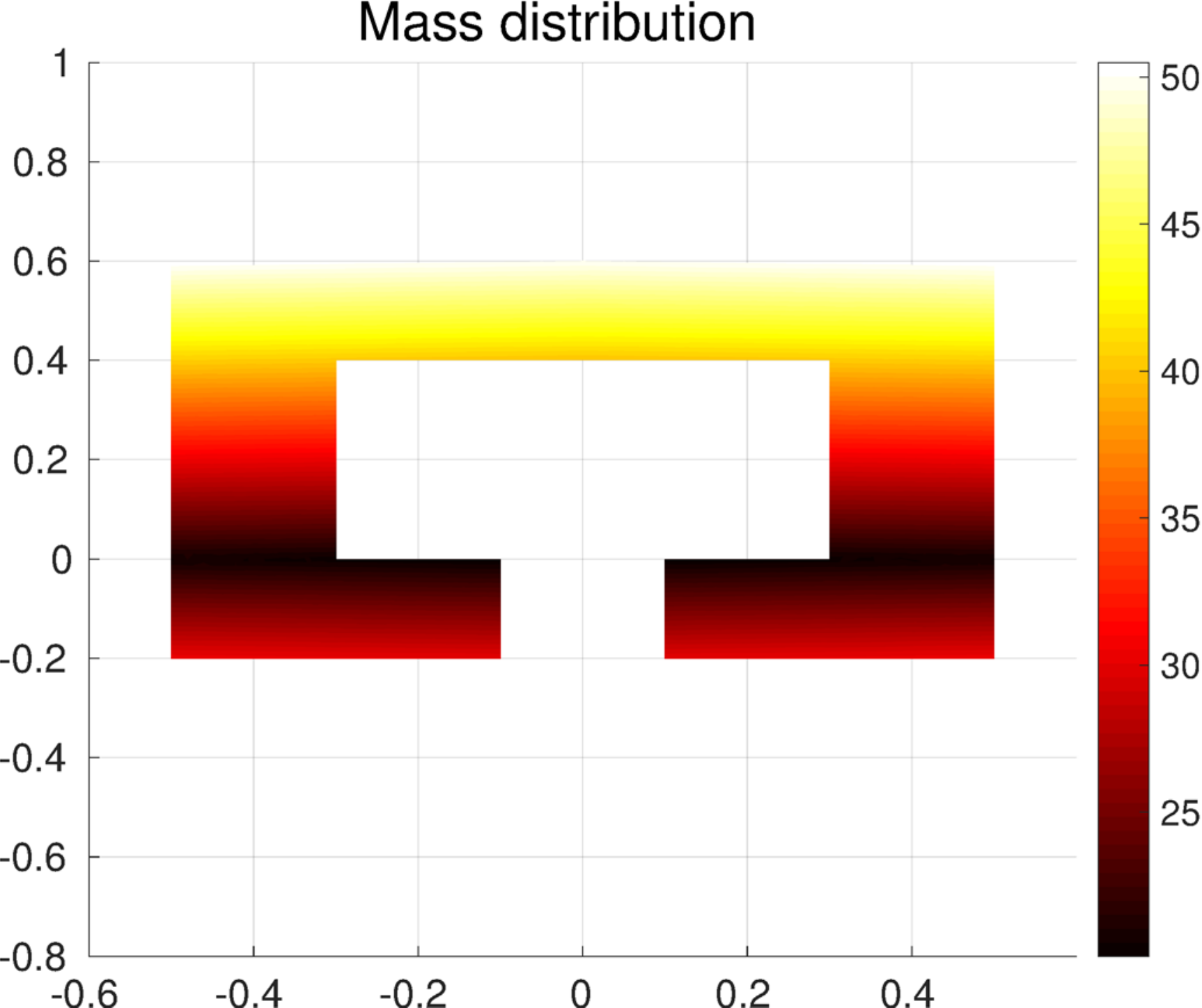}
		\caption{} \label{fig:7.1c}
	\end{subfigure}
}
\caption{{\footnotesize Geometries and mass densities for the scatterers used. For the pentagonal obstacle in (a) the density follows a Gaussian distribution peaking at its center which slows down waves propagating closer to its core. For the trapping geometry (b) the mass changes linearly along the $x$ and $y$ coordinates with a steeper slope in the $y$ direction. The distribution (c) was used for the transition to time-harmonic regime and has a similar behavior as the previous one but with a steeper slope in the $y$ direction.}}\label{fig:7.1}
\end{figure}
The spacial discretization was done using $\mathcal P_3/\mathcal P_2$ continuous/discontinuous Galerkin boundary elements for the acoustic densities and $\mathcal P_3$ Lagrangian finite elements for the elastic and electric unknowns. The simulation was run until a final time $T=6$ using 2000 equispaced points in time for a step size of $\kappa=0.003$ and a MATLAB-generated triangulation with 9024 elements and a mesh parameter of $h=0.014$. The boundary element grid consisted of 232 panels  aligned with the boundary edges of the triangulation. 

Figure \ref{fig:7.2} shows the time dynamics of the $\mathbf L^2(\Omega_-)$ and $\mathbf H^1(\Omega_-)$ norms of the elastic displacement, the $L^2(\Omega_-)$ norm of the electric potential, the $\mathbf L^2(\Omega_-)$ of the electric field, and the $H^{1/2}(\Gamma)$ and $H^{-1/2}(\Gamma)$ norms of the Dirichlet and Neumann acoustic traces of the approximate solutions. These norms provide a qualitative estimate of the energy of the system peaking initially as the incident wave interacts with the obstacle and then decaying in time as the internal reflections release the elastic and electric energy back into the fluid. Figure \ref{fig:7.3} shows the acoustic time signature of the total acoustic wave as measured in ten different points in the fluid.
\begin{figure}\center{\begin{tabular}{ccc}
\includegraphics[width=.31\linewidth]{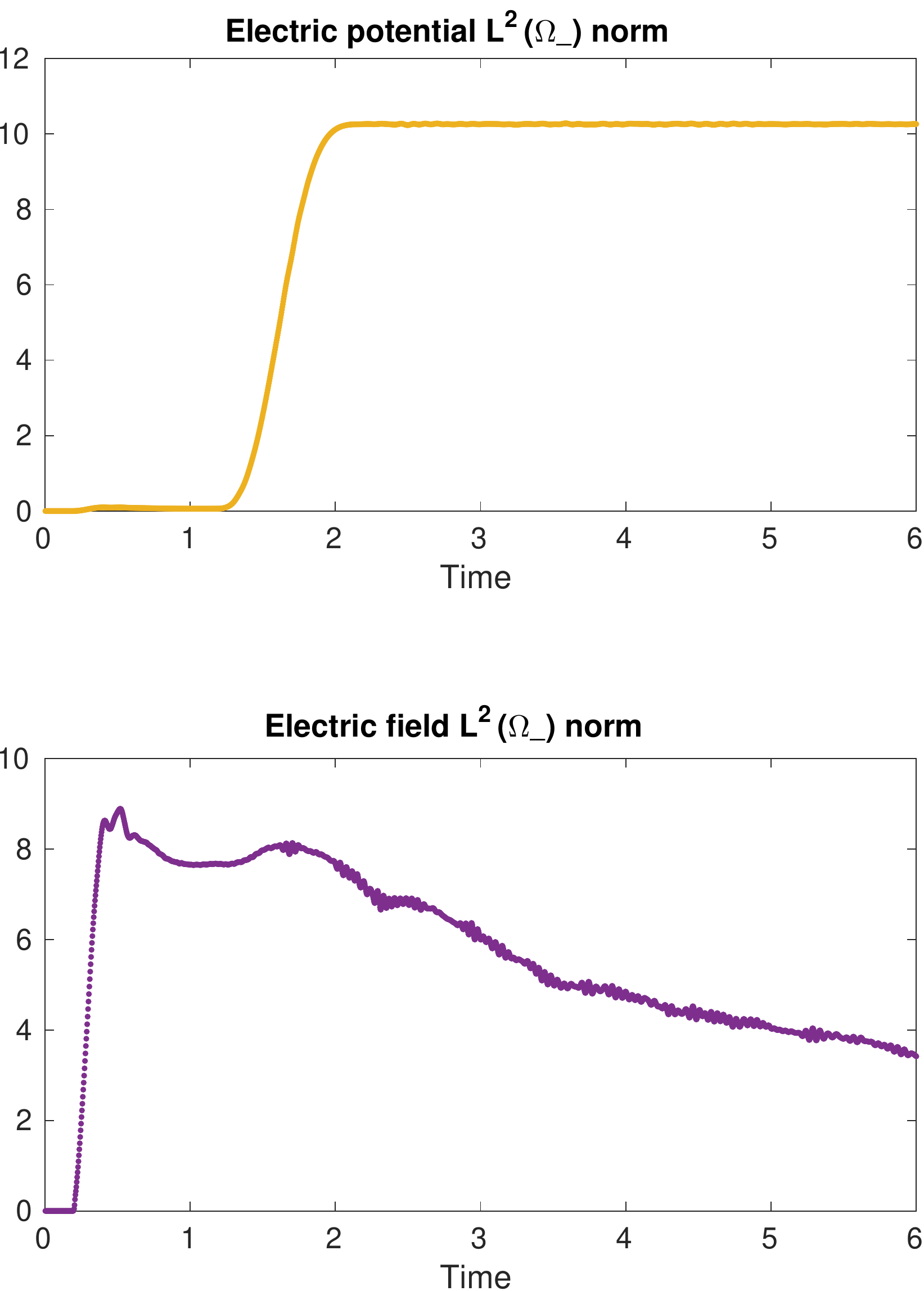} &
\includegraphics[width=.31\linewidth]{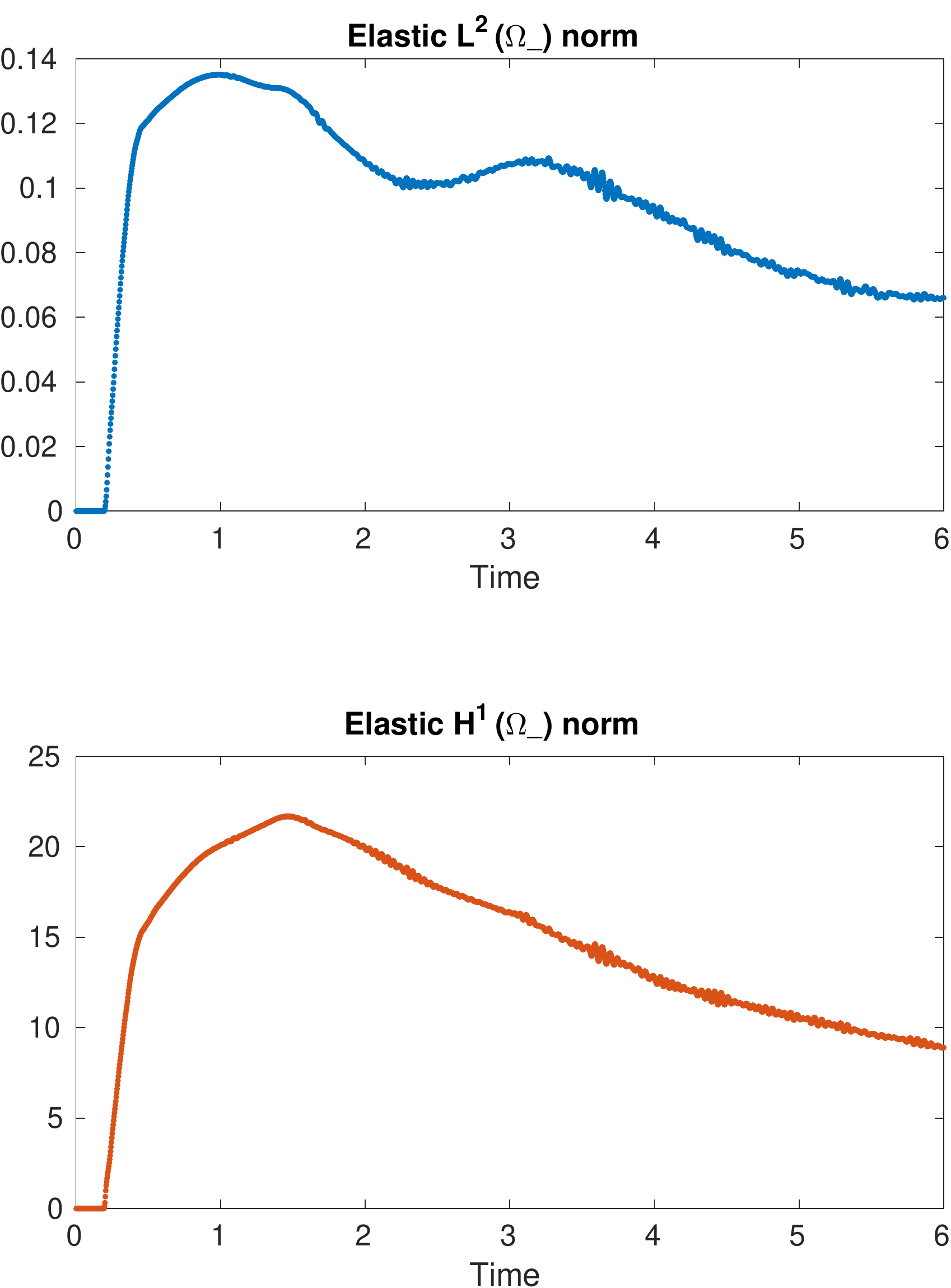} &
\includegraphics[width=.31\linewidth]{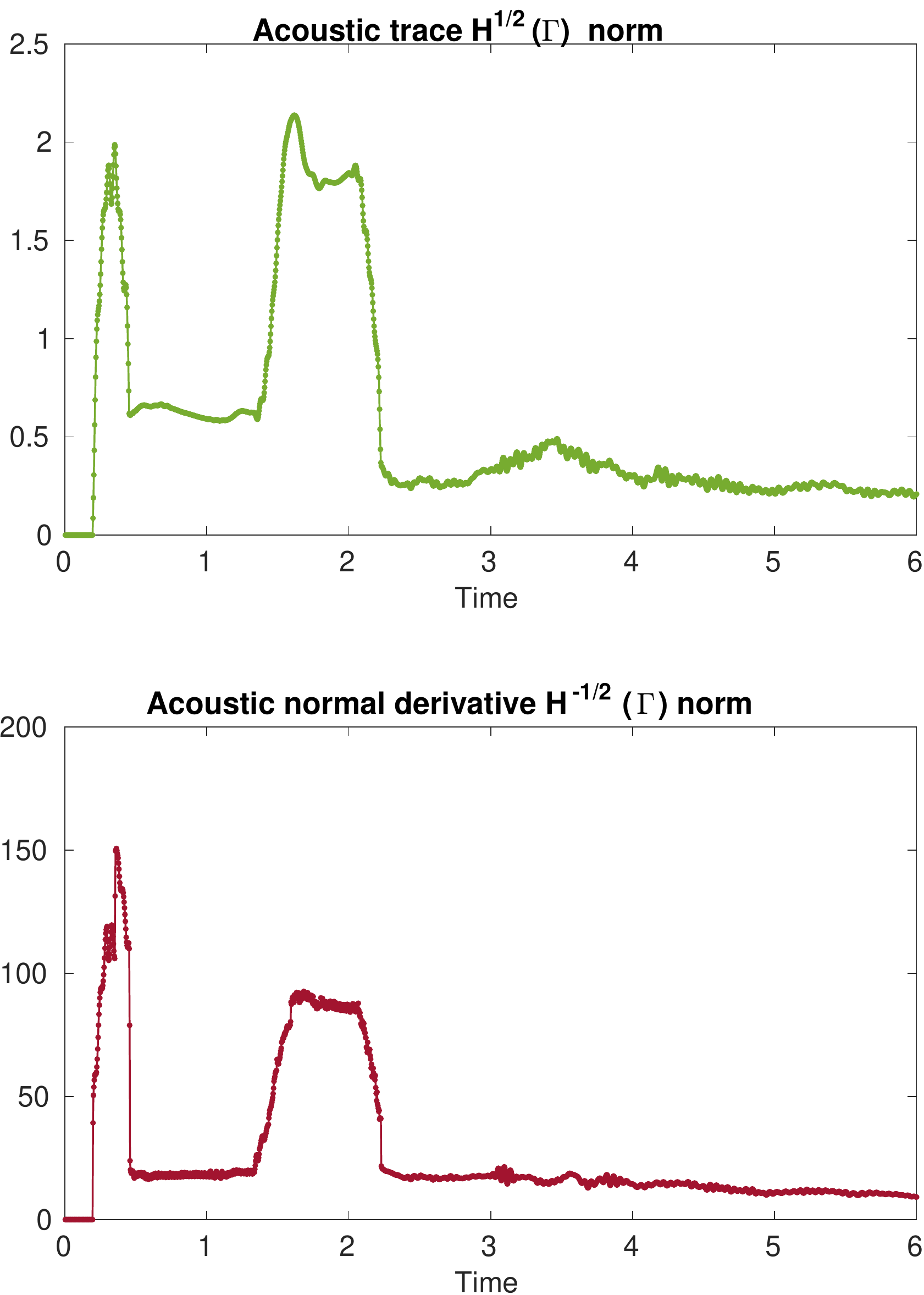}
\end{tabular}}
\caption{{\footnotesize Columnwise from left to right: norms of the electric potential, elastic displacement and acoustic densities as functions of time.}}\label{fig:7.2}
\end{figure}

\begin{figure}\center{ \begin{tabular}{ccc}
\includegraphics[width=.31\linewidth]{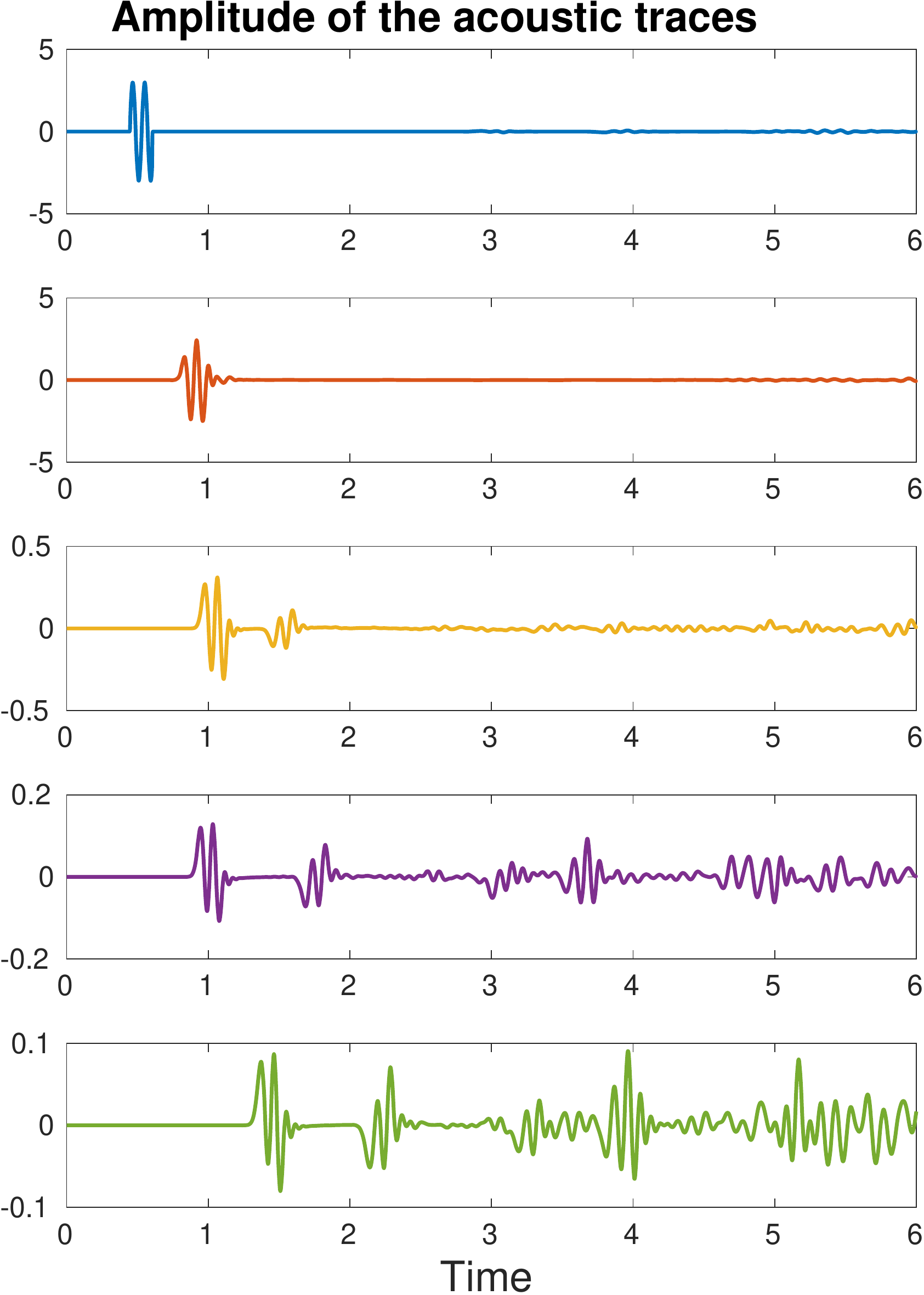} &
\includegraphics[width=.31\linewidth]{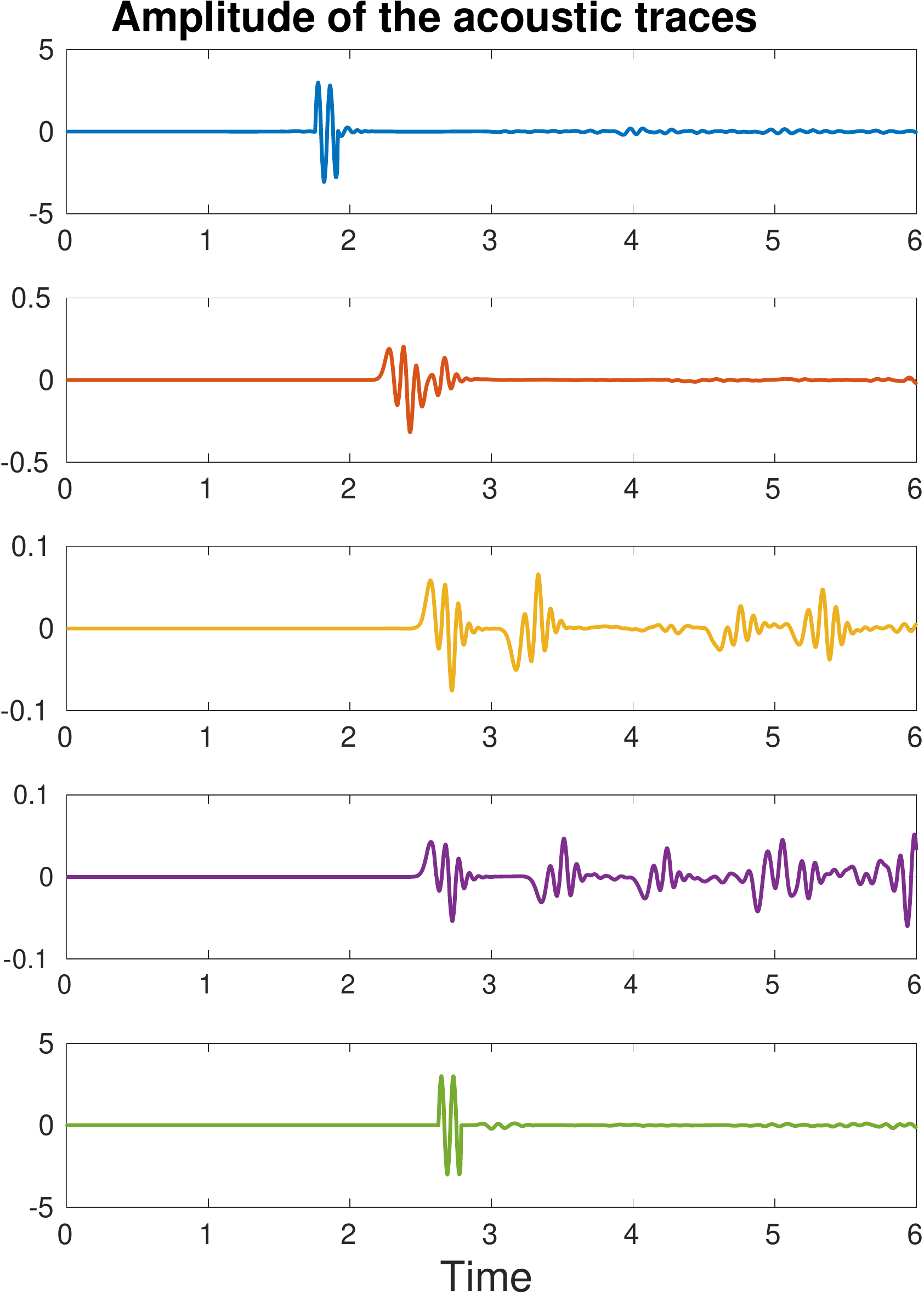} &
\includegraphics[width=.31\linewidth]{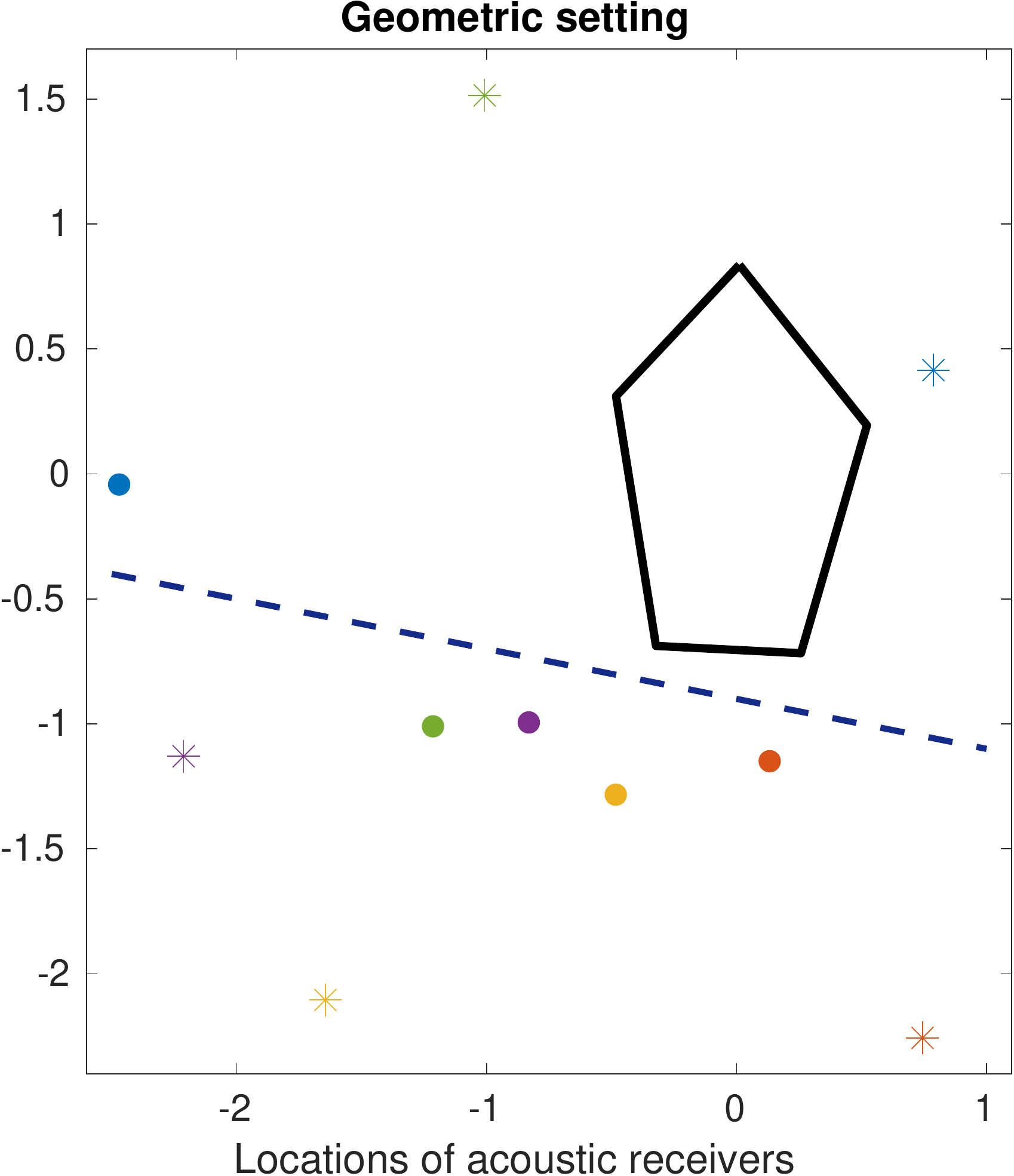}
\end{tabular}}
\caption{{\footnotesize The acoustic time-signatures on the left column correspond to receivers placed at the locations marked with a dot in the schematic, while the ones on the central column correspond to those marked with an asterisk. On the right: Locations of the acoustic receivers; the dotted line represents the initial location of the acoustic pulse which propagates in the direction $\mathbf d=(1,5)$.}}\label{fig:7.3}
\end{figure}

\paragraph{A trapping geometry.} 
The second experiment is done with the trapping geometry shown in Figure \ref{fig:7.1} (b), the density is now
\[
\rho_\Sigma = 20 + |x|+10|y|.
\]
All the remaining physical parameters are given in \eqref{eq:7.2}, while the grounding condition on the potential, imposed on the entire boundary is given by \eqref{eq:7.3} just like in the previous example. The profile of the acoustic pulse is the same as before \eqref{eq:7.1} but its propagation direction is changed to be $\mathbf d:=(-1,1)/\sqrt{2}$. The geometry was meshed with with 2992 triangular elements generated by MATLAB's {\tt pdetool}, with a mesh parameter $h=0.0172$. For the boundary element mesh we used 236 panels aligned with the finite element boundary edges. Hence the mesh approximation properties are similar to those of the previous example. We used $\mathcal P_3/\mathcal P_2$ continuous/discontinuous Galerkin boundary elements for the boundary element unknowns and $\mathcal P_3$ Lagrangian finite elements for the elastic displacement and electric potential.

Time discretization was done with the exact same parameters as before, final time $T=6$, 4000 equispaced points with step size $\kappa=0.0015$.  The behavior of the elastic, electric, and acoustic norms of the discrete solution are shown in \Cref{fig:7.4}, while the time signature of the total acoustic wave, at ten different points of the fluid and the locations of the receivers are given in Figure \ref{fig:7.5}. 

\begin{figure}\center{\begin{tabular}{ccc}
\includegraphics[width=.31\linewidth]{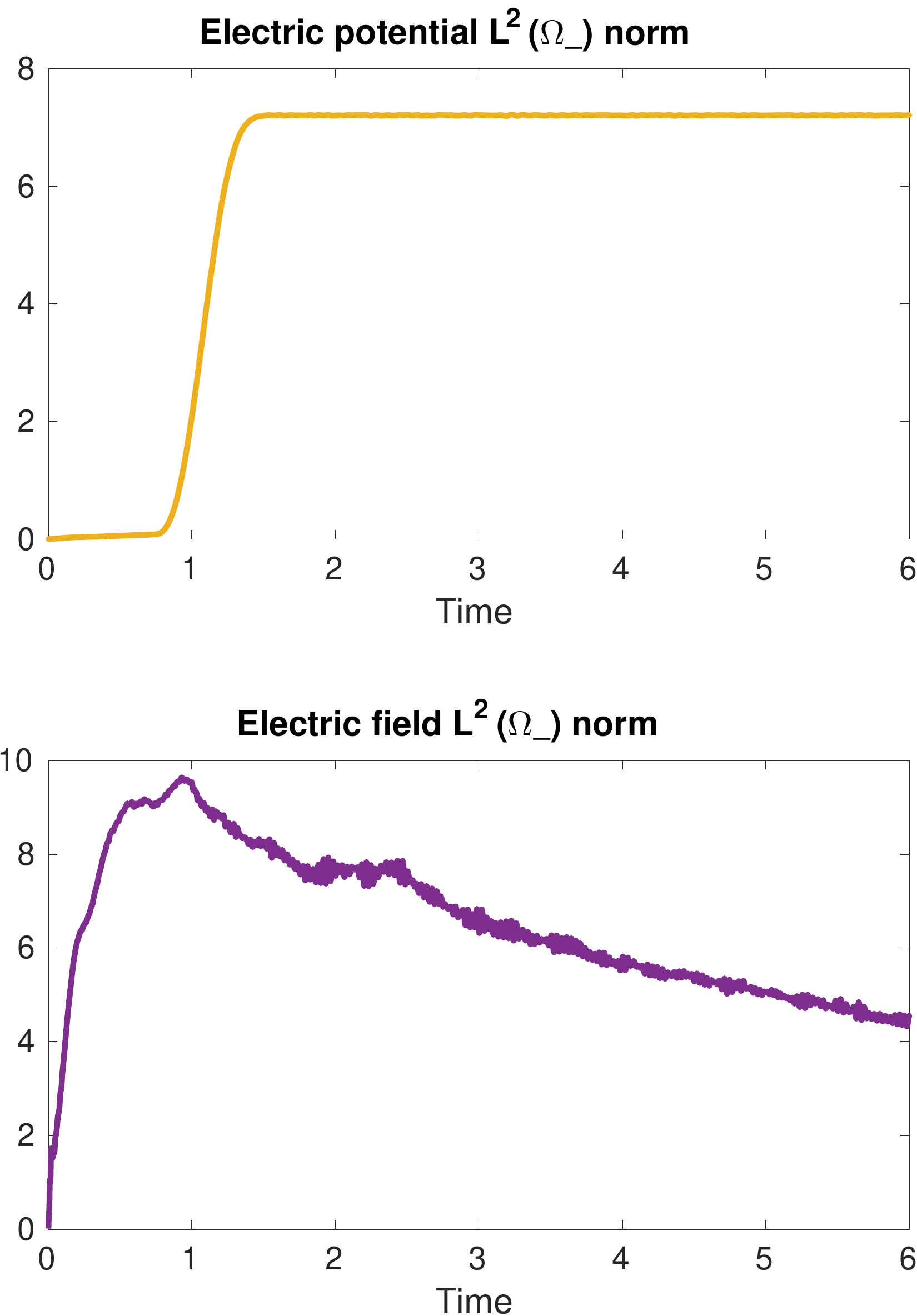} &
\includegraphics[width=.31\linewidth]{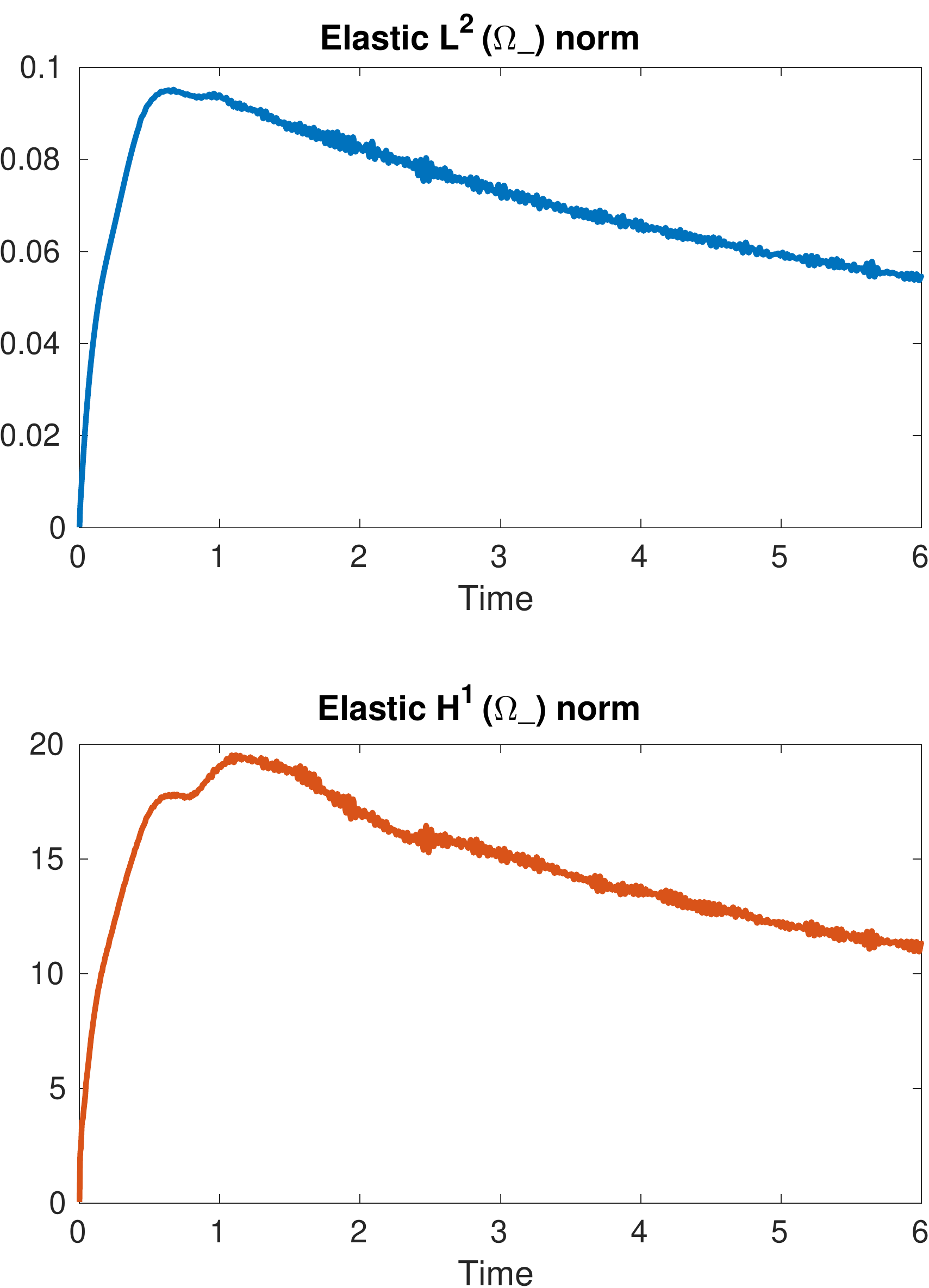} &
\includegraphics[width=.31\linewidth]{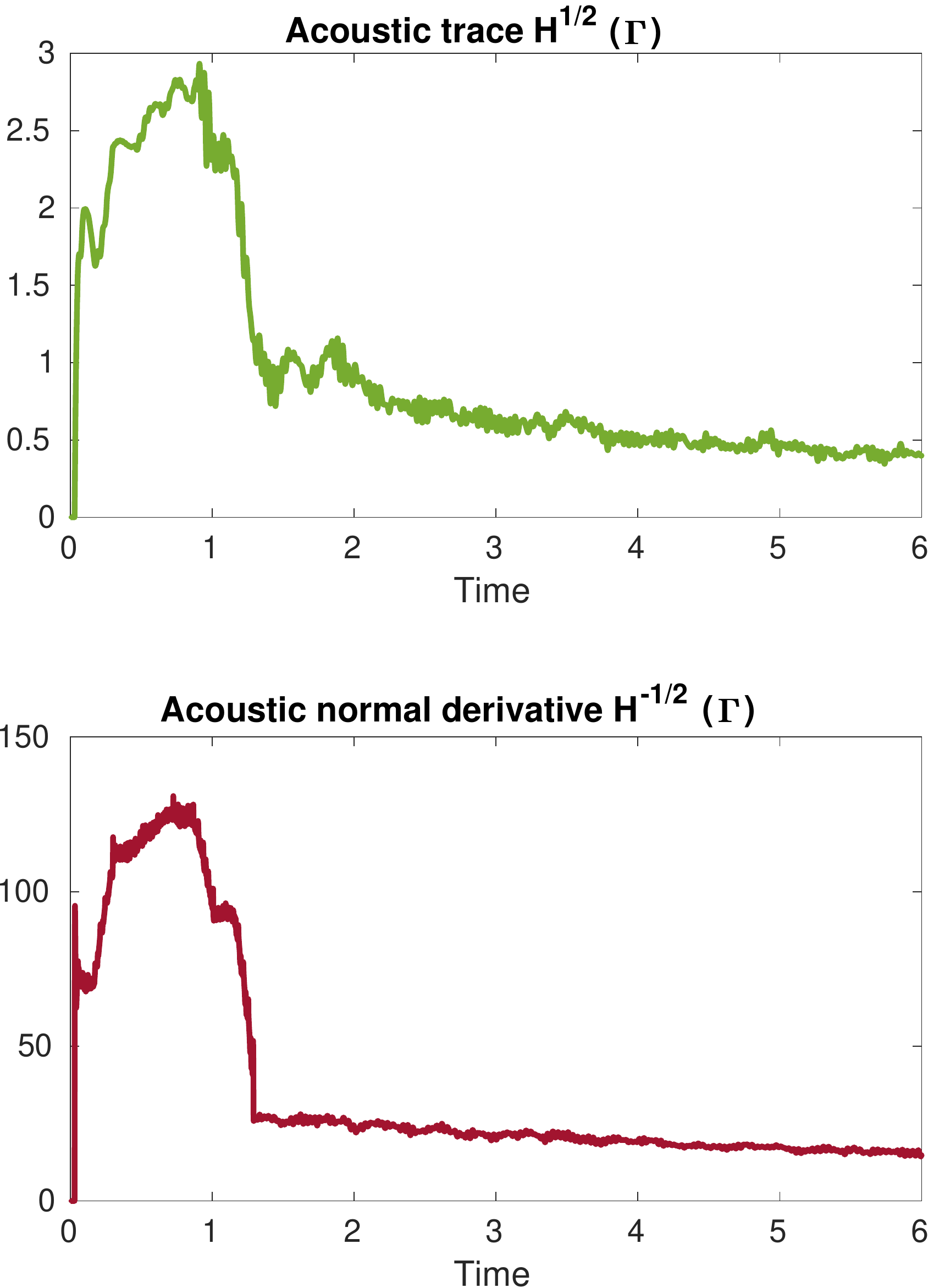}
\end{tabular}}
\caption{{\footnotesize Columnwise from left to right: norms of the electric potential, elastic displacement and acoustic densities as functions of time.}}\label{fig:7.4}
\end{figure}

\begin{figure}\center{\begin{tabular}{ccc}
\includegraphics[width=.31\linewidth]{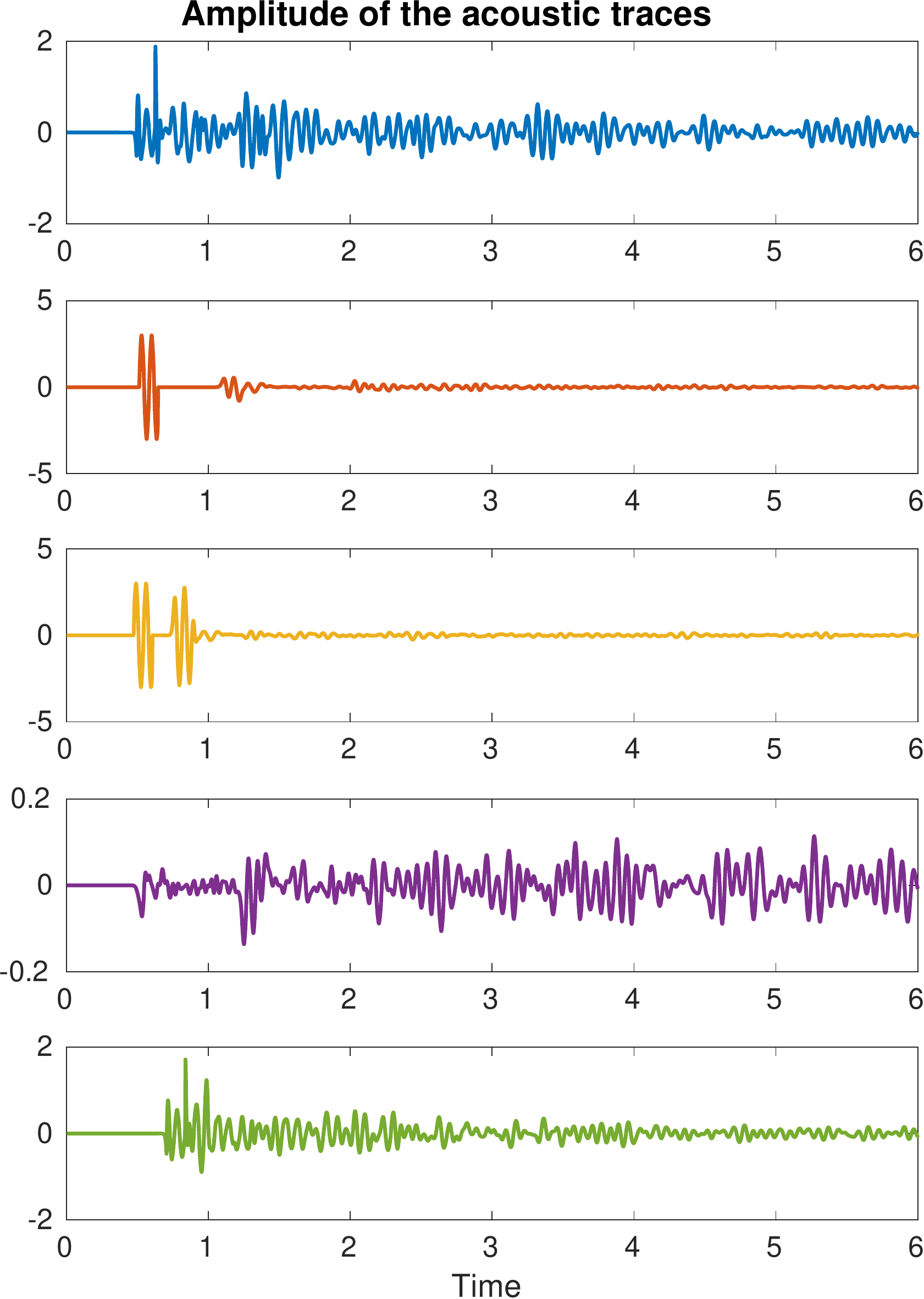}\,
\includegraphics[width=.31\linewidth]{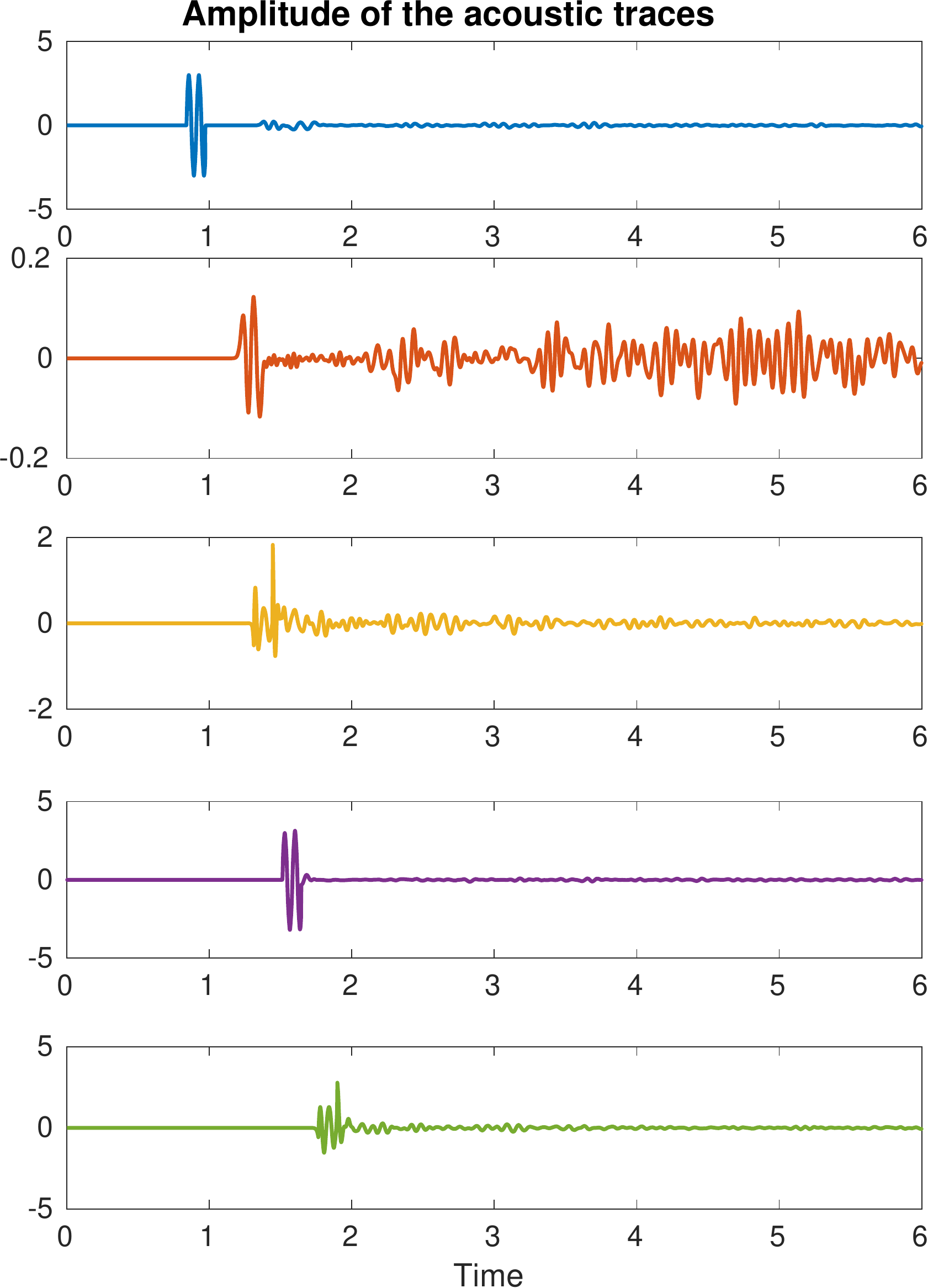}\,
\includegraphics[width=.31\linewidth]{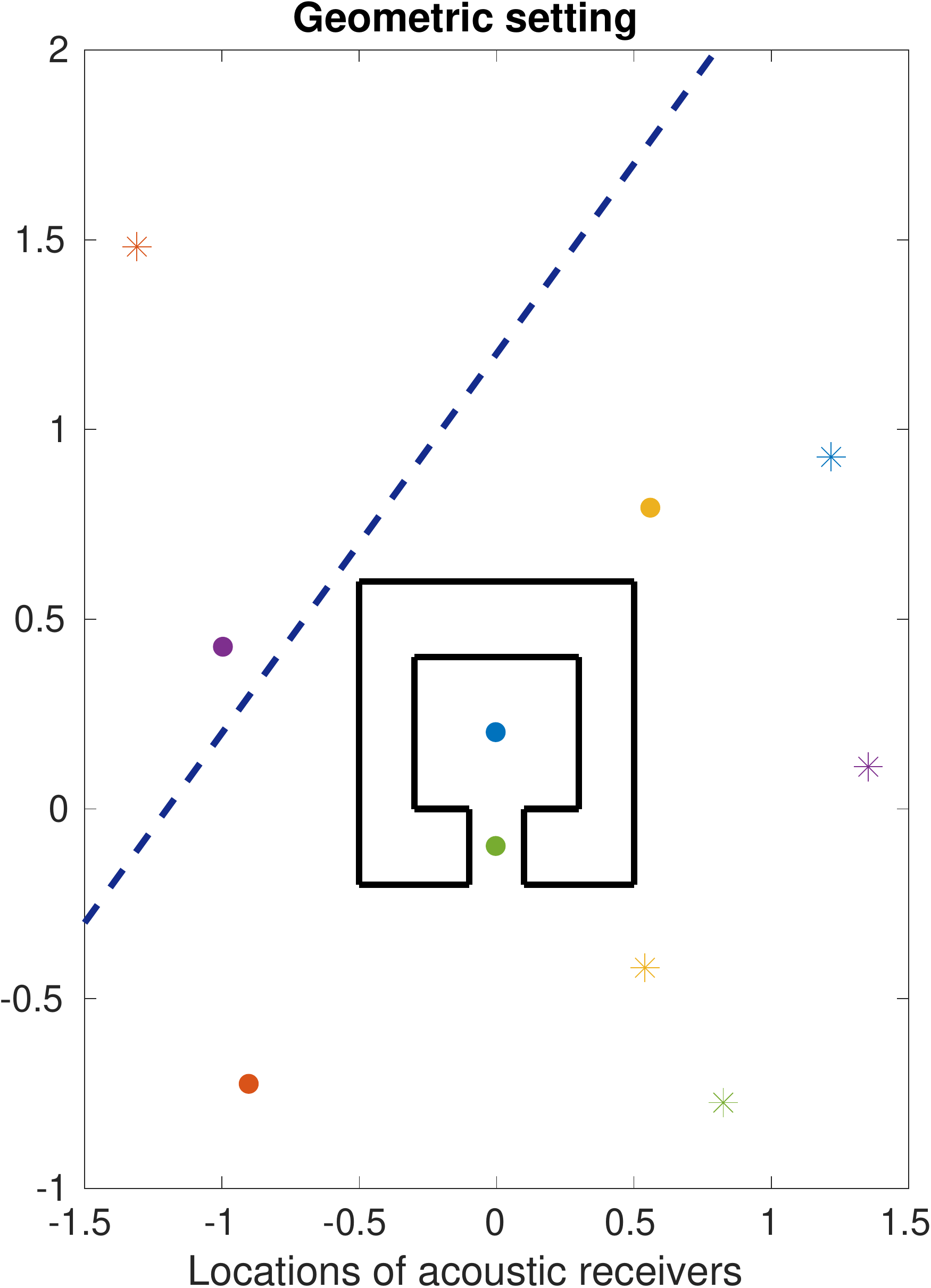}
\end{tabular}}
\caption{{\footnotesize The acoustic time-signatures on the left column correspond to receivers placed at the locations marked with a dot in the schematic, while the ones on the central column correspond to those marked with an asterisk. On the right: Locations of the acoustic receivers; the dotted line represents the initial location of the acoustic pulse which propagates in the direction $\mathbf d =(1,-1)$.}}\label{fig:7.5}
\end{figure}

\paragraph{Transition into time-harmonic.} 
We consider now the case where a causal sinusoidal wave (as opposed to the windowed pulses from the previous two experiments) given by 
\[
v^{inc} = 3\mathcal H(\tau)\sin(6\pi\tau)\,, \quad \tau:= t-\mathbf r\cdot\mathbf d\,, \quad \mathbf r:=(x,y)\,, \mathbf d := (-1,1)/\sqrt{2},
\]
impinges into the trapping scatterer depicted in Figure \ref{fig:7.5} (c) with mass distribution
\[
\rho_\Sigma = 20 + |x|+50|y|,
\]
and grounding condition on the potential on the entire boundary given by \eqref{eq:7.3}.  Since the incident wave is no longer compactly supported in time, the wavefield is expected to transition into the time-harmonic regime. The interaction is simulated for $t\in[0,7]$ using the same geometry as in the previous example, BDF2-CQ with a time step of size $\kappa=1.5\times 10^{-3}$ and mesh parameter $h=0.0172$, but with increased polynomial degree $\mathcal P_4$ for the Lagrangian finite element mesh and $\mathcal P_4/\mathcal P_3$ continuous/discontinuous Galerkin BEM for the acoustic wave.

As a measure of the energy in the system, we plot the $\mathbf L^2(\Omega_-)$, $L^2(\Omega_-)$, and $\mathbf H^1(\Omega_-)$ norms of the approximate finite element solutions and the $H^{-1/2}(\Gamma)$ and $H^{1/2}(\Gamma)$ norms of the approximate acoustic densities in Figure \ref{fig:7.6}. Regarding the acoustic wavefield, Figure \ref{fig:7.7} shows the total acoustic signal at 10 different locations and the geometric setup of the experiment. Snapshots of the total acoustic field and the magnitude of the elastic displacement are shown in Figure \ref{fig:7.8} at different stages of the transition.

\begin{figure}\center{\begin{tabular}{ccc}
\includegraphics[width=.31\linewidth]{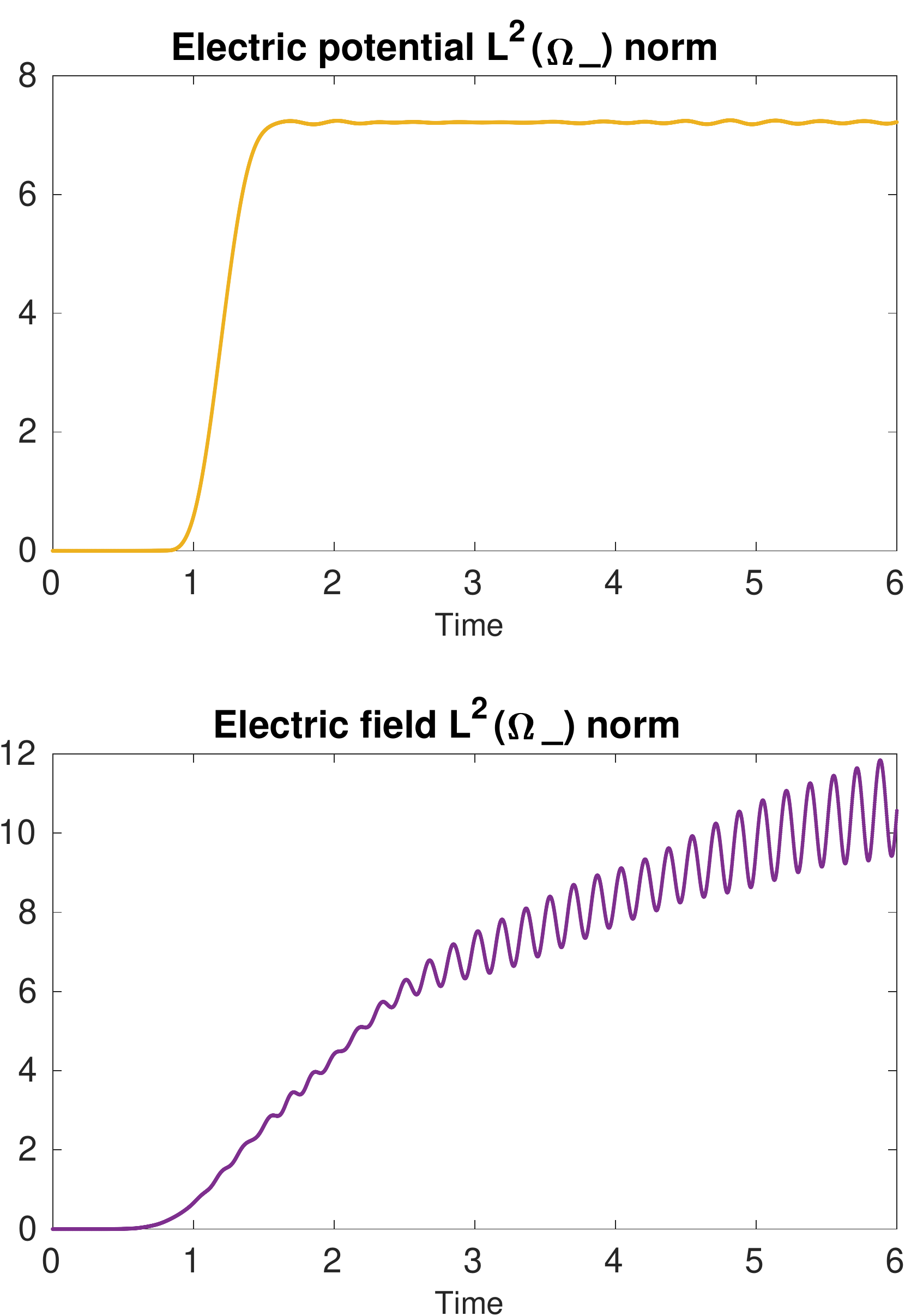} &
\includegraphics[width=.31\linewidth]{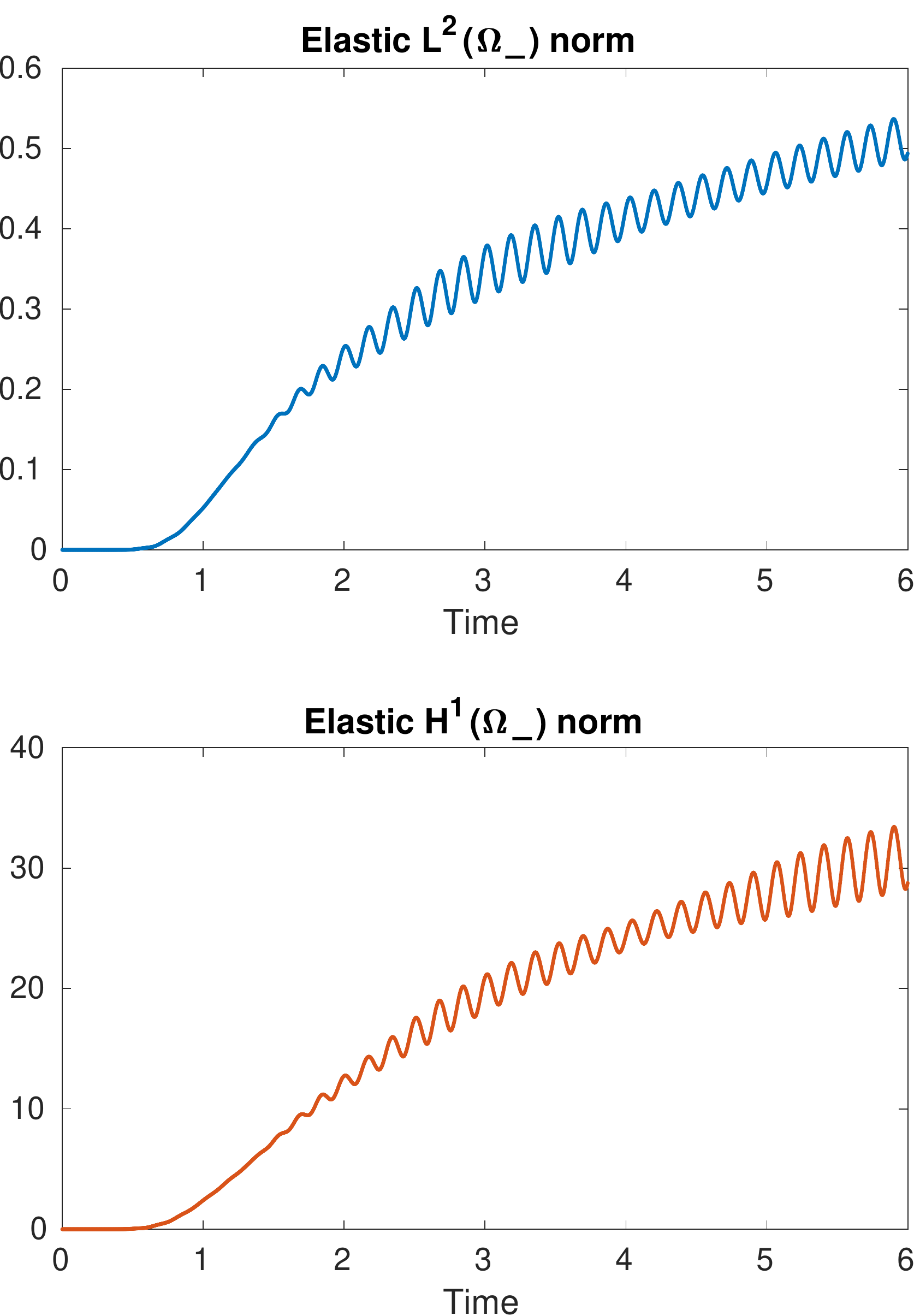} &
\includegraphics[width=.31\linewidth]{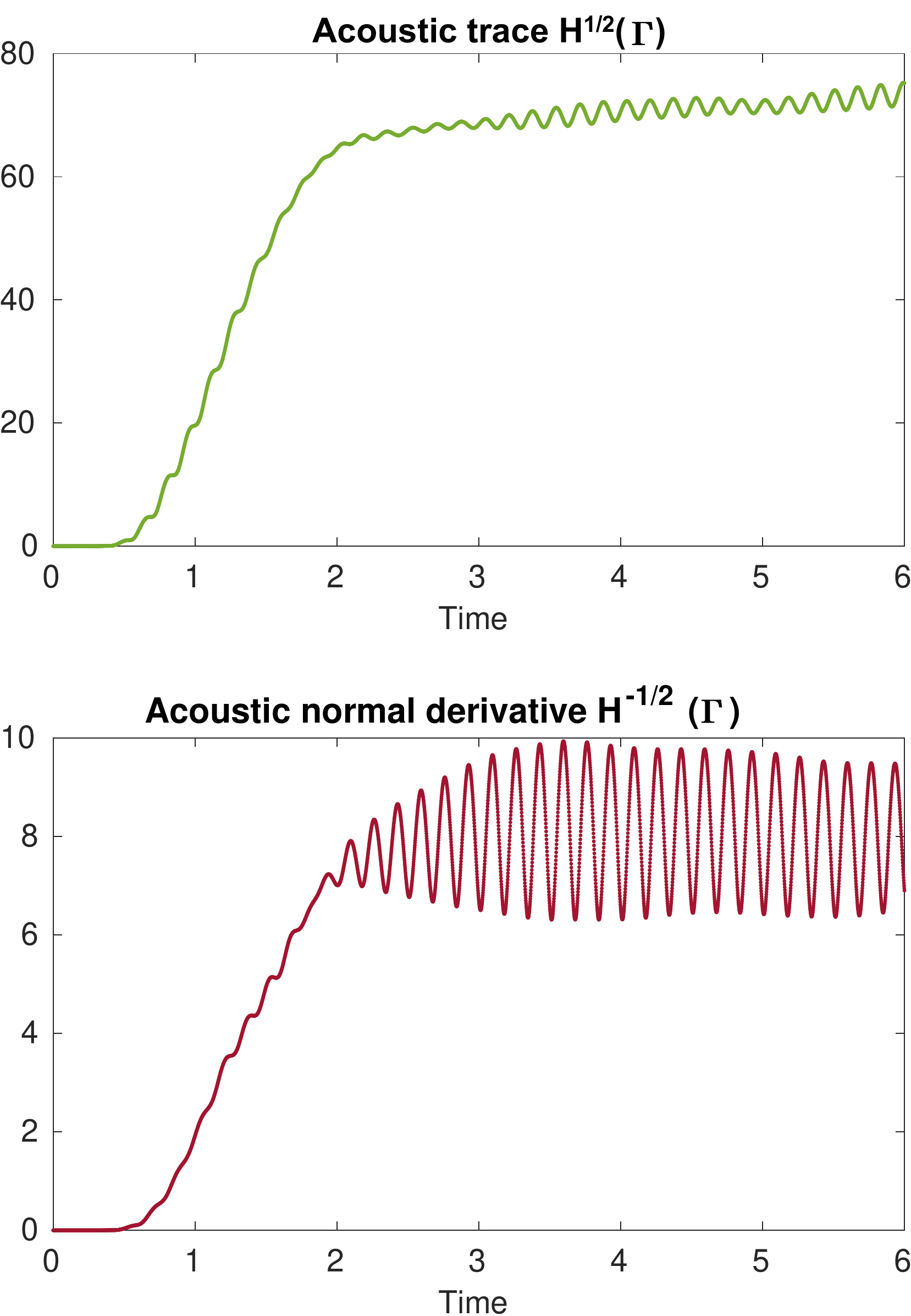}
\end{tabular}}
\caption{{\footnotesize Columnwise from left to right: norms of the electric potential, elastic displacement and acoustic densities as functions of time.}}\label{fig:7.6}
\end{figure}

\begin{figure}\center{\begin{tabular}{ccc}
\includegraphics[width=.31\linewidth]{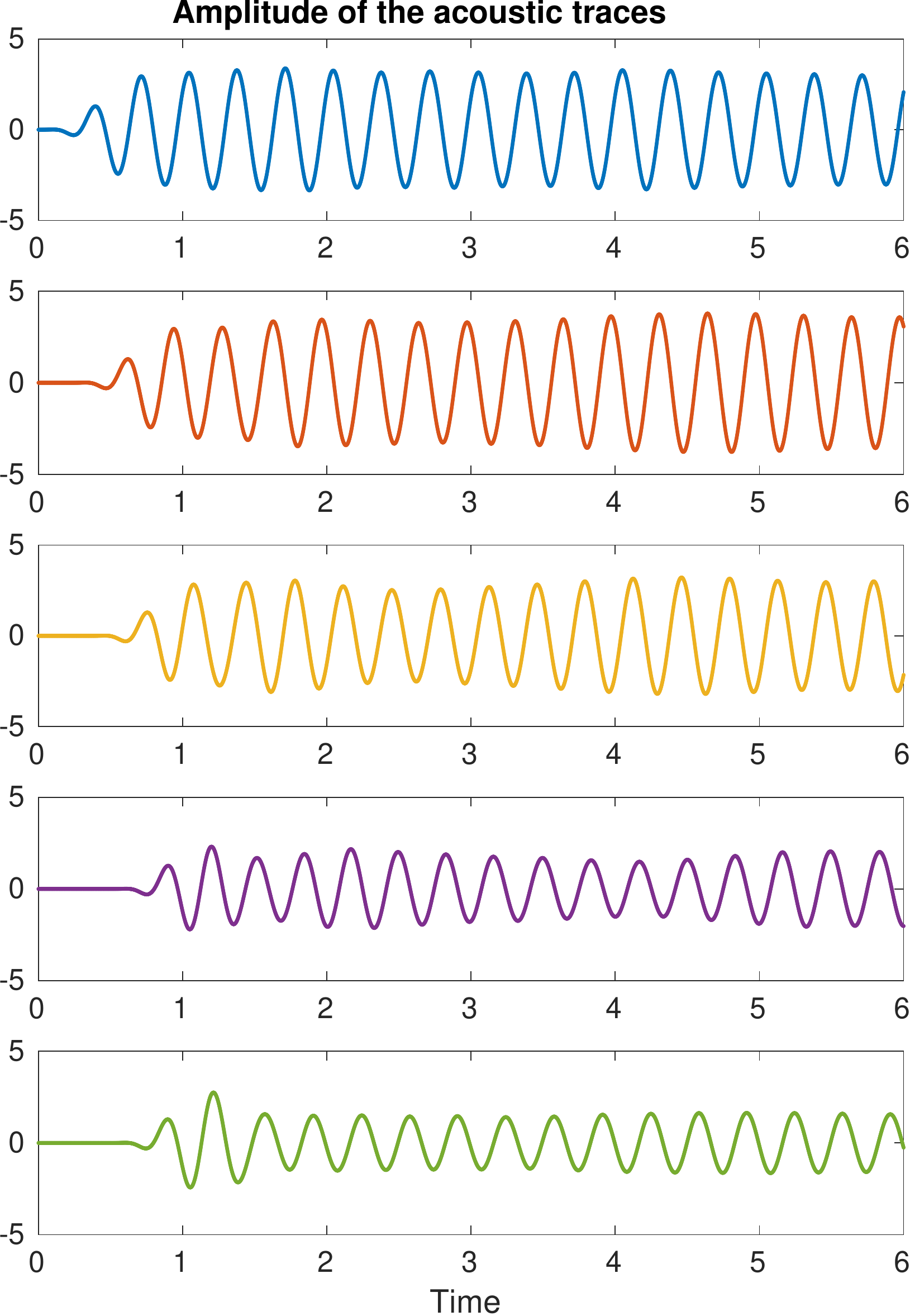}\,
\includegraphics[width=.31\linewidth]{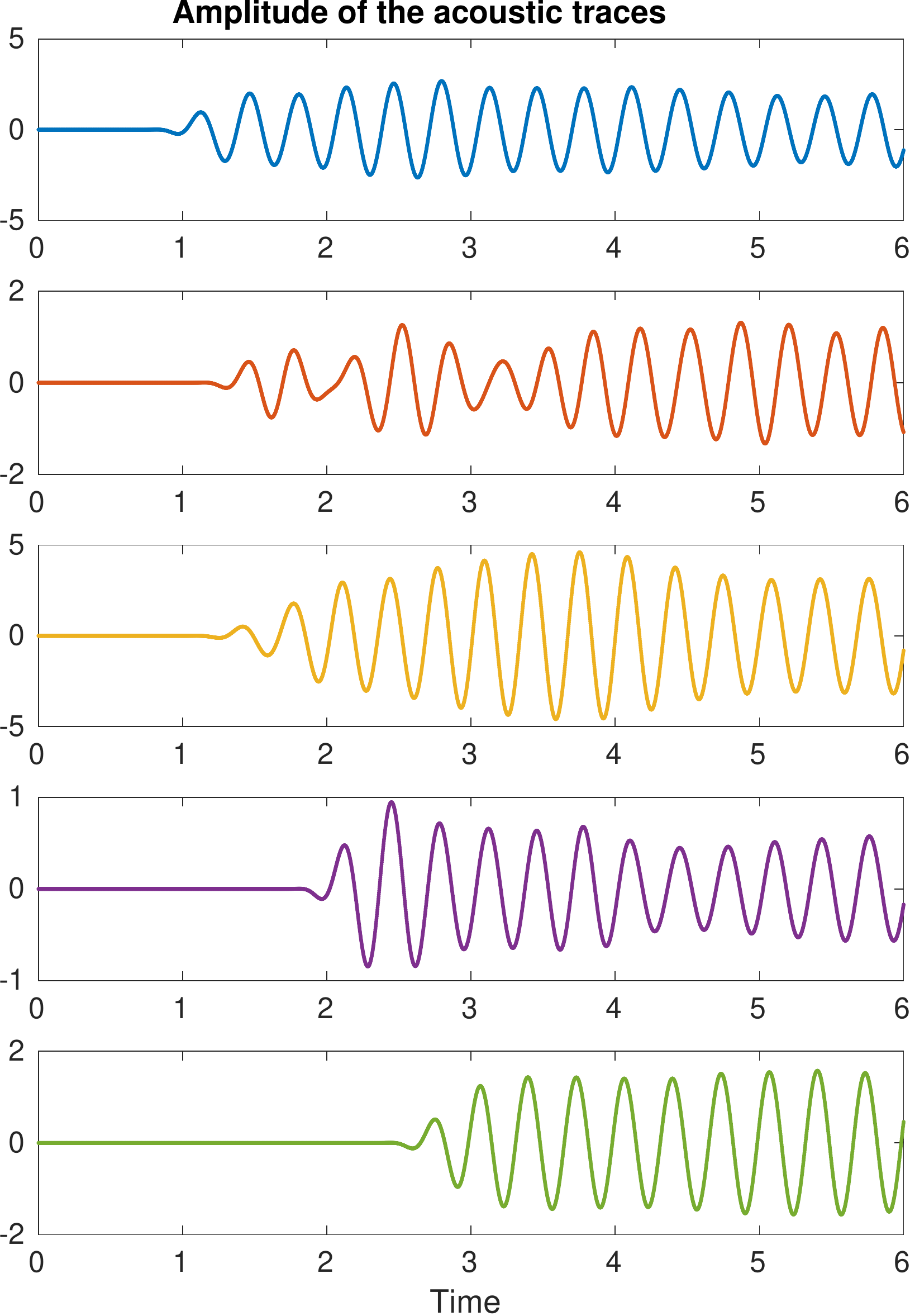}\,
\includegraphics[width=.31\linewidth]{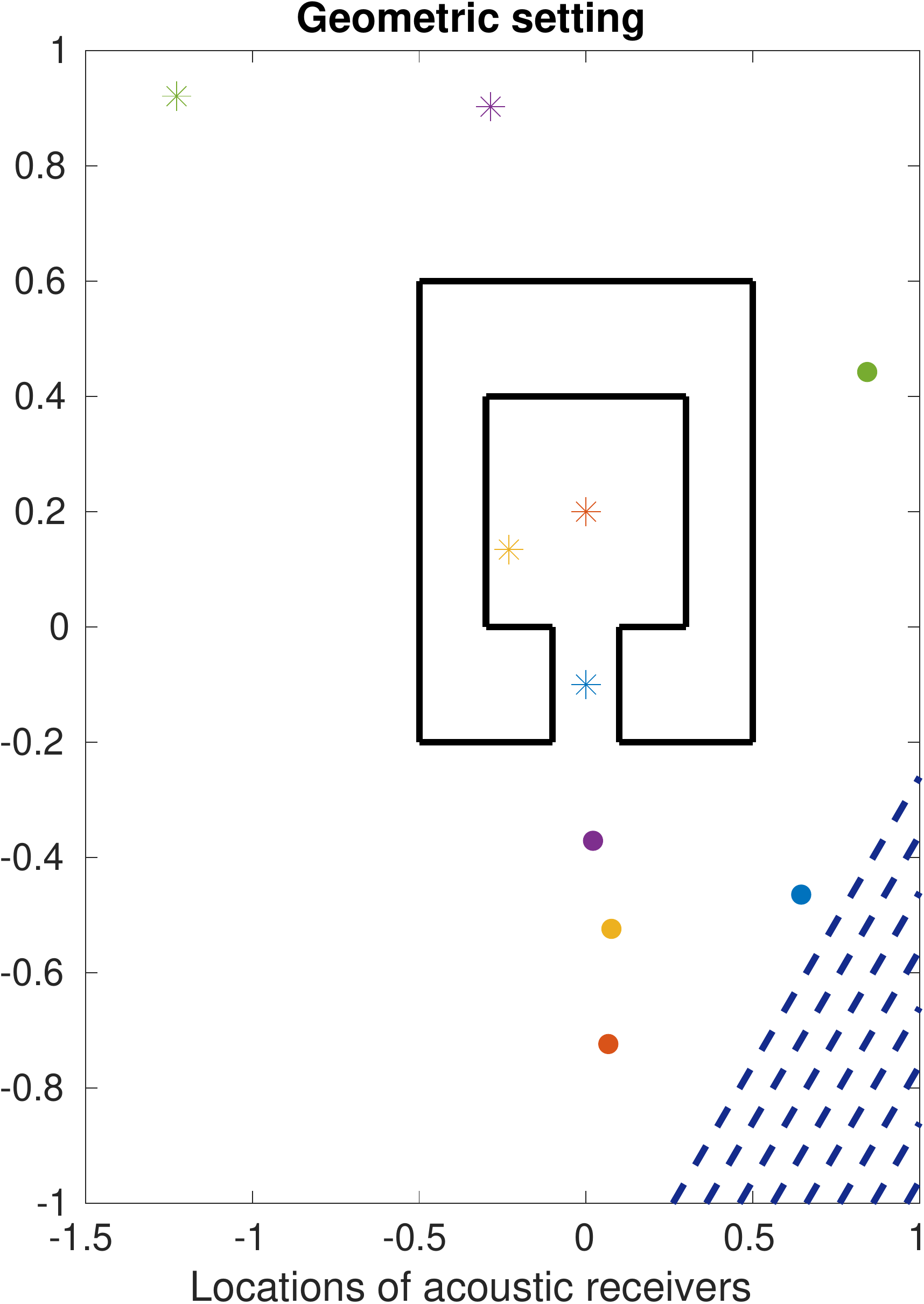}
\end{tabular}}
\caption{{\footnotesize The acoustic time-signatures on the left column correspond to receivers placed at the locations marked with a dot in the schematic, while the ones on the central column correspond to those marked with an asterisk. On the right: Locations of the acoustic receivers; the dotted lines represent the initial location of the acoustic wave which propagates in the direction $\mathbf d =(-1,1)$.}}\label{fig:7.7}
\end{figure}

\begin{figure}\center{\begin{tabular}{ccc}
\includegraphics[width=.31\linewidth]{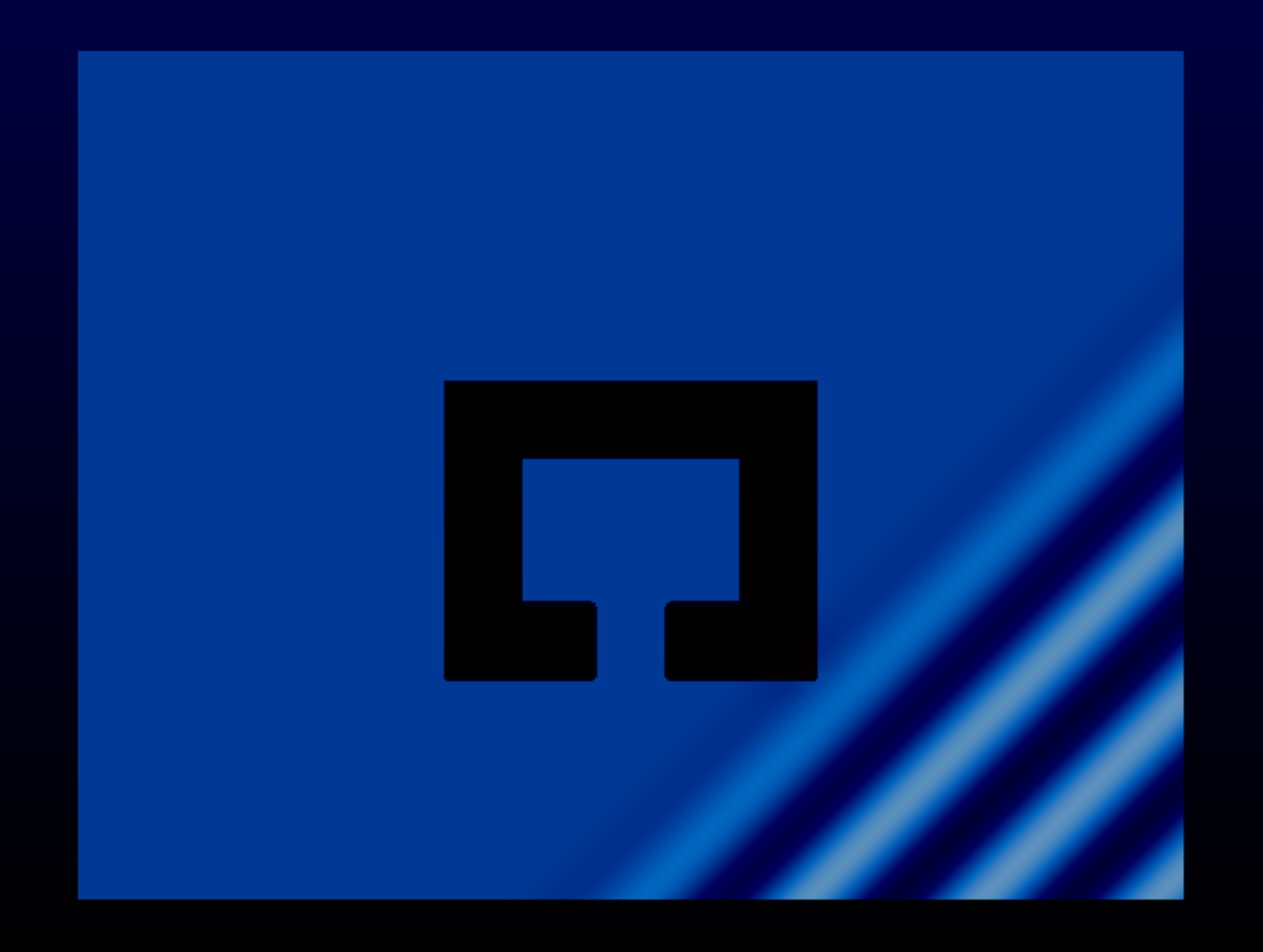}  &
\includegraphics[width=.31\linewidth]{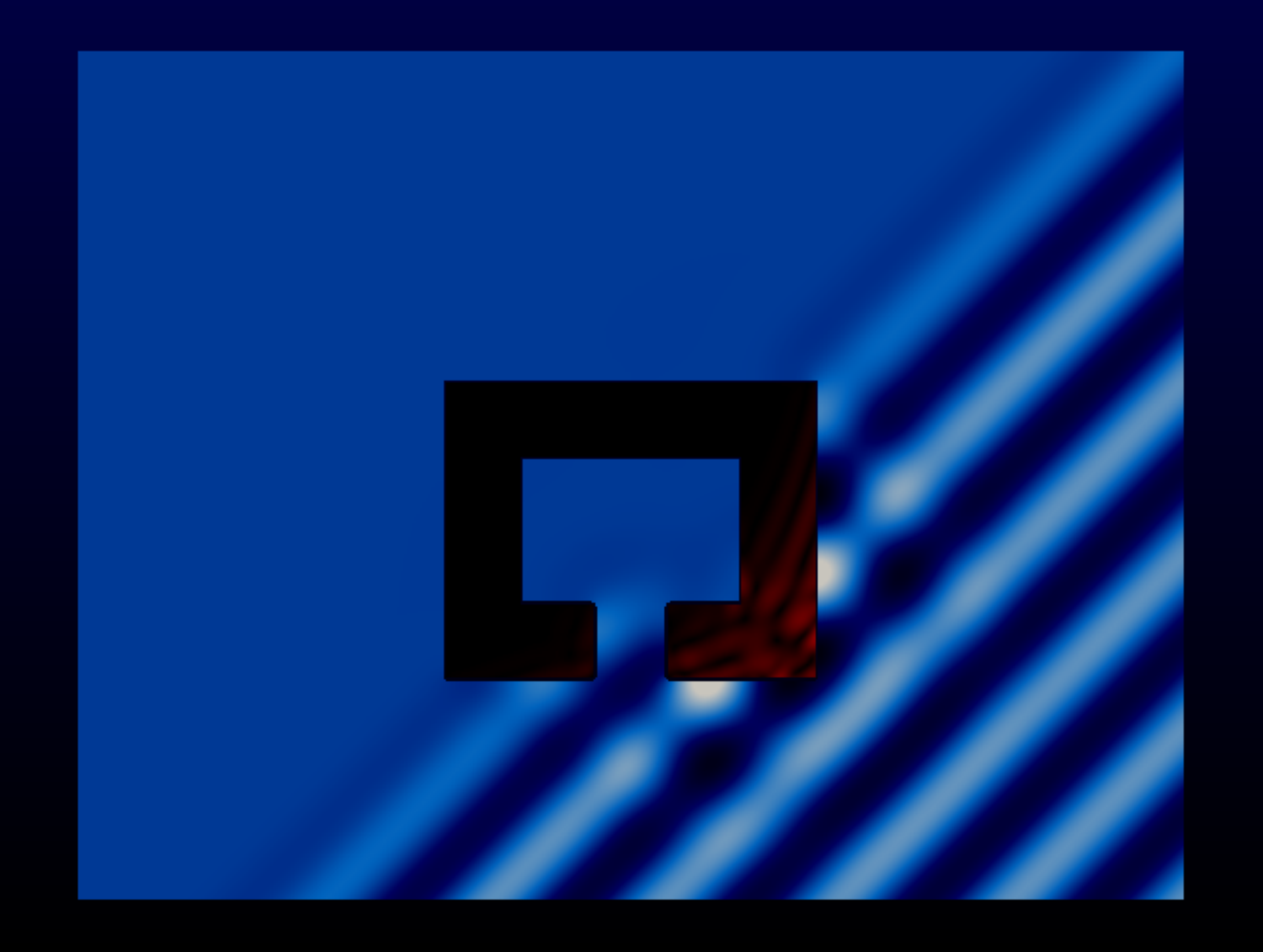} &
\includegraphics[width=.31\linewidth]{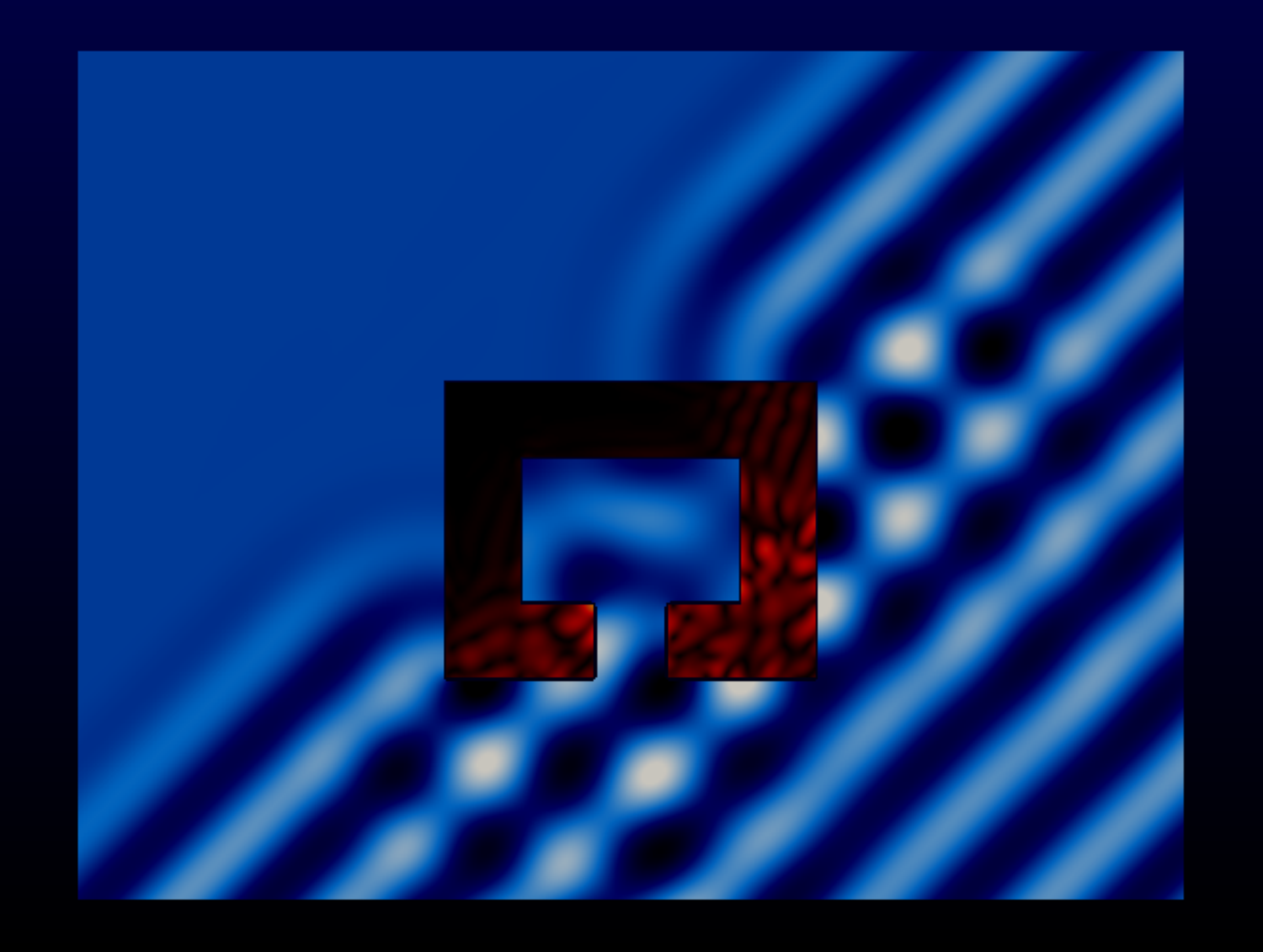}\\
\includegraphics[width=.31\linewidth]{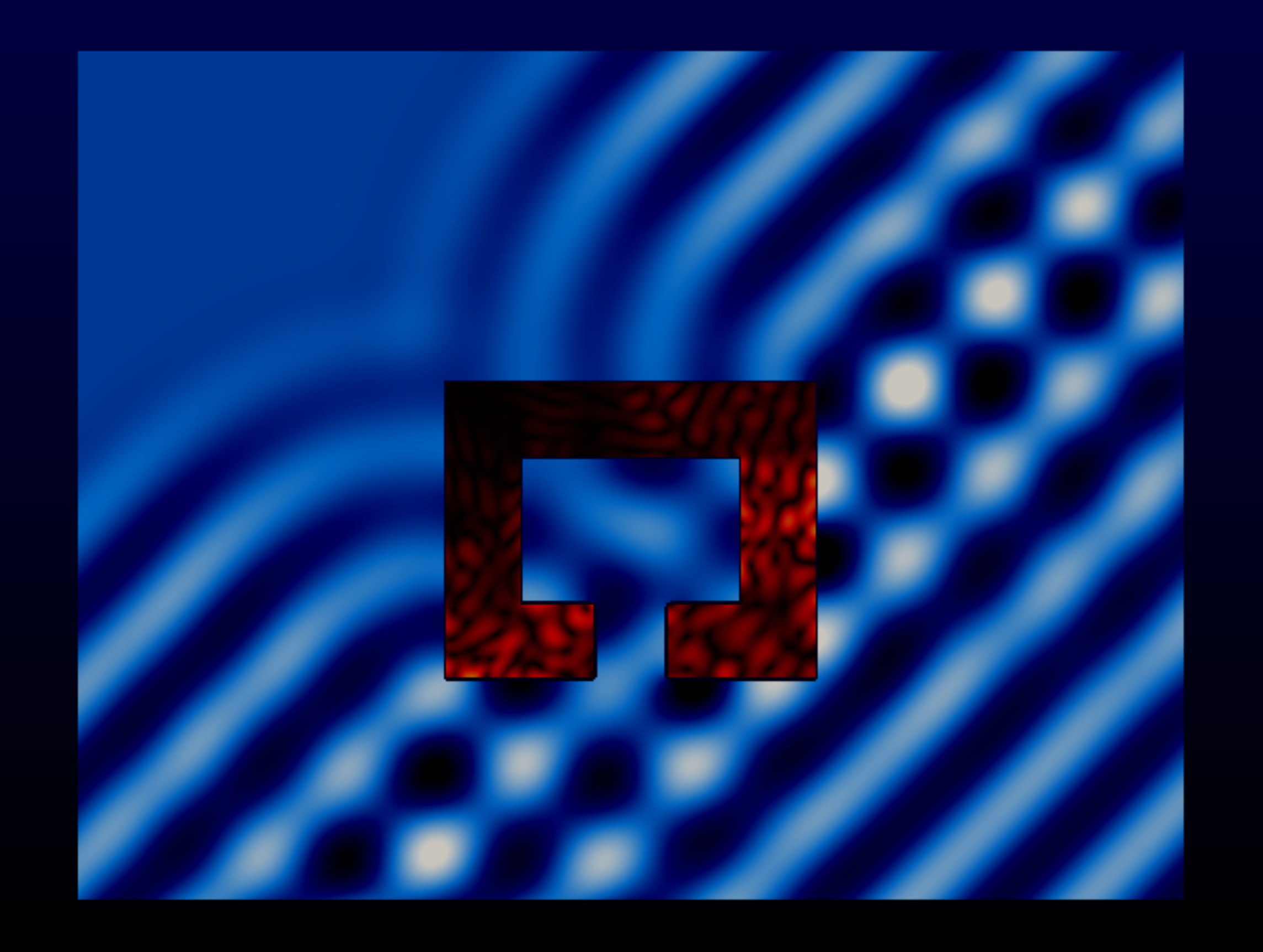} &
\includegraphics[width=.31\linewidth]{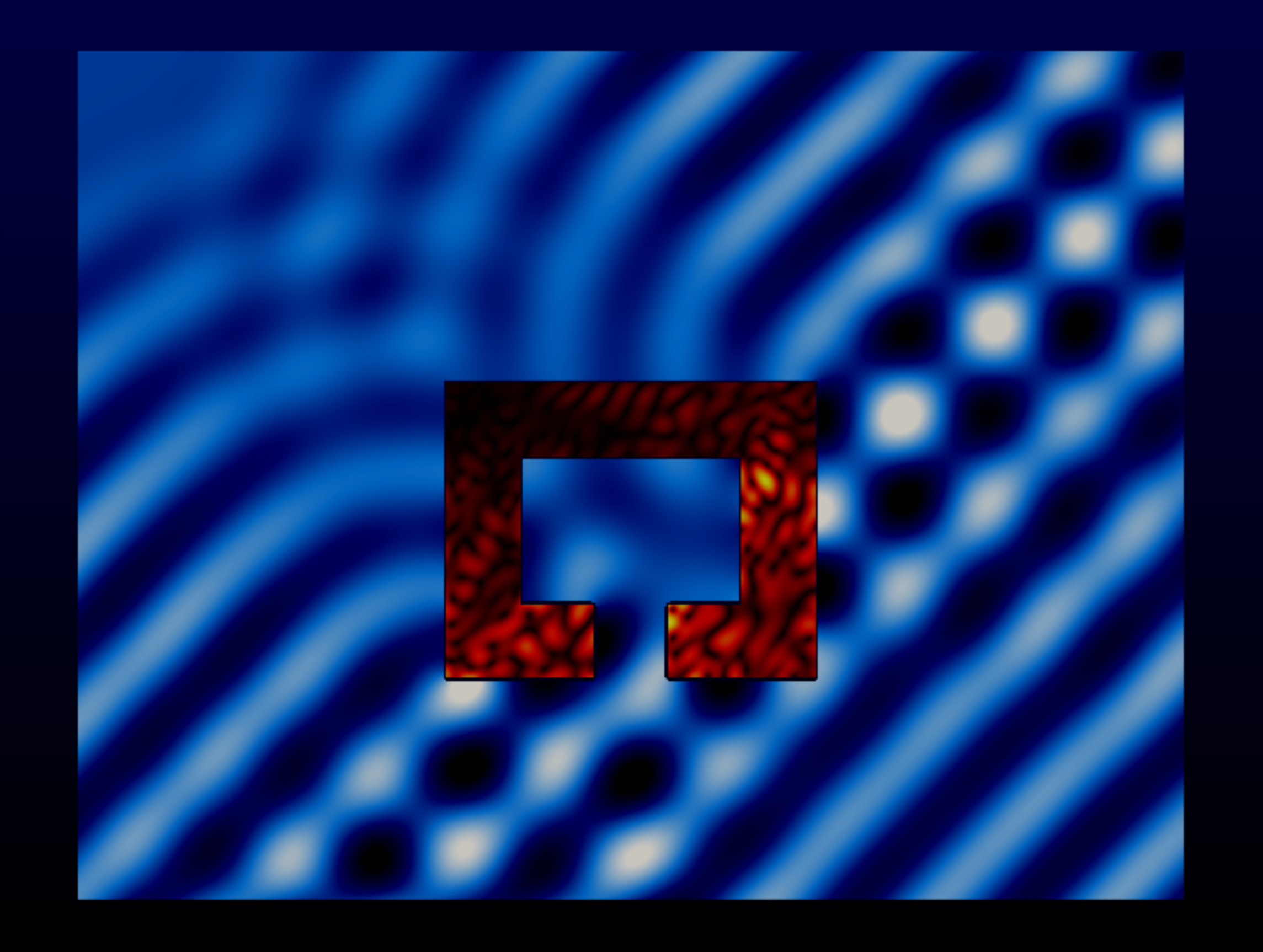} &
\includegraphics[width=.31\linewidth]{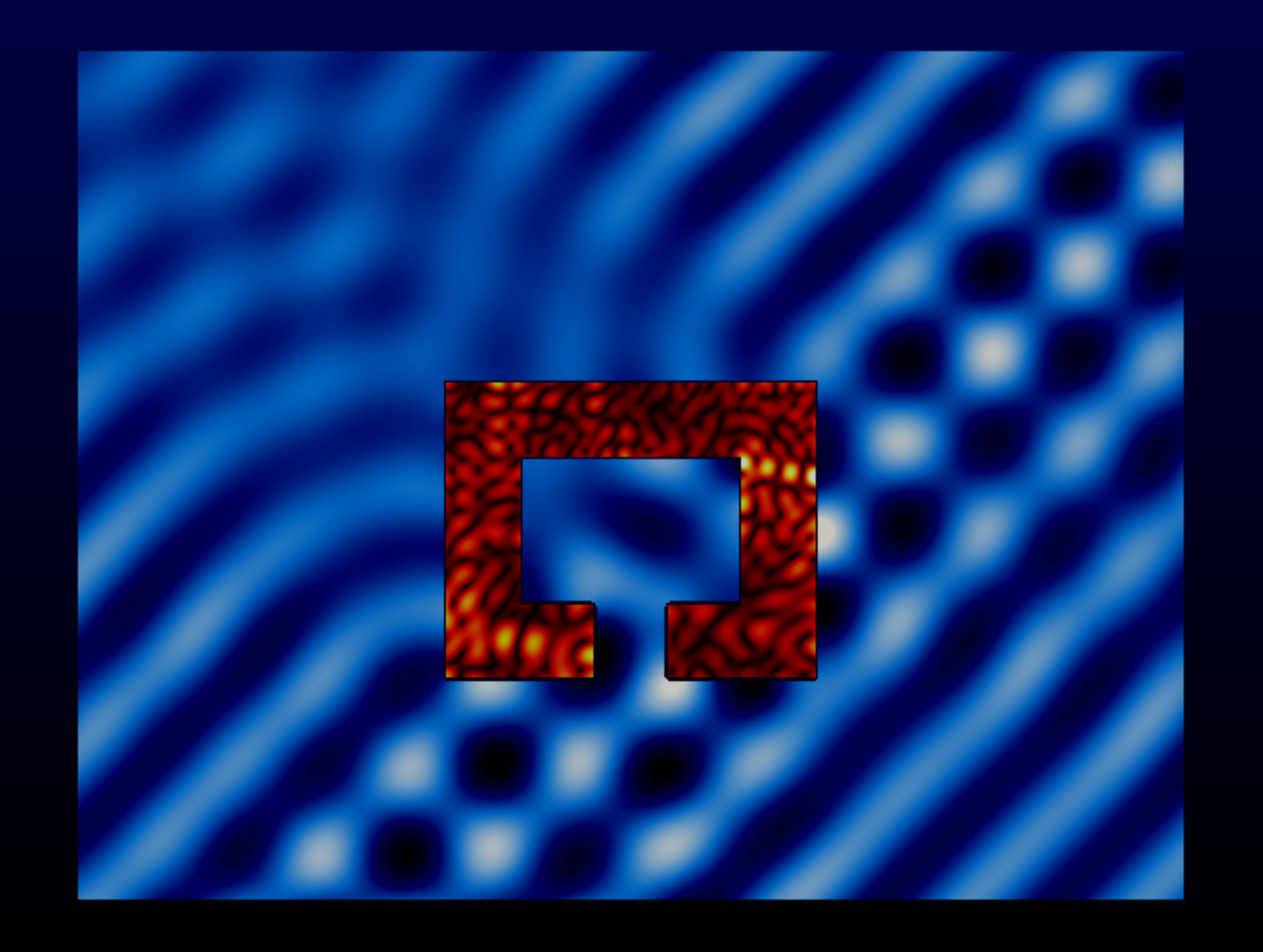}
\end{tabular}}
\caption{{\footnotesize Due to the periodic nature of the incident acoustic wave, the resulting interacting fields transition into a time-harmonic regime. We present snapshots of the total acoustic field and the magnitude of the elastic displacement for times $t=0.75, 1.5, 2.25, 3, 3.75, 7.5$.}}\label{fig:7.8}
\end{figure}

\paragraph{Generation of acoustic/elastic waves.} 
In this example we present the situation where there is no acoustic incident wave, but the grounding potential oscillates periodically in time. The alternating electric potential generates elastic stress that propagates in the interior of the solid and acts as a source of acoustic waves in the exterior domain through the interface coupling conditions. This situation is of practical relevance, since a piezoelectric patch can be used both as sensor of external vibrations and -when the grounding potential is tuned accordingly- as generator of waves that can neutralize the external field. 

In the example we use the pentagonal geometry of the first experiment depicted in figure \ref{fig:7.1a} with the same physical parameters but imposing the time-dependent grounding condition
\[
\psi = 6\mathcal H(t)\sin(4\pi t),
\]
imposed on the entire interface $\Gamma$. The space discretization was done using the same MATLAB generated triangulation with mesh parameter $h=1.72 \times 10^{-2}$ with 2992 elements and a matching Boundary Element mesh with 236 panels. The elastic and electric unknowns were approximated using $\mathcal P_4$ Lagrangian finite elements and the acoustic field used $\mathcal P_4/\mathcal P_3$ continuous/discontinuous Galerkin boundary elements. For time discretization BDF2 time stepping and BDF2-Based CQ were used for a uniform time grid with $\kappa= 1.2 \times 10^{-3}$ in the interval $t\in[0, 6]$. As a measure of the system's energy, the norms of the discrete elastic, electric and acoustic approximations are shown in figure \ref{fig:7.9}, while Figure \ref{fig:7.10} shows the time signatures of the generated acoustic wavefield in ten different locations in the fluid. Given the periodic nature of the forcing potential, the behavior of the system eventually becomes harmonic. Snapshots of the post-processed acoustic pressure and the elastic displacement for different times are shown in Figure \ref{fig:7.11}.

\begin{figure}\center{\begin{tabular}{ccc}
\includegraphics[width=.31\linewidth]{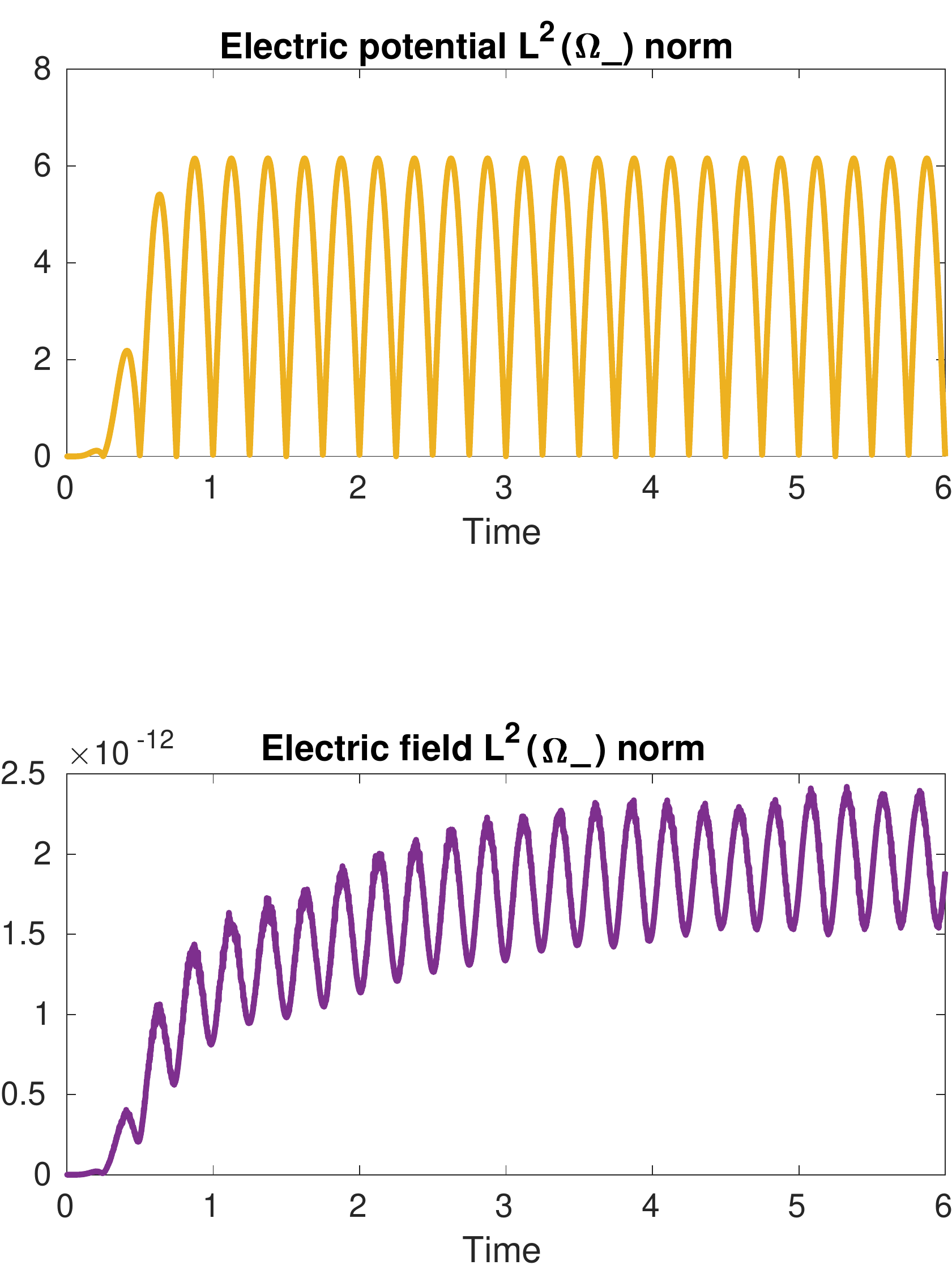} &
\includegraphics[width=.31\linewidth]{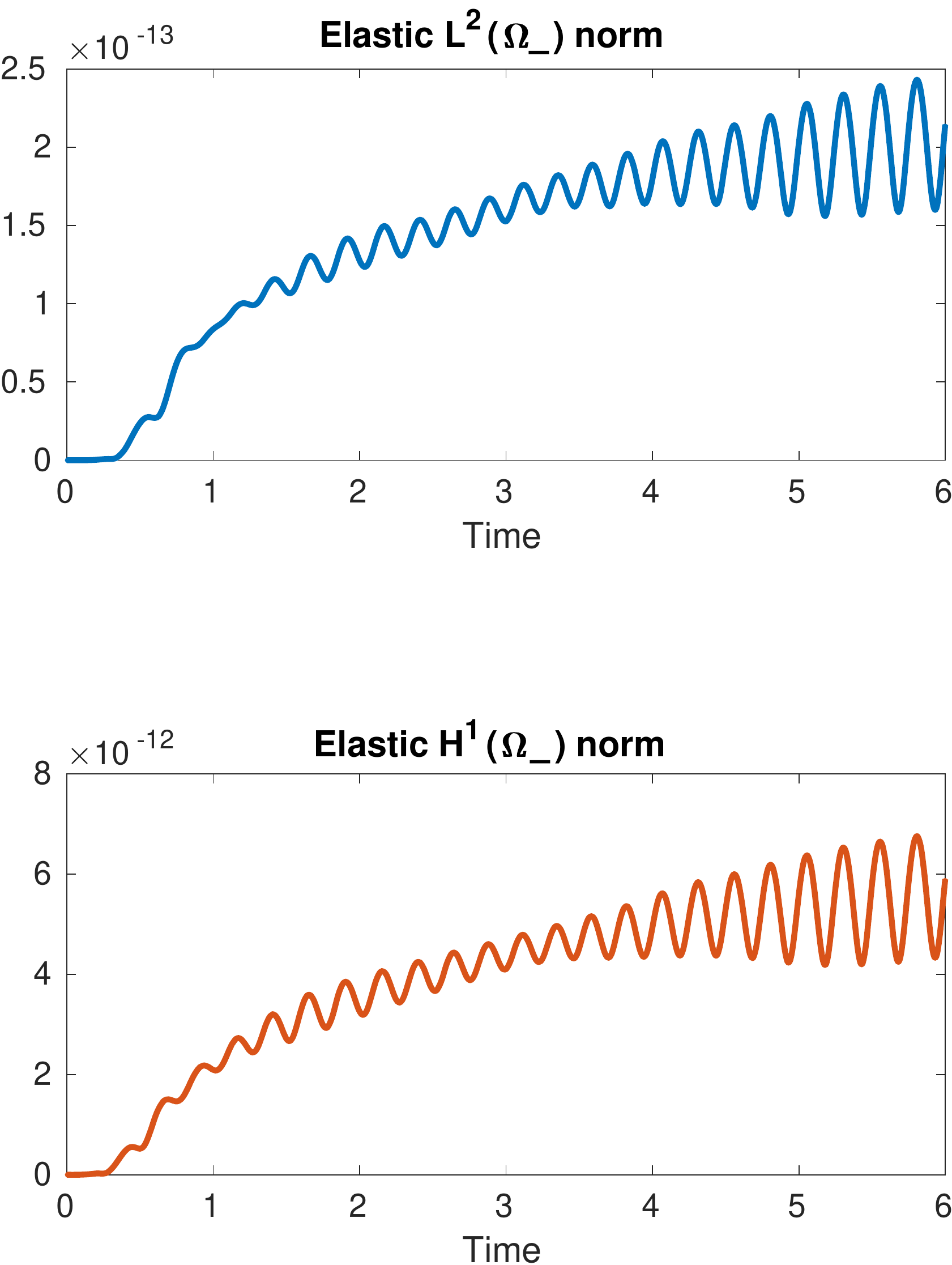}  &
\includegraphics[width=.31\linewidth]{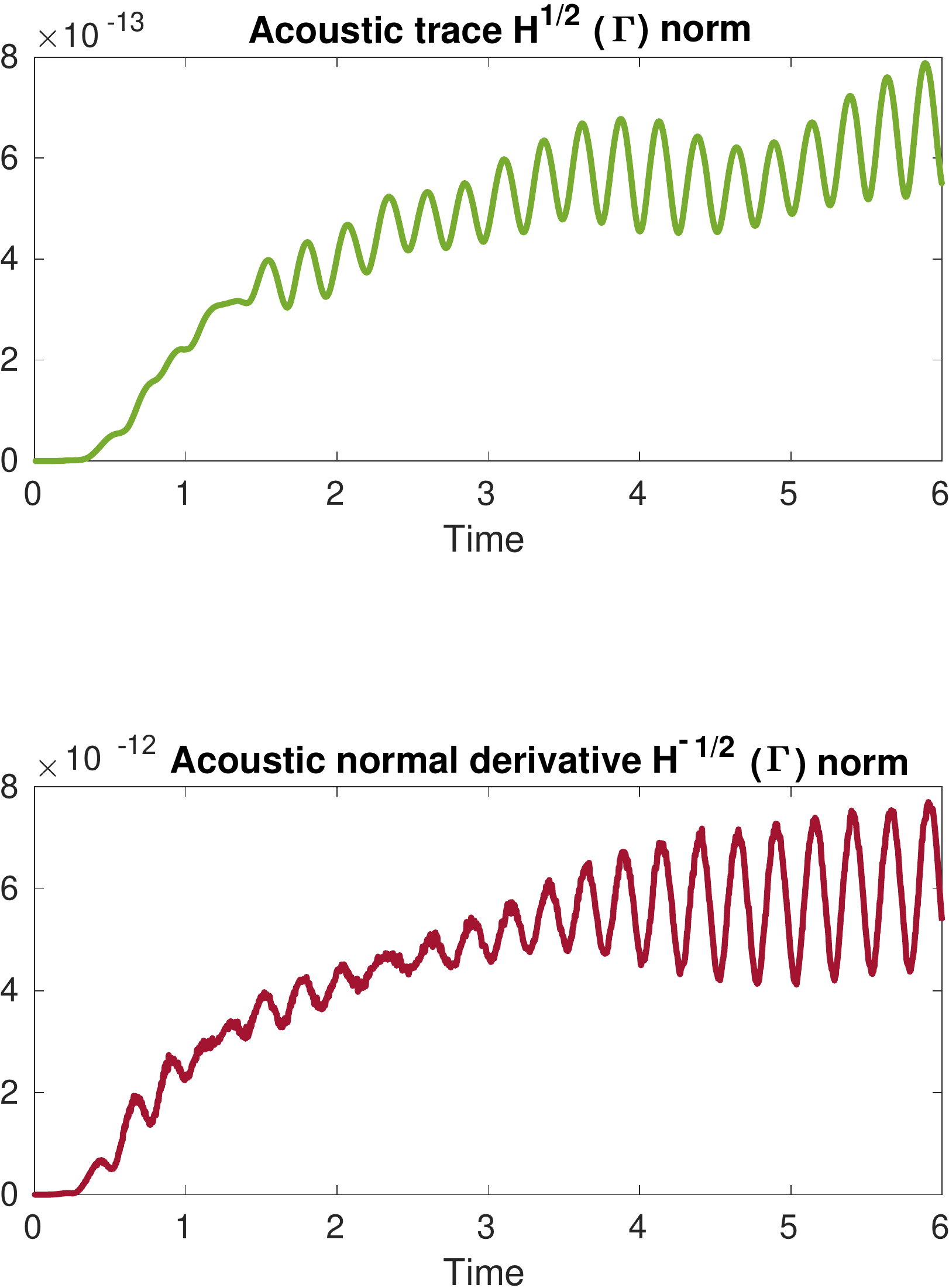}
\end{tabular}}
\caption{{\footnotesize Columnwise from left to right: norms of the electric potential, elastic displacement and acoustic densities as functions of time.}}\label{fig:7.9}
\end{figure}

\begin{figure}\center{\begin{tabular}{ccc}
\includegraphics[width=.31\linewidth]{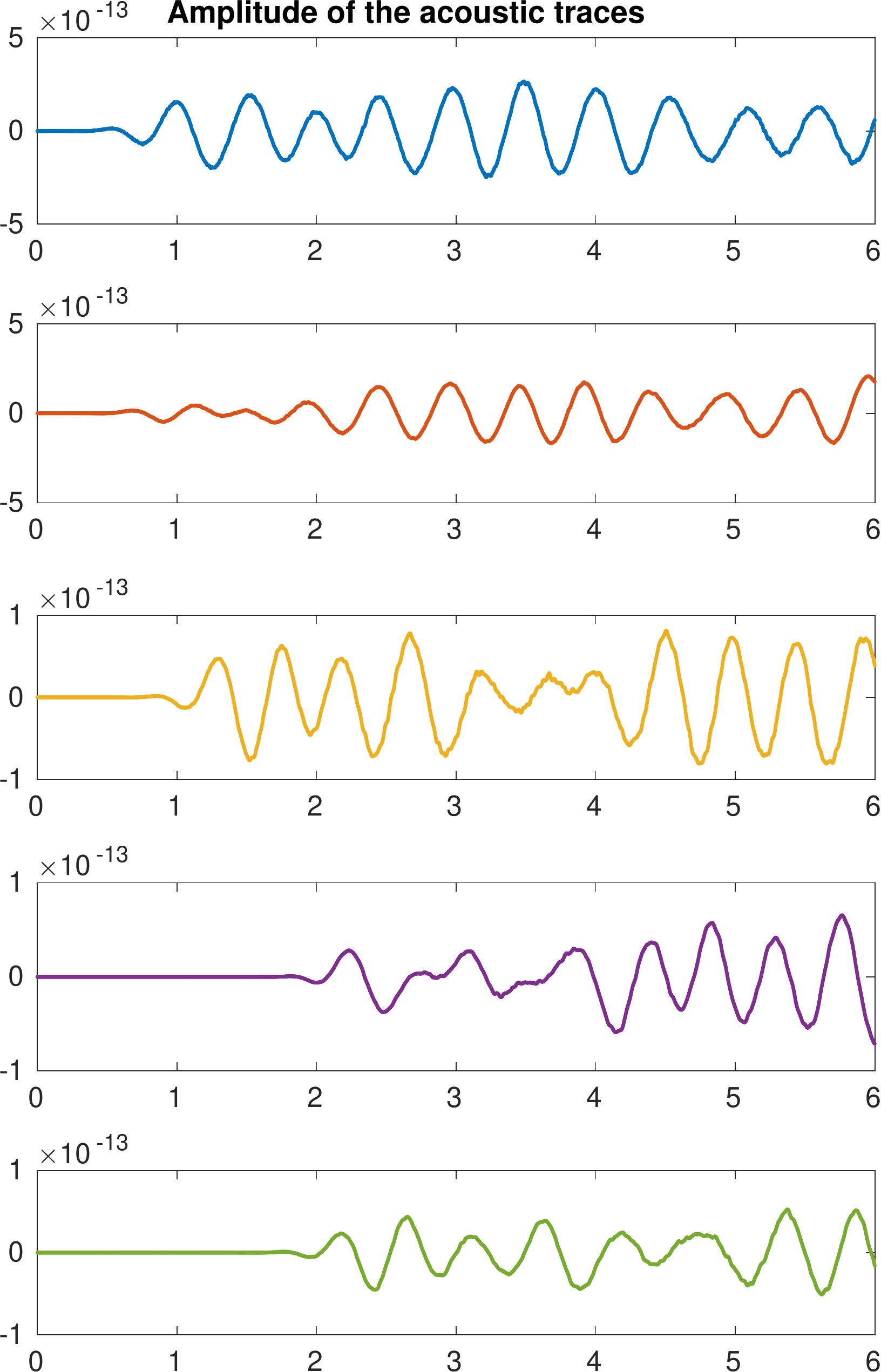} &
\includegraphics[width=.31\linewidth]{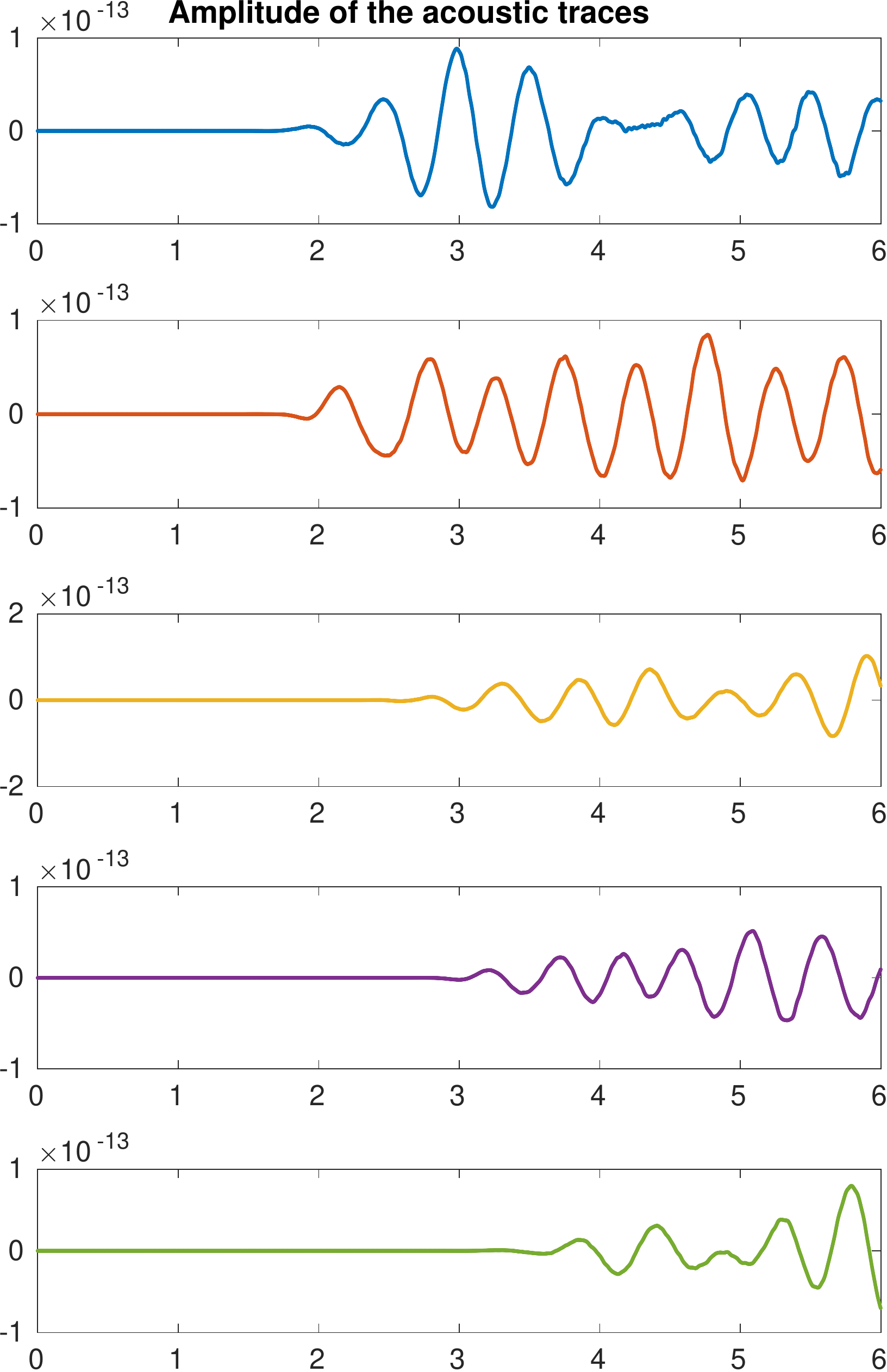} &
\includegraphics[width=.31\linewidth]{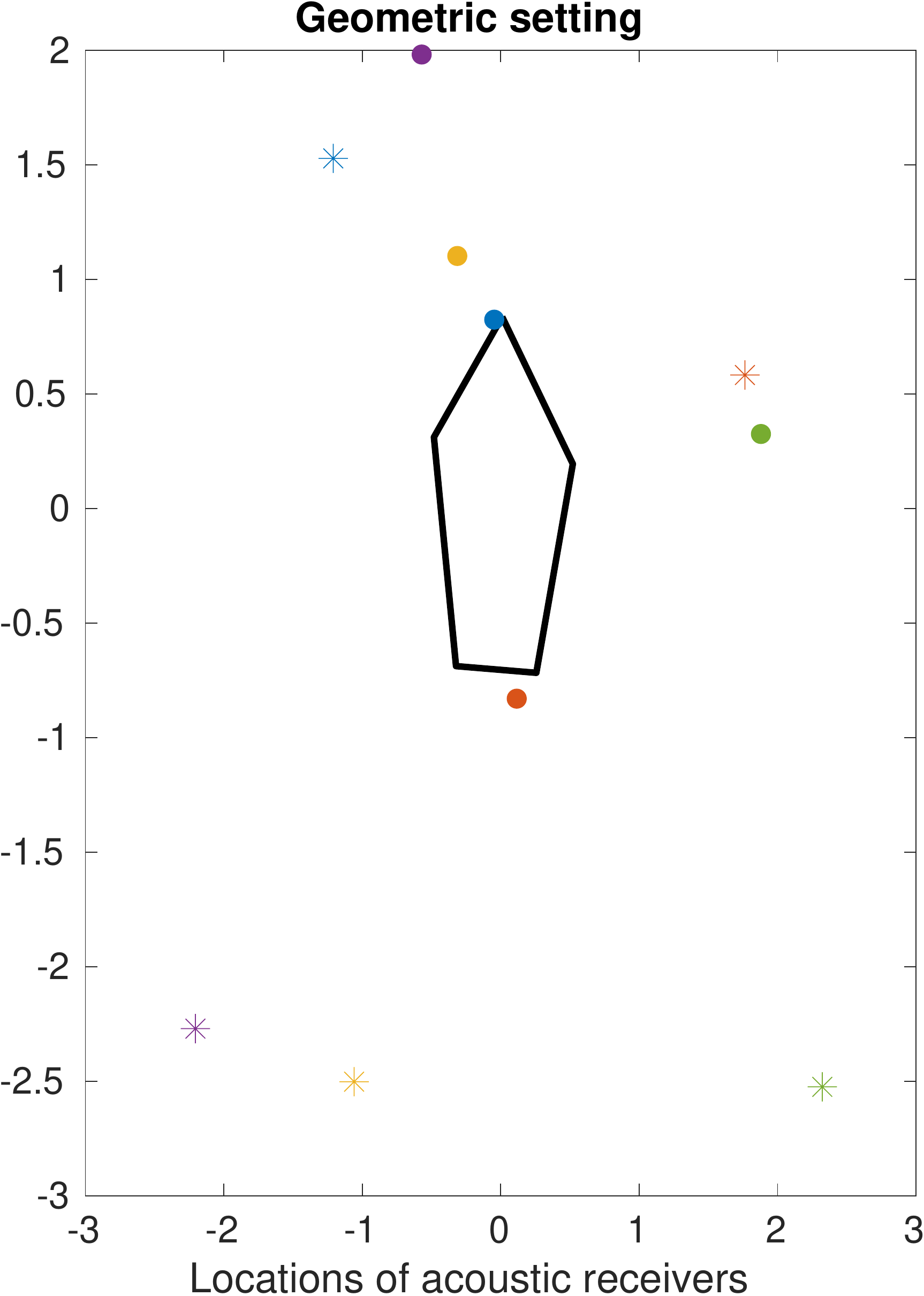}
\end{tabular}}
\caption{{\footnotesize The acoustic time-signatures on the left column correspond to receivers placed at the locations marked with a dot in the schematic, while the ones on the central column correspond to those marked with an asterisk. On the right: Locations of the acoustic receivers.}}\label{fig:7.10}
\end{figure}

\begin{figure}\center{\begin{tabular}{ccc}
\includegraphics[width=.31\linewidth]{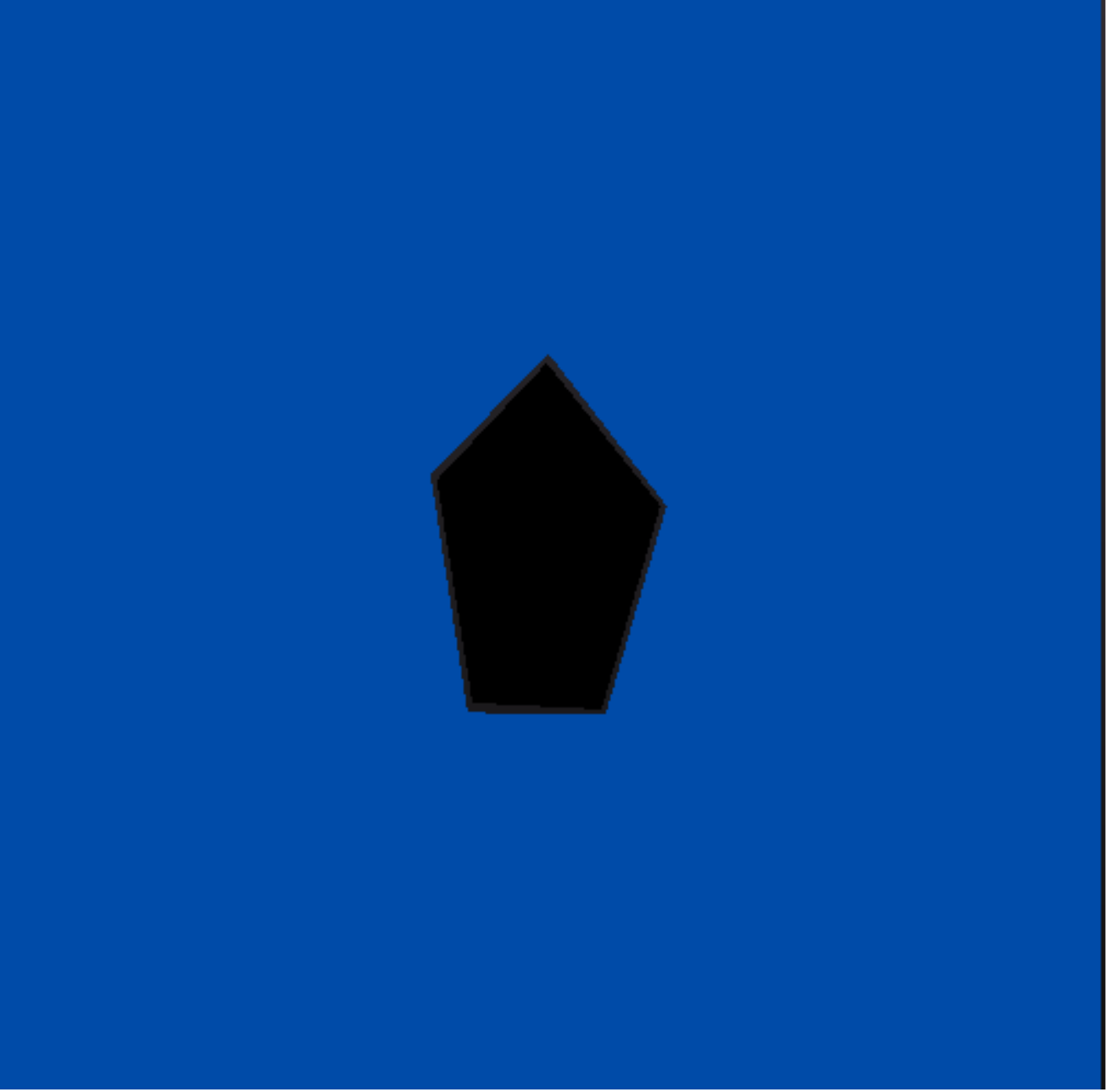}    &
\includegraphics[width=.31\linewidth]{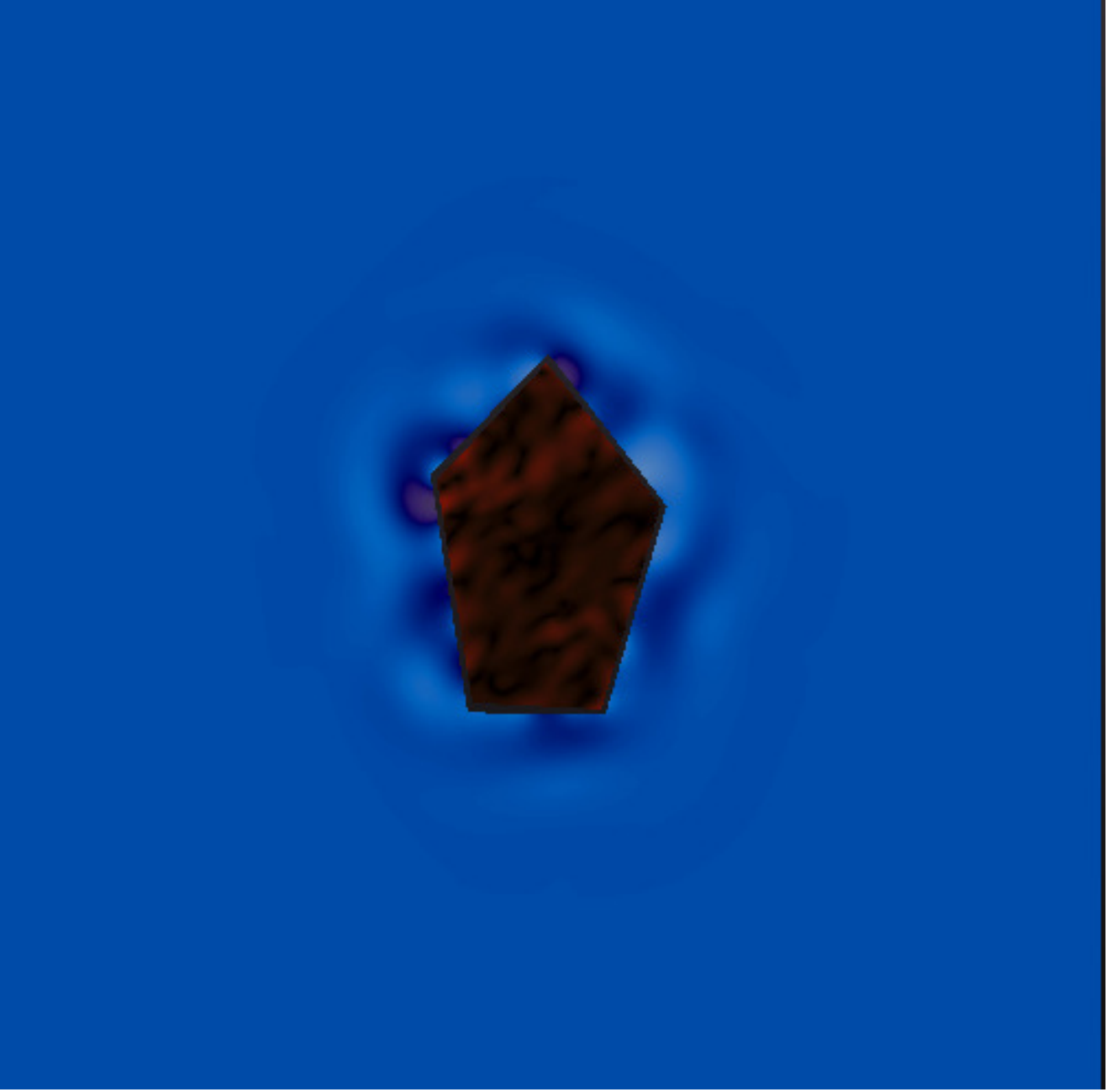}  &
\includegraphics[width=.31\linewidth]{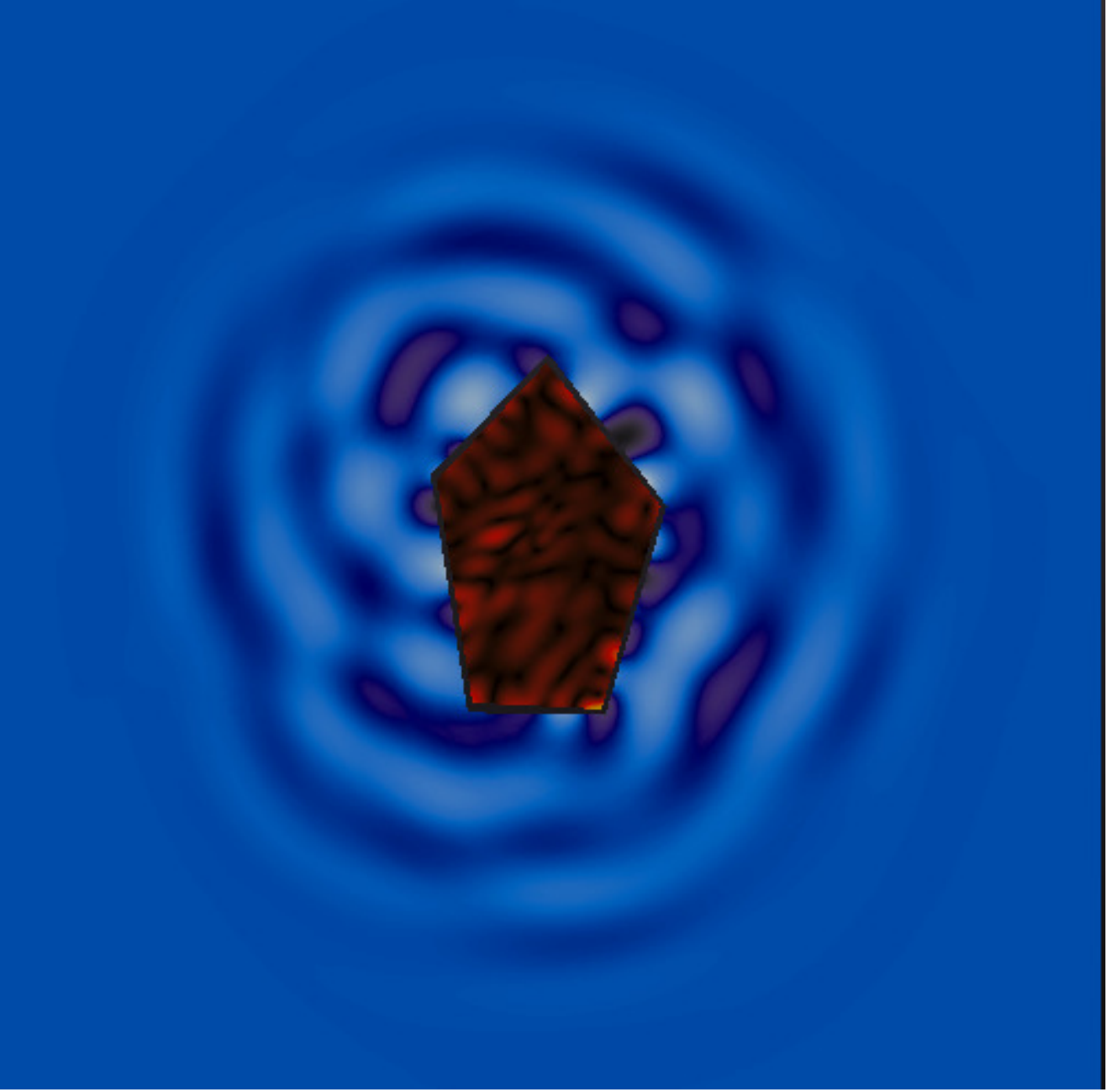} \\
\includegraphics[width=.31\linewidth]{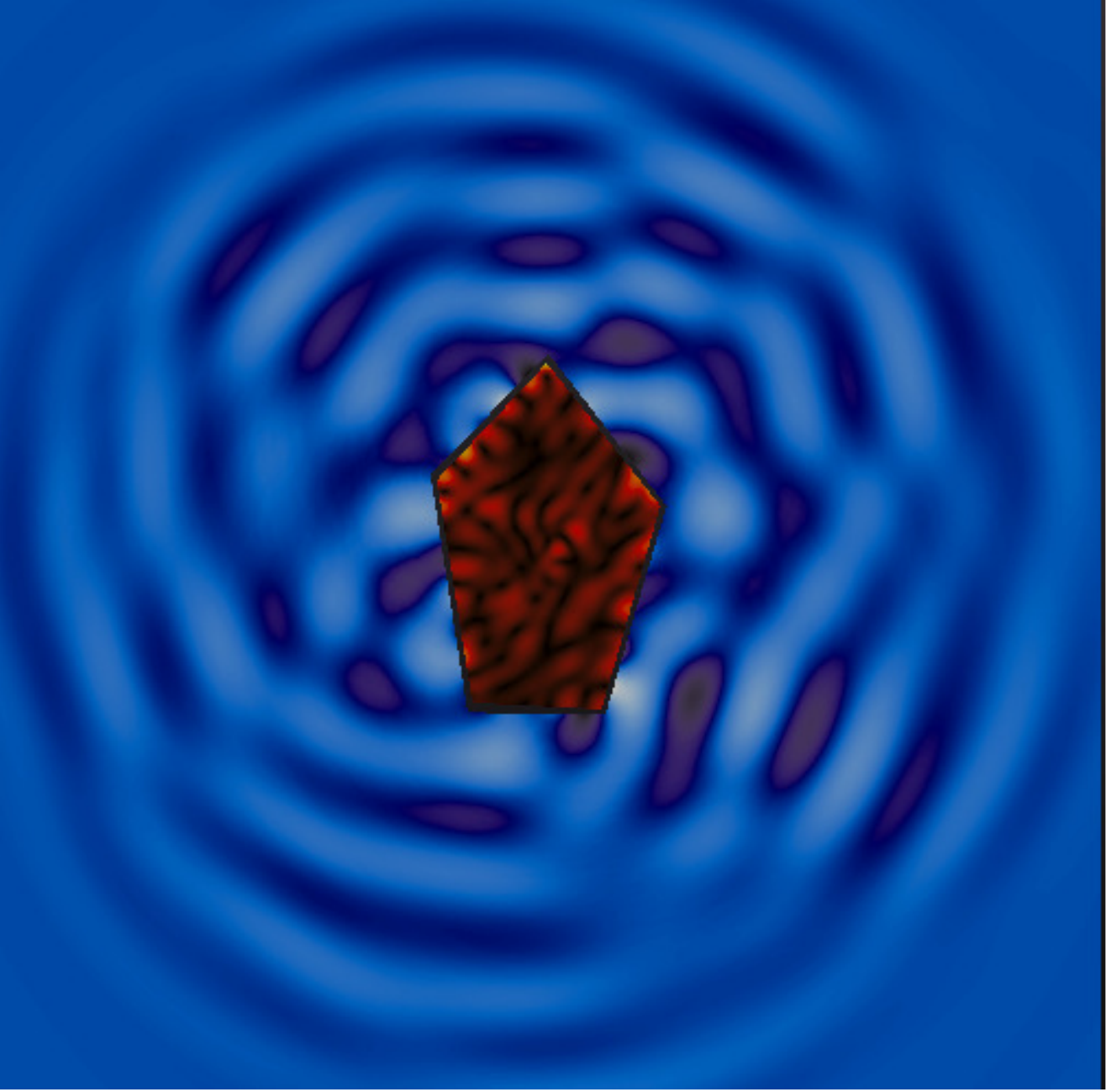} &
\includegraphics[width=.31\linewidth]{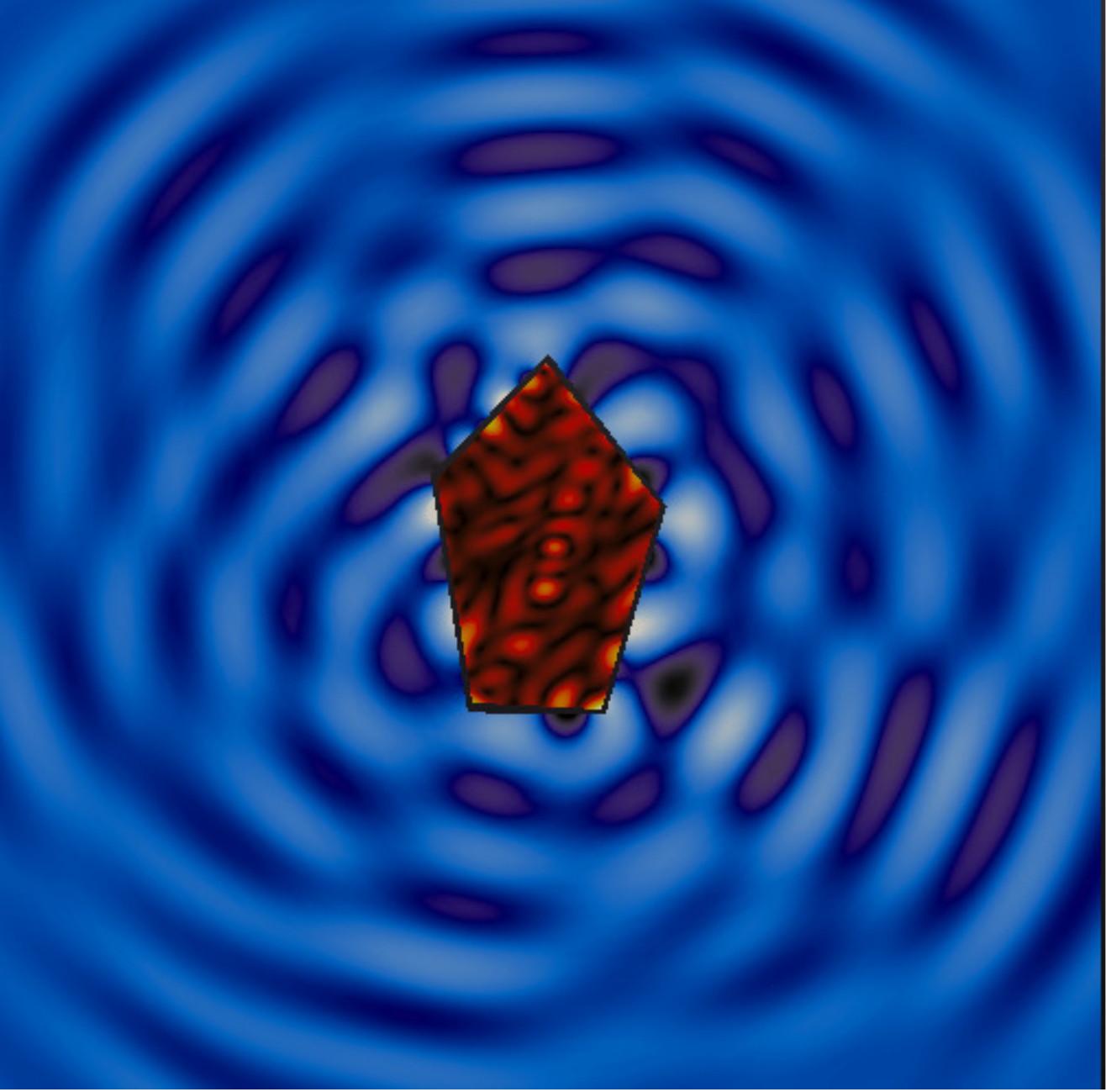} &
\includegraphics[width=.31\linewidth]{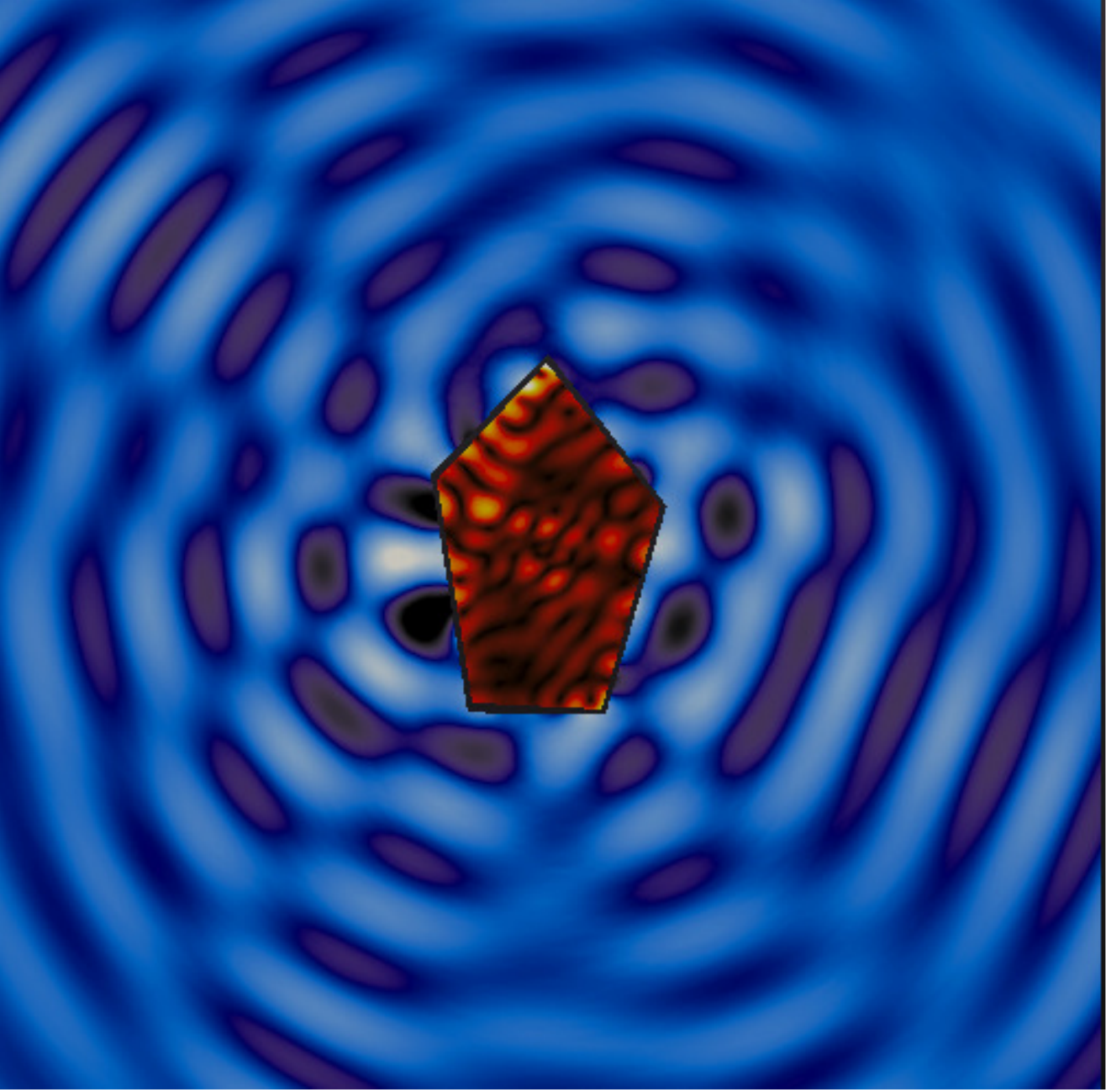}
\end{tabular}}
\caption{{\footnotesize A time-harmonic grounding potential generates internal elastic stresses that in turn produce an acoustic wavefield.  The figure presents snapshots of the total acoustic field and the magnitude of the elastic displacement for times $t=0, 0.9, 1.8, 2.7, 3.6, 6$.}}\label{fig:7.11}
\end{figure}

\paragraph{Concluding Remarks.}  
We have studied the problem of the scattering of acoustic waves by a piezoelectric solid, as well as its abstract semidiscretization in space. We have focused on the analysis in the time domain to obtain stability bounds (for both problems) and error estimates. The continuous problem, its semidiscretization, and the evolution of the approximation error are recast as an abstract differential equation of the first order.

The semidiscrete problem is presented as an abstract transmission problem, where the interior fields are posed in Galerkin form with exotic transmission conditions. However, we show it to be equivalent to a Galerkin approximation for a coupled formulation using retarded boundary integral operators and volume terms, that can be dealt with numerically as a coupled Finite Element-Boundary Element formulation.  

We have also presented some variations of the model and its semidiscretization: decoupling the electric potential from the problem by deactivating the piezoelectric tensor, adding a drag term to the elastic equation (thus losing energy conservation), or using exact equations in the interior domain with more exotic transmission conditions for coupling (thus having an equivalent system of semidiscrete integral equations modeling the entire problem).

Our approach improves previously known results from Laplace domain analysis.  Several numerical experiments have been presented in order to highlight some of the possibilities of our formulation.  Rather than convergence studies, we have opted to show that our method captures the physical attributes one might expect from solutions to these problems.  We again direct the reader who is interested in convergence studies to those that can be found in \cite{SaSa2016, HsSaSa2016}.

\section{Appendix: The Abstract Framework} \label{framework}

In this section we present the abstract framework which we follow to derive our results.  This framework is based on \cite{HaQiSaSa2015}, although it is slightly simplified in two ways: (a) the flipping operator $T$ allows us to avoid having to deal with two different signs for an associated elliptic problem (this is used to prove that $A$ and $-A$ are both maximal dissipative); (b) the hypothesis on the surjectivity of the boundary operator is hidden in a hypothesis on existence of solutions for an elliptic problem. We will use some concepts and results from the theory of contraction $C_0$-semigroups of operators in Hilbert spaces and their relation to abstract evolution equations \cite{Pazy1983}. 

We begin with the Hilbert spaces $\mathbb H, \mathbb V, \mathbb M_1$, and $\mathbb M_2$, with the property that $\mathbb V \subset \mathbb H$ with bounded injection.  On these spaces, we define  bounded linear operators $A_\star: \mathbb V \to \mathbb H$, $B: \mathbb V \to \mathbb M_2$, and $G: \mathbb M_1 \to \mathbb H$.  We assume that 
 there exist two positive constants, $C_1$ and $C_2$ such that
\[
C_1\|U\|_{\mathbb V} \leq \|A_\star U\|_{\mathbb H} + \|U\|_{\mathbb H} \leq C_2 \|U\|_{\mathbb V}, \qquad \forall U \in \mathbb V.
\]
We also define $A:= A_\star \big|_{\mathrm{Ker}\, B}$ and $D(A): = \mathrm{Ker}\, B$.  With these definitions, for each problem we need to verify the following hypotheses:
\begin{itemize}
\item For all $U \in D(A)$, $(AU, U)_{\mathbb H} = 0$.
\item There exists an isometric involution $T:\mathbb H \to \mathbb H$, which is a bijection in $D(A)$ and satisfies $TAU = -ATU$ for all $U \in D(A)$.
\item  For all data, which take the form of $F \in \mathbb H$, and $\Xi = (\xi, \chi) \in \mathbb M:= \mathbb M_1 \times \mathbb M_2$, the problem
\begin{equation} \label{eq:modelProb}
U = A_\star U + F + G\xi, \qquad BU = \chi,
\end{equation}
is solvable and there is a positive constant $C_\mathrm{lift}$, such that 
\[
\|U\|_{\mathbb V} \leq C_\mathrm{lift} \left(\|F\|_{\mathbb H} + \|\Xi\|_{\mathbb M} \right).
\]
\end{itemize}

Under these hypotheses, we have the following.  The operators $\pm A$ are dissipative by the homogeneity of $(AU, U)$.  This property, along with the linearity of all operators involved, also shows that if there is a solution to \eqref{eq:modelProb}, that solution is unique.  Since we take $\Xi \in \mathbb M$ to be arbitrary, we have the surjectivity of $B$.  If we take $\Xi = 0$ in \eqref{eq:modelProb}, then we see that $I-A$ is onto, hence $A$ is maximal dissipative.  Furthermore, with the existence of $T$ and the surjectivity of $I-A$, we prove the surjectivity of $I + A$, so that $-A$ is also maximal dissipative.  From this we can conclude that $A$ is the infinitesimal generator of a $C_0$-group of isometries in $\mathbb H$, corresponding to the evolution of initial conditions in the  problem
\[
\dot U(t)= A_\star U(t) \quad t\in \mathbb R, \qquad U(0)=U_0.
\]
The result deals mainly with strong solutions of a non-homogeneous initial value problem and it also recognizes their relation to causal distributional solutions of the same problem. This identification is needed to relate the problems under study to retarded potential integral formulations, where the vanishing past behavior (for negative times) of some of the fields is needed to make the definitions meaningful. Given a Banach space $X$, we consider the Sobolev spaces 
\begin{alignat*}{6}
W^k(X) &:= \{ f\in \mathcal C^{k-1}([0,\infty);X)\,:\, f^{(k)}\in L^1((0,\infty);X), \quad f^{(\ell)}(0)=0 \quad \ell\le k\},\\
W_+^k(X) & := \{f \in \mathcal C^{k-1}(\mathbb R, X): f^{(k)} \in L^1(\mathbb R; X),\quad f\equiv 0 \mbox{ in $(-\infty,0)$}\}.
\end{alignat*}

\begin{theorem} \label{bigTheorem}
\begin{enumerate}
\item[{\rm (a)}] If $F \in W^1(\mathbb H)$ and $\Xi = (\xi, \chi) \in W^2(\mathbb M)$, then there exists a unique $U \in \mathcal C^1([0, \infty), \mathbb H) \cap \mathcal C ([0, \infty), \mathbb V)$ which satisfies
\begin{subequations} \label{eq:modelProb2}
\begin{alignat}{3}
\dot{U}(t) & = A_\star U(t) + F(t) + G\xi(t), &\qquad &t \geq 0,\\
BU(t) &= \chi(t) &\qquad & t\geq 0,\\
U(0)&= 0,
\end{alignat}
\end{subequations}
and  the following estimates hold:
\begin{alignat*}{3}
\|U(t)\|_{\mathbb H} &\lesssim H_1(\Xi, t |\mathbb M) + \int_0^t \|F(\tau)\|_{\mathbb H} \; \mathrm d \tau,\\
\|\dot{U}(t)\|_{\mathbb H} &\lesssim H_2(\Xi, t |\mathbb M) + \int_0^t \|\dot{F}(\tau)\|_{\mathbb H} \; \mathrm d \tau.
\end{alignat*}
All constants hidden in the symbol $\lesssim$ depend exclusively on $C_{\mathrm{lift}}$. 
\item[{\rm (b)}] If $F \in W^2(\mathbb H)$ and $\Xi \in W^3(\mathbb M)$, then there exists a unique $U$ solving \eqref{eq:modelProb2}, with the additional bound
\[
\|\ddot{U}(t)\|_{\mathbb H} \lesssim H_3(\Xi, t |\mathbb M) + \int_0^t \|\ddot{F}(\tau)\|_{\mathbb H} \; \mathrm d \tau.
\]
\item[{\rm (c)}] If $F \in W_+^1(\mathbb H)$ and $\Xi \in W_+^2(\mathbb M)$ then the restriction of a distributional solution of 
\[
\dot{U} = A_\star U + F + G\xi, \qquad BU = \chi,
\]
to $[0, \infty)$ is the unique solution of $\eqref{eq:modelProb2}$.
\end{enumerate}

\end{theorem}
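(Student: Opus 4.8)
The plan is to reduce the boundary-value evolution problem to a homogeneous Cauchy problem for the group generated by $A$, and then to read off all three parts. Since the discussion preceding the statement already establishes that $A$ generates a $C_0$-group of isometries $\{e^{tA}\}_{t\in\mathbb R}$ on $\mathbb H$ and that solutions of \eqref{eq:modelProb2} are unique, I only need to produce the solution, upgrade its regularity, and prove the quantitative bounds.

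First I would \emph{lift only the boundary and $\mathbb M_1$ data}. For each fixed $t$, the third hypothesis applied with vanishing $F$ produces a unique $\Psi(t)\in\mathbb V$ with
\[
\Psi(t)=A_\star\Psi(t)+G\xi(t), \qquad B\Psi(t)=\chi(t), \qquad \|\Psi(t)\|_{\mathbb V}\le C_{\mathrm{lift}}\|\Xi(t)\|_{\mathbb M}.
\]
Because this solution map is linear and bounded and the operators do not depend on $t$, $\Psi$ inherits the time regularity of $\Xi$, and $\Psi(0)=0$ follows from $\Xi(0)=0$. Setting $W:=U-\Psi$ forces $BW\equiv 0$, i.e.\ $W(t)\in D(A)$, and a direct substitution (using $A_\star\Psi+G\xi=\Psi$ and $A_\star W=AW$ on $D(A)$) turns \eqref{eq:modelProb2} into
\[
\dot W(t)=AW(t)+\widetilde F(t), \qquad \widetilde F:=F+\Psi-\dot\Psi, \qquad W(0)=0.
\]
The decisive point is that $F$ is \emph{kept inside the forcing} rather than lifted; this is what eventually yields a bound involving only $\int_0^t\|F\|$ and not $\int_0^t\|\dot F\|$.

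Next I would solve the reduced problem with the group and bootstrap its regularity. The mild solution is $W(t)=\int_0^t e^{(t-s)A}\widetilde F(s)\,\mathrm ds$, which is meaningful because $\widetilde F\in W^1(\mathbb H)$. To see that it is a genuine (classical) solution I would write $\widetilde F(s)=\int_0^s\dot{\widetilde F}$ and interchange the order of integration to obtain $W(t)=\int_0^t S(t-s)\dot{\widetilde F}(s)\,\mathrm ds$, where $S(\tau):=\int_0^\tau e^{rA}\,\mathrm dr$ maps $\mathbb H$ into $D(A)$ with $A\,S(\tau)=e^{\tau A}-I$. Closedness of $A$ then gives $W(t)\in D(A)$ and $AW(t)=\int_0^t e^{(t-s)A}\dot{\widetilde F}(s)\,\mathrm ds-\widetilde F(t)$, which is continuous in $t$; hence $\dot W=AW+\widetilde F$ is continuous as well, so $W\in\mathcal C^1([0,\infty),\mathbb H)\cap\mathcal C([0,\infty),D(A))$. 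The equivalence $\|A_\star\,\cdot\|_{\mathbb H}+\|\cdot\|_{\mathbb H}\approx\|\cdot\|_{\mathbb V}$ then places $W$ in $\mathcal C([0,\infty),\mathbb V)$, and $U=\Psi+W$ has the regularity claimed in (a). The two estimates of (a) follow from the isometry $\|e^{(t-s)A}\|=1$: for $\|U(t)\|_{\mathbb H}$ I bound $\|W(t)\|_{\mathbb H}\le\int_0^t\|\widetilde F\|\lesssim\int_0^t\|F\|+\int_0^t(\|\Xi\|+\|\dot\Xi\|)$ and add $\|\Psi(t)\|_{\mathbb H}\lesssim\|\Xi(t)\|_{\mathbb M}\le\int_0^t\|\dot\Xi\|$, while for $\|\dot U(t)\|_{\mathbb H}$ I use $\dot W(t)=\int_0^t e^{(t-s)A}\dot{\widetilde F}(s)\,\mathrm ds$ together with $\dot{\widetilde F}=\dot F+\dot\Psi-\ddot\Psi$; recognizing the resulting time integrals as $H_1(\Xi,t|\mathbb M)$ and $H_2(\Xi,t|\mathbb M)$ gives the stated bounds, with constants depending only on $C_{\mathrm{lift}}$ and the embedding $\mathbb V\hookrightarrow\mathbb H$.

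Finally, parts (b) and (c). For (b) I would differentiate the problem: letting $U_1$ be the solution of \eqref{eq:modelProb2} furnished by (a) for the data $(\dot F,\dot\Xi)$ (which lie in $W^1(\mathbb H)\times W^2(\mathbb M)$ precisely under the hypotheses of (b)), I check that $\int_0^{\cdot}U_1$ solves the original problem---using $\dot U(0)=A_\star U(0)+F(0)+G\xi(0)=0$---so by uniqueness $\dot U=U_1$, and the $\|\dot U_1\|_{\mathbb H}$-estimate from (a) becomes the asserted bound on $\|\ddot U(t)\|_{\mathbb H}$. For (c), the zero-extension to $t<0$ of the solution from (a) is a causal distributional solution on $\mathbb R$ (no Dirac mass appears at $0$ because $U(0)=0$); conversely, the difference of two causal distributional solutions solves $\dot V=AV$, $BV=0$, and taking Laplace transforms gives $(sI-A)\widehat V(s)=0$ with $s\in\mathbb C_+\subset\rho(A)$---the half-plane lying in the resolvent set because $A$ generates a group of isometries---so $\widehat V\equiv 0$ and $V\equiv 0$. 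Thus any distributional solution coincides with that zero-extension, whose restriction to $[0,\infty)$ is the solution of (a). I expect the regularity bootstrap in the third paragraph---extracting a $\mathcal C^1$-in-$\mathbb H$, $\mathcal C^0$-in-$\mathbb V$ solution from forcing with only an $L^1$ time derivative, while arranging the lifting so that $F$ is never differentiated---to be the main obstacle; the estimates and parts (b)--(c) are then comparatively routine.
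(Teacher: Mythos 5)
The paper itself does not prove \Cref{bigTheorem}: the appendix only verifies that $\pm A$ are maximal dissipative (hence that $A$ generates a $C_0$-group of isometries) and then imports the theorem from the cited framework of \cite{HaQiSaSa2015}. Your argument is correct and is essentially the intended proof that the paper leaves implicit: the time-independent steady-state lifting $\Psi(t)$ of the $(\xi,\chi)$ data (third hypothesis with $F=0$), the reduction $W=U-\Psi$ to a Cauchy problem driven by the group, Duhamel's formula rewritten through $S(\tau)=\int_0^\tau e^{rA}\,\mathrm dr$ to extract classical regularity from forcing with only an $L^1$ time derivative, integration of the differentiated problem plus uniqueness for part (b), and causality for part (c). In particular, keeping $F$ inside the forcing instead of lifting it is exactly the device that produces $\int_0^t\|F\|_{\mathbb H}$ rather than $\int_0^t\|\dot F\|_{\mathbb H}$, so your bounds reproduce the stated ones with the right $H_k$ weights on $\Xi$.

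Two small caveats, neither fatal. First, in (c) your Laplace-transform uniqueness step tacitly assumes the causal distributional solution is Laplace transformable; that is built into the class of causal distributions used in this literature, but if one wants uniqueness for an arbitrary causal $\mathbb V$-valued distributional solution it is cleaner to mollify in time: for $\theta\in\mathcal D(\mathbb R)$, $V_\theta:=V\ast\theta$ is a smooth $D(A)$-valued solution of $\dot V_\theta=AV_\theta$ vanishing for $t$ sufficiently negative, so $\tfrac{\mathrm d}{\mathrm dt}\|V_\theta(t)\|_{\mathbb H}^2=2(AV_\theta(t),V_\theta(t))_{\mathbb H}=0$ forces $V_\theta\equiv 0$, hence $V=0$. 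Second, as you note, your hidden constants also involve the embedding constant of $\mathbb V\hookrightarrow\mathbb H$; the theorem's claim that they depend exclusively on $C_{\mathrm{lift}}$ relies on the normalization $\|\cdot\|_{\mathbb H}\le\|\cdot\|_{\mathbb V}$, which holds in every instantiation of the framework in this paper (where $\|U\|_{\mathbb V}^2=\|U\|_{\mathbb H}^2+\|A_\star U\|_{\mathbb H}^2$).
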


\paragraph{Remark.}  The above result still holds if $(AU,U) \leq 0$ for each $U \in D(A)$. In that case the operator $T$ does not exist and $A$ is a maximal dissipative infinitesimal generator of a \textit{contraction} $C_0$-semigroup of operators in $\mathbb H$. 

\bibliographystyle{abbrv} 
\bibliography{Refer}

\begin{thebibliography}{10}

\bibitem{AdFo2003}
R.~A. Adams and J.~J.~F. Fournier.
\newblock {\em Sobolev spaces}, volume 140 of {\em Pure and Applied Mathematics
  (Amsterdam)}.
\newblock Elsevier/Academic Press, Amsterdam, second edition, 2003.

\bibitem{AkNa2002}
M.~Akamatsu and G.~Nakamura.
\newblock Well-posedness of initial-boundary value problems for piezoelectric
  equations.
\newblock {\em Appl. Anal.}, 81(1):129--141, 2002.

\bibitem{BaHa1986}
A.~Bamberger and T.~H. Duong.
\newblock Formulation variationnelle pour le calcul de la diffraction d'une
  onde acoustique par une surface rigide.
\newblock {\em Math. Methods Appl. Sci.}, 8(4):598--608, 1986.

\bibitem{BaSa2012}
L.~Banjai and M.~Schanz.
\newblock Wave propagation problems treated with convolution quadrature and
  {BEM}.
\newblock In {\em Fast boundary element methods in engineering and industrial
  applications}, volume~63 of {\em Lect. Notes Appl. Comput. Mech.}, pages
  145--184. Springer, Heidelberg, 2012.

\bibitem{BiMa1991}
J.~Bielak and R.~C. MacCamy.
\newblock Symmetric finite element and boundary integral coupling methods for
  fluid-solid interaction.
\newblock {\em Quart. Appl. Math.}, 49(1):107--119, 1991.

\bibitem{Braess2007}
D.~Braess.
\newblock {\em Finite elements}.
\newblock Cambridge University Press, Cambridge, third edition, 2007.
\newblock Theory, fast solvers, and applications in elasticity theory,
  Translated from the German by Larry L. Schumaker.

\bibitem{ChNa2015}
G.~Chkadua and D.~Natroshvili.
\newblock Interaction of acoustic waves and piezoelectric structures.
\newblock {\em Math. Methods Appl. Sci.}, 38(11):2149--2170, 2015.

\bibitem{Cimatti2004}
G.~Cimatti.
\newblock The piezoelectric continuum.
\newblock {\em Ann. Mat. Pura Appl. (4)}, 183(4):495--514, 2004.

\bibitem{DeLaOh2009}
J.-F. De{\"u}, W.~Larbi, and R.~Ohayon.
\newblock Variational formulations of interior structural-acoustic vibration
  problems.
\newblock In G.~Sandberg and R.~Ohayon, editors, {\em Computational Aspects of
  Structural Acoustics and Vibration}, pages 1--21. Springer Vienna, Vienna,
  2009.

\bibitem{DoSa2003}
V.~Dom{\'{\i}}nguez and F.-J. Sayas.
\newblock Stability of discrete liftings.
\newblock {\em C. R. Math. Acad. Sci. Paris}, 337(12):805--808, 2003.

\bibitem{FlKaTrWo2010}
B.~Flemisch, M.~Kaltenbacher, S.~Triebenbacher, and B.~I. Wohlmuth.
\newblock The equivalence of standard and mixed finite element methods in
  applications to elasto-acoustic interaction.
\newblock {\em SIAM Journal on Scientific Computing}, 32(4):1980--2006, 2010.

\bibitem{HaSa2016}
M.~Hassell and F.-J. Sayas.
\newblock Convolution quadrature for wave simulations.
\newblock In {\em Numerical simulation in physics and engineering}, volume~9 of
  {\em SEMA SIMAI Springer Ser.}, pages 71--159. Springer, [Cham], 2016.

\bibitem{HaQiSaSa2015}
M.~E. Hassell, T.~Qiu, T.~S\'anchez-Vizuet, and F.-J. Sayas.
\newblock A new and improved analysis of the time domain boundary integral
  operators for the acoustic wave equation.
\newblock {\em J. Integral Equations Appl.}, 29(1):107--136, 2017.

\bibitem{HaSa2016b}
M.~E. Hassell and F.-J. Sayas.
\newblock A fully discrete {BEM}--{FEM} scheme for transient acoustic waves.
\newblock {\em Comput. Methods Appl. Mech. Engrg.}, 309:106--130, 2016.

\bibitem{HsSaSa2016}
G.~C. Hsiao, T.~S\'anchez-Vizuet, and F.-J. Sayas.
\newblock Boundary and coupled boundary--finite element methods for transient
  wave--structure interaction.
\newblock {\em IMA J. Numer. Anal.}, 37(1):237--265, 2017.

\bibitem{HsSaWe2015}
G.~C. Hsiao, F.-J. Sayas, and R.~J. Weinacht.
\newblock Time-dependent fluid-structure interaction.
\newblock {\em Math. Methods Appl. Sci.}, 40(2):486--500, 2017.

\bibitem{ImJo2012}
S.~Imperiale and P.~Joly.
\newblock Mathematical and numerical modelling of piezoelectric sensors.
\newblock {\em ESAIM Math. Model. Numer. Anal.}, 46(4):875--909, 2012.

\bibitem{LaSa2009}
A.~R. Laliena and F.-J. Sayas.
\newblock Theoretical aspects of the application of convolution quadrature to
  scattering of acoustic waves.
\newblock {\em Numer. Math.}, 112(4):637--678, 2009.

\bibitem{Lubich1994}
C.~Lubich.
\newblock On the multistep time discretization of linear initial-boundary value
  problems and their boundary integral equations.
\newblock {\em Numer. Math.}, 67(3):365--389, 1994.

\bibitem{McLean2000}
W.~McLean.
\newblock {\em Strongly elliptic systems and boundary integral equations}.
\newblock Cambridge University Press, Cambridge, 2000.

\bibitem{Pazy1983}
A.~Pazy.
\newblock {\em Semigroups of linear operators and applications to partial
  differential equations}, volume~44 of {\em Applied Mathematical Sciences}.
\newblock Springer-Verlag, New York, 1983.

\bibitem{SaSa2016}
T.~S\'anchez-Vizuet and F.-J. Sayas.
\newblock Symmetric boundary-finite element discretization of time dependent
  acoustic scattering by elastic obstacles with piezoelectric behavior.
\newblock {\em J. Sci. Comput.}, 70(3):1290--1315, 2017.

\bibitem{Sayas2007}
F.-J. Sayas.
\newblock Infimum-supremum.
\newblock {\em Bol. Soc. Esp. Mat. Apl. S$\vec{\rm e}$MA}, (41):19--40, 2007.

\bibitem{Sayas2016}
F.-J. Sayas.
\newblock {\em Retarded potentials and time domain integral equations: a
  roadmap}, volume~50 of {\em Springer Series in Computational Mathematics}.
\newblock Springer International Publishing, first edition, 2016.

\bibitem{Schwartz1966}
L.~Schwartz.
\newblock {\em Th\'eorie des distributions}.
\newblock Publications de l'Institut de Math\'ematique de l'Universit\'e de
  Strasbourg, No. IX-X. Nouvelle \'edition, enti\'erement corrig\'ee, refondue
  et augment\'ee. Hermann, Paris, 1966.

\bibitem{TaCh2013}
J.-Q. Tarn and H.-H. Chang.
\newblock A {H}amiltonian state space approach to anisotropic elasticity and
  piezoelasticity.
\newblock {\em Acta Mech.}, 224(6):1271--1284, 2013.

\end{thebibliography}

\end{document}